 \let\mathscr\relax
\def\MT@register@subst@font{\MT@exp@one@n\MT@in@clist\font@name\MT@font@list
 \ifMT@inlist@\else\xdef\MT@font@list{\MT@font@list\font@name,}\fi}
\newcommand{\bit}{\begin{itemize}}    
\newcommand{\eit}{\end{itemize}}
\newcommand{\ben}{\begin{enumerate}}
\newcommand{\een}{\end{enumerate}}
\newcommand{\benroman}{\ben[\normalfont (i)]}  
\let\eroman\een
\newcommand{\bde}{\begin{description}}
\newcommand{\ede}{\end{description}}
\let\oper=\mathbb                               
\newcommand{\III}{\oper{I}}                     
\newcommand{\SSS}{\oper{S}}
\newcommand{\VVV}{\oper{V}}                     
\newcommand{\QQQ}{\oper{Q}}
\newcommand{\HHH}{\oper{H}}
\newcommand{\PPP}{\oper{P}}
\newcommand{\PPU}{\oper{P}_{\!\textsc{\textup{u}}}^{}}
\theoremstyle{theorem}
\newtheorem{Theorem}{Theorem}[section]
\newtheorem{Theorem-n}{Theorem}
\newtheorem{Embedding Lemma}[Theorem]{Embedding Lemma}
\newtheorem{Trichotomy Theorem}[Theorem]{Antidichotomy Theorem}
\newtheorem{Correspondence Theorem}[Theorem]{Correspondence Theorem}
\newtheorem{Subdirect Decomposition Theorem}[Theorem]{Subdirect Decomposition Theorem}
\newtheorem{Proposition}[Theorem]{Proposition}
\newtheorem{Lemma}[Theorem]{Lemma}
\newtheorem{Corollary}[Theorem]{Corollary}
\theoremstyle{definition}
\newtheorem{Definition}[Theorem]{Definition}
\newtheorem{exa}[Theorem]{Example}
\theoremstyle{remark}
\newtheorem{Remark}[Theorem]{Remark}
\let\leq=\leqslant
\let\geq=\geqslant 
\bmdefine{\A}{A} 
\bmdefine{\B}{B}
\bmdefine{\C}{C}
\bmdefine{\D}{D}
\subjclass[2020]{18A20, 08C15, 08B10, 08B26, 06D20}
\keywords{Epimorphism surjectivity, quasivariety, variety, near unanimity term, congruence distributive, congruence permutable, filtral variety, discriminator variety}
\begin{document}

\title[Epimorphisms between finitely generated algebras]{Epimorphisms between finitely generated algebras}

\author{Luca Carai, Miriam Kurtzhals, and Tommaso Moraschini}

\address{Luca Carai: Dipartimento di Matematica ``Federigo Enriques'', Universit\`a degli Studi di Milano, via Cesare Saldini 50, 20133 Milan, Italy}\email{luca.carai.uni@gmail.com}

\address{Miriam Kurtzhals and Tommaso Moraschini: Departament de Filosofia, Facultat de Filosofia, Universitat de Barcelona (UB), Carrer Montalegre, $6$, $08001$ Barcelona, Spain}
\email{mkurtzku7@alumnes.ub.edu \textnormal{and} tommaso.moraschini@ub.edu}
\email{ }

\date{\today}

\begin{abstract}
A quasivariety has the \emph{weak ES property} when the epimorphisms between its finitely generated members are surjective. A characterization of quasivarieties with the weak ES property is obtained and a method for detecting failures of this property in quasivarieties with a near unanimity term and in congruence permutable varieties is given. It is also shown that under reasonable assumptions the weak ES property implies arithmeticity. In particular, every filtral variety with the weak ES property is a discriminator variety.
\end{abstract}

\maketitle

\section{Introduction}

Let $\mathsf{K}$ be a class of algebras. A homomorphism $f \colon \A \to \B$ with $\A, \B \in \mathsf{K}$ is said to be a $\mathsf{K}$-\emph{epimorphism} when it is right cancellable, that is, when for every pair of homomorphisms $g, h \colon \B \to \C$ with $\C \in \mathsf{K}$,
\[
g \circ f = h \circ f\, \, \text{ implies }\, \, g=h.
\]
While every surjective homomorphism between members of $\mathsf{K}$ is a $\mathsf{K}$-epimorphism, the converse need not hold in general. For instance, the inclusion map of the integers into the rationals is a nonsurjective epimorphism in the variety of rings (see, e.g., \cite{Isbell66epi}). 

The demand that all $\mathsf{K}$-epimorphisms be surjective has been studied extensively (see \cite{KMPT83} and the references therein). In this paper, we focus on the strictly weaker demand that all $\mathsf{K}$-epimorphisms between \emph{finitely generated} members of $\mathsf{K}$ be surjective, in which case $\mathsf{K}$ is said to have the  \emph{weak epimorphism surjectivity property} (weak ES property, for short) \cite[p.\ 259]{HMTII}. Furthermore, we restrict our attention to the case where $\mathsf{K}$ is a \emph{variety} or a \emph{quasivariety}, that is, a class of algebras axiomatized by equations or by quasiequations
\cite{BuSa00,Go98a}.

The interest of the weak ES property is twofold. On the one hand, it is often the case that quasivarieties in which epimorphisms need not be surjective lack the weak ES property as well.\ For instance, the inclusion map of $\oper{Z}$ into $\oper{Z}[1/ p]$ for a prime $p$ is a nonsurjective epimorphism between finitely generated rings, whence the variety of rings lacks the weak ES property. More in general, the ES property and its weak version coincide for quasivarieties with the amalgamation property (see, e.g., \cite[Thm.\ 1.3]{BMR16}). On the other hand, from a logical standpoint, the weak ES property is the algebraic counterpart of the \emph{(finite) Beth definability property}, i.e., the demand that all implicit definitions can be made explicit (see \cite[Thm.\ 5.6.10]{HMTII} and \cite[Thm.\ 3.14]{BlHoo06}). 
In essence, the latter means that if an element of a structure satisfying some property is unique when it exists, then it is definable by a term of the language.
For instance, the failure of the weak ES property in rings amounts to the fact that multiplicative inverses are implicitly, but not explicitly, definable.  

In general, the task of determining whether a quasivariety $\mathsf{K}$ has the weak ES property is nontrivial, in part because of the difficulty of checking which homomorphisms between \emph{arbitrary} finitely generated members of $\mathsf{K}$ are $\mathsf{K}$-epimorphisms. Our main results simplify this task in different ways. 

On the one hand, we provide a characterization of the quasivarieties with the weak ES property (Theorem \ref{Thm : weak ES vs fully epic}) and apply it to show that, under reasonable assumptions, a quasivariety $\mathsf{K}$ has the weak ES property provided that no proper inclusion map $f \colon \A \to \B$, where $\B$ is a finitely generated member of $\mathsf{K}$ ``of a specific kind'', is a $\mathsf{K}$-epimorphism. More precisely, we recall that 
$\mathsf{K}$ has a \emph{near unanimity term} \cite{KaaPix}  if there exists a term $\varphi$ of arity $\geq 3$ such that
\[
\mathsf{K} \vDash x \thickapprox \varphi(y, x, \dots, x) \thickapprox \varphi(x, y, x, \dots, x) \thickapprox \dots \thickapprox \varphi(x, \dots, x, y).
\]
We show that if $\mathsf{K}$ has a near unanimity term of arity $n$, we may assume that the algebra $\B$ is a subdirect product $\B \leq \C_1 \times \dots \times \C_{n-1}$ of  relatively finitely subdirectly irreducible factors (Theorem \ref{Thm : near unanimity}). 
Furthermore, we prove that if $\mathsf{K}$ is a congruence permutable variety, we may  assume that the algebra $\B$ is finitely subdirectly irreducible (Theorem \ref{Thm : CP locally finite}). Notably, the first result applies to every variety of algebras with a lattice reduct and the second to every variety of algebras with a group reduct (see, e.g., \cite[p.\ 79 \& Thm.\ II.12.2]{BuSa00}), thus covering most examples from algebra and logic.\footnote{We remark that Theorems \ref{Thm : near unanimity} and \ref{Thm : CP locally finite} are slightly stronger than suggested here, but their formulation would require additional definitions.}

On the other hand, we show that, although the theory of the weak ES property is simpler than that of its traditional variant, the weak ES property has a profound impact on the structure theory of quasivarieties.
More precisely, we show that for congruence distributive quasivarieties  $\mathsf{K}$ whose class of relatively finitely subdirectly irreducible members is closed under nontrivial subalgebras, the weak ES property implies that the variety generated by $\mathsf{K}$ is \emph{arithmetical}, i.e., both congruence distributive and congruence permutable (Theorem \ref{Thm : weak ES implies CP}). As a consequence, every \emph{filtral variety}  \cite{Magari1969} with the weak ES property must be a \emph{discriminator variety} \cite{We78} (Example \ref{Cor : final corrolary}), see also \cite{CamperVagg}.

We remark that Theorems \ref{Thm : near unanimity} and \ref{Thm : CP locally finite} strengthen similar observations on the surjectivity of all $\mathsf{K}$-epimorphisms \cite[Thms.\ 6.4 \& 6.8]{Camper18jsl} (here Theorems \ref{Thm : Campercholi unanimity} and \ref{Thm : Campercholi CP}). Both our improvements require new proof strategies. For instance, \cite[Thm.\ 6.8]{Camper18jsl} states that all $\mathsf{K}$-epimorphisms are surjective for an arithmetical variety $\mathsf{K}$ whose class of finitely subdirectly irreducible members is universal provided that no proper inclusion map $f \colon \A \to \B$, where $\B$ is  finitely subdirectly irreducible, is a $\mathsf{K}$-epimorphism.  We show that, for the weak ES property, this is true under the sole assumption of congruence permutability (Theorem \ref{Thm : CP locally finite}).  This allows us to dispense with the main ingredients of \cite{Camper18jsl}, namely, the theory of definability in \cite{CamperVagg-DEF},  sheaf representations \cite{GramaVaggio96,KrCl79}, and the \emph{infinitary Baker-Pixley Theorem} \cite{VaggBP18} (see also \cite{CamperVagg-BaPix}). 
Instead, we capitalize on the new notion of a full subalgebra and its interaction with subdirect representations.

\section{Varieties and quasivarieties}

The classes of algebras considered in this paper will be assumed to comprise only similar algebras. We denote the class operators of closure under isomorphic copies, subalgebras, homomorphic images, direct products, and ultraproducts by $\III, \SSS, \HHH, \PPP$, and $\PPU$, respectively. A class of algebras is said to be:
\benroman
\item a \emph{variety} when it is closed under $\HHH, \SSS$, and $\PPP$;
\item a \emph{quasivariety} when it is closed under $\III, \SSS, \PPP$, and $\PPU$.
\eroman
In view of Birkhoff's and Maltsev's Theorems, varieties and quasivarieties coincide with the classes of algebras axiomatized by equations and quasiequations, respectively (see, e.g., \cite[Thms.\ II.11.9 \& V.2.25]{BuSa00}). While every variety is a quasivariety, the converse is not true in general.\ We denote the least variety and the least quasivariety containing a class of algebras $\mathsf{K}$ by $\VVV(\mathsf{K})$ and $\QQQ(\mathsf{K})$, respectively. A variety (resp.\ quasivariety) $\mathsf{K}$ is \emph{finitely generated} when $\mathsf{K} = \VVV(\mathsf{M})$ (resp.\ $\mathsf{K} = \QQQ(\mathsf{M})$) for a finite set $\mathsf{M}$ of finite algebras.

As quasivarieties need not be closed under $\HHH$, the following concept is often useful.\ Let $\mathsf{K}$ be a quasivariety and $\A \in \mathsf{K}$. A congruence $\theta$ of $\A$ is said to be a $\mathsf{K}$\emph{-congruence} when $\A / \theta \in \mathsf{K}$. Owing to the fact that $\mathsf{K}$ is closed under $\III$ and $\SSS$,  the Homomorphism Theorem yields that  the kernel
\[
\mathsf{Ker}(f) \coloneqq \{ \langle a, b \rangle \in A \times A : f(a) = f(b) \}
\]
of every homomorphism $f \colon \A \to \B$ with $\B \in \mathsf{K}$ is a $\mathsf{K}$-congruence of $\A$.\ When ordered under the inclusion relation, the set of $\mathsf{K}$-congruences of $\A$ forms an algebraic lattice $\mathsf{Con}_\mathsf{K}(\A)$ in which  meets are intersections. Given $X \subseteq A \times A$, we denote the least $\mathsf{K}$-congruence of $\A$ containing $X$ by $\textup{Cg}_\mathsf{K}^\A(X)$. When $\mathsf{K}$ is a variety, $\mathsf{Con}_\mathsf{K}(\A)$ coincides with the congruence lattice $\mathsf{Con}(\A)$ of $\A$ and $\textup{Cg}_\mathsf{K}^\A(X)$ is the least congruence of $\A$ containing $X$, in symbols, $\textup{Cg}^\A(X)$. 
An immediate generalization to quasivarieties of \cite[Thm.\ II.6.20]{BuSa00} shows that, given a member $\A$ of a quasivariety $\mathsf{K}$ and $\theta \in \mathsf{Con}_\mathsf{K}(\A)$, the lattice $\mathsf{Con}_\mathsf{K}(\A / \theta)$ can be described as follows:

\begin{Correspondence Theorem}\label{Thm : correspondence}
Let $\mathsf{K}$ be a quasivariety, $\A \in \mathsf{K}$, and $\theta \in \mathsf{Con}_\mathsf{K}(\A)$. Moreover, let ${\uparrow}\theta$ be the sublattice of $\mathsf{Con}_\mathsf{K}(\A)$ with universe $\{ \phi \in \mathsf{Con}_\mathsf{K}(\A) : \theta \subseteq \phi \}$. Then the map
\[
f \colon {\uparrow}\theta \to \mathsf{Con}_\mathsf{K}(\A / \theta)
\]
defined by the rule $f(\phi) \coloneqq \{ \langle a / \theta, b / \theta \rangle : \langle a, b \rangle \in \phi \}$ is a lattice isomorphism.
\end{Correspondence Theorem}

Given a homomorphism $f \colon \A \to \B$, we denote the subalgebra of $\B$ with universe $f[A]$ by $f[\A]$. Furthermore, we write $\A \leq \B$ to indicate that $\A$ is a subalgebra of $\B$. Then an algebra $\A$ is a \emph{subdirect product} of a family $\{ \B_i : i \in I \}$ when $\A \leq \prod_{i \in I}\B_i$ and for every $i \in I$ the projection map $p_i \colon \A \to \B_i$ is surjective.\ Similarly, an embedding $f \colon \A \to \prod_{i \in I}\B_i$ is called \emph{subdirect} when $f[\A]\leq \prod_{i \in I}\B_i$ is a subdirect product. The next result simplifies the task of constructing subdirect embeddings (see, e.g., \cite[Lem.\ II.8.2]{BuSa00}): 

\begin{Proposition}\label{Prop : subdirect embedding}
Let $\A$ be an algebra and $X \subseteq \mathsf{Con}(\A)$. Then the map
\[
f \colon \A / \bigcap X \to \prod_{\theta \in X}\A / \theta 
\]
defined by the rule $f(a / \bigcap X) \coloneqq \langle a / \theta : \theta \in X \rangle$ is a subdirect embedding.
\end{Proposition}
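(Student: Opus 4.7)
The plan is to exhibit $f$ as the map induced by the Homomorphism Theorem from the natural parallel quotient map, and then read off the subdirect embedding properties one by one.

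First I would define the auxiliary homomorphism $g \colon \A \to \prod_{\theta \in X} \A / \theta$ by the rule $g(a) \coloneqq \langle a / \theta : \theta \in X \rangle$. Since each component $a \mapsto a / \theta$ is the canonical surjection $\A \twoheadrightarrow \A / \theta$ and homomorphisms into direct products are determined componentwise, $g$ is a homomorphism. A direct computation shows that
\[
\mathsf{Ker}(g) = \{ \langle a, b \rangle \in A \times A : a / \theta = b / \theta \text{ for every } \theta \in X \} = \bigcap X.
\]
By the Homomorphism Theorem, $g$ factors through $\A / \bigcap X$ as $g = f \circ \pi$, where $\pi \colon \A \twoheadrightarrow \A / \bigcap X$ is the canonical surjection and $f$ is precisely the map described in the statement; in particular, $f$ is well defined and a homomorphism.

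Next I would check injectivity: if $f(a / \bigcap X) = f(b / \bigcap X)$, then $a / \theta = b / \theta$ for every $\theta \in X$, whence $\langle a, b \rangle \in \bigcap X$ and therefore $a / \bigcap X = b / \bigcap X$. Thus $f$ is an embedding. Finally, to see that the embedding is subdirect, I would fix $\theta \in X$ and verify that the projection $p_\theta \colon \prod_{\sigma \in X} \A / \sigma \to \A / \theta$ satisfies $p_\theta \circ f = \pi_\theta$, where $\pi_\theta \colon \A / \bigcap X \to \A / \theta$ sends $a / \bigcap X$ to $a / \theta$. Since $\bigcap X \subseteq \theta$, the map $\pi_\theta$ is surjective onto $\A / \theta$, so $p_\theta \circ f$ is surjective as well, establishing that $f[\A / \bigcap X] \leq \prod_{\theta \in X} \A / \theta$ is a subdirect product.

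There is no real obstacle here; the only place to be careful is not to conflate $\bigcap X$ (the set-theoretic intersection of congruences in $\mathsf{Con}(\A)$) with an infimum in $\mathsf{Con}_\mathsf{K}(\A)$ for some quasivariety $\mathsf{K}$, since the proposition is stated in terms of ordinary congruences where meets in $\mathsf{Con}(\A)$ really are intersections.
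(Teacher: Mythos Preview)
Your argument is correct and is the standard one: factor the parallel quotient map through $\A/\bigcap X$ via the Homomorphism Theorem, then read off injectivity from $\mathsf{Ker}(g)=\bigcap X$ and subdirectness from the surjectivity of each canonical map $\A/\bigcap X \to \A/\theta$. The paper does not give its own proof of this proposition; it merely states the result and cites \cite[Lem.~II.8.2]{BuSa00}, so there is nothing further to compare.
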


Let $\mathsf{K}$ be a quasivariety. An algebra $\A \in \mathsf{K}$ is said to be \emph{subdirectly irreducible} relative to $\mathsf{K}$ (RSI, for short) when for every subdirect embedding $f \colon \A \to \prod_{i \in I}\B_i$ with $\{ \B_i : i \in I \} \subseteq \mathsf{K}$ there exists $i \in I$ such that $p_i \circ f \colon \A \to \B_i$ is an isomorphism. In case this happens whenever the index set $I$ is finite, 
we say that $\A$ is \emph{finitely subdirectly irreducible} relative to $\mathsf{K}$ (RFSI, for short).\footnote{ When the language under consideration is clear, we adopt the following convention: the direct product of an empty family of algebras is the trivial algebra in the language under consideration. Consequently, trivial algebras are considered neither RSI nor RFSI.} 
The classes of RSI and RFSI members of $\mathsf{K}$ will be denoted by $\mathsf{K}_{\textsc{rsi}}$ and $\mathsf{K}_{\textsc{rfsi}}$, respectively. When $\mathsf{K}$ is a variety, the requirement that $\{ \B_i : i \in I \}$ is a subset of $\mathsf{K}$ in the above definitions can be harmlessly dropped and we simply say that $\A$ is \emph{subdirectly irreducible} (SI) or \emph{finitely subdirectly irreducible} (FSI) (i.e., we drop the ``relative to  $\mathsf{K}$''). In this case, we also write $\mathsf{K}_{\textsc{si}}$ and $\mathsf{K}_{\textsc{fsi}}$ instead of $\mathsf{K}_{\textsc{rsi}}$ and $\mathsf{K}_{\textsc{rfsi}}$.

The importance of subdirect embeddings and R(F)SI algebras derives from the following  representation theorem (see, e.g., \cite[Thm.\ 3.1.1]{Go98a}):

\begin{Subdirect Decomposition Theorem}\label{Thm : Subdirect Decomposition}
Let $\mathsf{K}$ be a quasivariety. For every $\A \in \mathsf{K}$ there exists a subdirect embedding $f \colon \A \to \prod_{i \in I}\B_i$ with $\{ \B_i : i \in I \} \subseteq \mathsf{K}_{\textsc{rsi}}$. When $\A$ is finite, $I$ and each $\B_i$ can be chosen finite. 
\end{Subdirect Decomposition Theorem}

Notably, the RSI and RFSI members of a quasivariety $\mathsf{K}$ can be recognized by looking at the structure of their lattices of $\mathsf{K}$-congruences.
To explain how, we recall that an element $a$ of a complete lattice $\A$ is

\benroman
\item  \emph{completely meet irreducible} if for every family $\{a_i : i \in I\} \subseteq A$,
    \[
    a = \bigwedge_{i \in I} a_i \textrm{ implies } a = a_i \textrm{ for some } i \in I;
    \]
    \item \emph{meet irreducible} if the above display holds whenever $I$ is finite.
\eroman
 As the set of indices $I$ is allowed to be empty, the greatest element of $\A$ is never meet irreducible.

Given $\A \in \mathsf{K}$, let
\begin{align*}
\mathsf{Irr}^\infty_\mathsf{K}(\A) &\coloneqq \text{the set of completely meet irreducible elements of }\mathsf{Con}_\mathsf{K}(\A);\\
\mathsf{Irr}_{\mathsf{K}}(\A) &\coloneqq \text{the set of meet irreducible elements of }\mathsf{Con}_\mathsf{K}(\A).
\end{align*}
Furthermore, we denote the identity relation on $\A$ by $\textup{id}_A$.\  The following is a consequence of the Correspondence Theorem \ref{Thm : correspondence} and \cite[Cor.\ 1.4.8]{Go98a}:

\begin{Proposition}\label{Prop : RFSI}
Let $\A$ be a member of a quasivariety $\mathsf{K}$. For every $\theta \in \mathsf{Con}(\A)$ we have
\begin{align*}
\A / \theta \in \mathsf{K}_{\textup{\textsc{rsi}}} \, \, &\text{ if and only if } \, \, \theta \in \mathsf{Irr}_{\mathsf{K}}^\infty(\A);\\
\A / \theta \in \mathsf{K}_{\textup{\textsc{rfsi}}} \, \, &\text{ if and only if } \, \, \theta \in \mathsf{Irr}_{\mathsf{K}}(\A).
\end{align*}
Therefore, $\A\in \mathsf{K}_{\textup{\textsc{rsi}}}$  \textup{(}resp.\ $\A\in \mathsf{K}_{\textup{\textsc{rfsi}}}$\textup{)} if and only if $\textup{id}_A \in \mathsf{Irr}_{\mathsf{K}}^\infty(\A)$ \textup{(}resp.\ $\textup{id}_A \in \mathsf{Irr}_{\mathsf{K}}(\A)$\textup{)}.
\end{Proposition}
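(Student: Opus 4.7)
The plan is to reduce both biconditionals to the special case $\theta = \textup{id}_A$ by means of the Correspondence Theorem \ref{Thm : correspondence}, and then to invoke the cited characterization \cite[Cor.\ 1.4.8]{Go98a}, which is precisely this special case: a member $\B$ of a quasivariety $\mathsf{K}$ lies in $\mathsf{K}_{\textsc{rsi}}$ (resp.\ $\mathsf{K}_{\textsc{rfsi}}$) if and only if $\textup{id}_B$ is completely meet irreducible (resp.\ meet irreducible) in $\mathsf{Con}_\mathsf{K}(\B)$.

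First I would dispose of the degenerate situation where $\theta$ fails to be a $\mathsf{K}$-congruence: then $\A / \theta \notin \mathsf{K}$, so $\A / \theta$ lies in neither $\mathsf{K}_{\textsc{rsi}}$ nor $\mathsf{K}_{\textsc{rfsi}}$, while $\theta \notin \mathsf{Con}_\mathsf{K}(\A)$ already excludes $\theta$ from both $\mathsf{Irr}_\mathsf{K}^\infty(\A)$ and $\mathsf{Irr}_\mathsf{K}(\A)$; thus both sides of the biconditionals are vacuously false. Henceforth I assume $\theta \in \mathsf{Con}_\mathsf{K}(\A)$ and apply the Correspondence Theorem to obtain a lattice isomorphism $\tau \colon {\uparrow}\theta \to \mathsf{Con}_\mathsf{K}(\A / \theta)$ that sends $\theta$ to $\textup{id}_{A / \theta}$. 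Combining $\tau$ with the base case applied to $\A / \theta$ yields
\begin{align*}
\A / \theta \in \mathsf{K}_{\textsc{rsi}} \, \, &\text{ iff } \, \, \theta \text{ is completely meet irreducible in } {\uparrow}\theta, \\
\A / \theta \in \mathsf{K}_{\textsc{rfsi}} \, \, &\text{ iff } \, \, \theta \text{ is meet irreducible in } {\uparrow}\theta.
\end{align*}

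It remains to lift (complete) meet irreducibility from ${\uparrow}\theta$ to all of $\mathsf{Con}_\mathsf{K}(\A)$. This is immediate: any family in $\mathsf{Con}_\mathsf{K}(\A)$ whose meet is $\theta$ must consist of upper bounds of $\theta$ and hence already live in ${\uparrow}\theta$, and meets coincide in the two lattices since they are computed as intersections. The two biconditionals of the proposition follow, and the final ``Therefore'' clause is just the specialization $\theta = \textup{id}_A$. I do not anticipate any serious obstacle; the argument is a routine transfer through the Correspondence Theorem, with the substantive content imported from \cite[Cor.\ 1.4.8]{Go98a}.
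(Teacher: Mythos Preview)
Your proposal is correct and matches exactly the approach the paper indicates: the paper does not give a proof but simply states that the proposition ``is a consequence of the Correspondence Theorem \ref{Thm : correspondence} and \cite[Cor.\ 1.4.8]{Go98a}'', which is precisely the reduction you carry out. The only point worth noting is that the Correspondence Theorem is stated as a \emph{lattice} isomorphism, whereas you need preservation of arbitrary meets for the completely meet irreducible case; but this is automatic since a lattice isomorphism is an order isomorphism and hence preserves all existing meets.
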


As a consequence, a member $\A$ of a quasivariety $\mathsf{K}$ is RSI precisely when it has a least nonidentity $\mathsf{K}$-congruence, called the \emph{monolith} of $\A$. When it exists, the monolith of $\A$ is always the $\mathsf{K}$-congruence of $\A$ generated by a pair of distinct elements $a, b \in A$, which we denote by $\textup{Cg}_\mathsf{K}^\A(a, b)$.

 Let $\A_1, \dots, \A_n$ be algebras and $\theta_i \in \mathsf{Con}(\A_i)$ for each $i \leq n$. Then the relation
\[
\theta_1 \times \dots \times \theta_n \coloneqq \{ \langle \langle a_1, \dots, a_n \rangle, \langle b_1, \dots, b_n \rangle \rangle \in (A_1\times \dots \times A_n)^2 : \langle a_i, b_i \rangle \in \theta_i \text{ for each  } i \leq n \}
\]
is a congruence of the direct product $\A_1 \times \dots \times \A_n$. Given a pair of algebras $\A \leq \B$ and $\theta \in \mathsf{Con}(\B)$, we write $\theta{\upharpoonright}_A$ as a shorthand for $\theta \cap (A \times A)$. Notice that $\theta{\upharpoonright}_A$ is a congruence of $\A$. 
A congruence $\theta$ of a subdirect product $\A \leq \B_1 \times \dots \times \B_n$ is said to be a \emph{product congruence} when $\theta = (\theta_1 \times \dots \times \theta_n){\upharpoonright}_A$ for some $\theta_1 \in \mathsf{Con}(\B_1), \dots, \theta_n \in \mathsf{Con}(\B_n)$. 

A quasivariety $\mathsf{K}$ is said to be \emph{congruence distributive} when $\mathsf{Con}_\mathsf{K}(\A)$ is a distributive lattice for every $\A \in \mathsf{K}$.  The next result is an effortless generalization to quasivarieties of \cite[Thm.\ 1.2.20]{KaaPix}:
\begin{Theorem}\label{Thm : CD varieties}
A quasivariety $\mathsf{K}$ is congruence distributive iff for every subdirect product $\A \leq \B_1 \times \dots \times  \B_n$ with $\B_1, \dots, \B_n \in \mathsf{K}$ and every $\theta \in \mathsf{Con}_\mathsf{K}(\A)$ there exist $\theta_1 \in \mathsf{Con}_\mathsf{K}(\B_1), \dots, \theta_n \in \mathsf{Con}_\mathsf{K}(\B_n)$ such that $\theta = (\theta_1 \times \dots \times \theta_n){\upharpoonright}_A$.
\end{Theorem}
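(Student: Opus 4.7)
The plan is to prove both directions by exploiting, for any subdirect product $\A \leq \B_1 \times \dots \times \B_n$, the identity $\bigcap_{i=1}^{n} \pi_i = \textup{id}_A$, where $\pi_i := \mathsf{Ker}(p_i)$. Since each projection $p_i$ is surjective, $\pi_i$ is a $\mathsf{K}$-congruence with $\A/\pi_i \cong \B_i$, and the Correspondence Theorem \ref{Thm : correspondence} identifies the interval ${\uparrow}\pi_i$ in $\mathsf{Con}_\mathsf{K}(\A)$ with $\mathsf{Con}_\mathsf{K}(\B_i)$ via the mutually inverse maps $\phi \mapsto p_i[\phi]$ and $\psi \mapsto p_i^{-1}[\psi]$.

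For the forward implication, I would take any $\theta \in \mathsf{Con}_\mathsf{K}(\A)$ and let $\theta_i \in \mathsf{Con}_\mathsf{K}(\B_i)$ be the image of $\theta \vee \pi_i$ under the above isomorphism, so that $p_i^{-1}[\theta_i] = \theta \vee \pi_i$. Unpacking the definition of the product congruence gives
\[
(\theta_1 \times \dots \times \theta_n){\upharpoonright}_A \;=\; \bigcap_{i=1}^{n} p_i^{-1}[\theta_i] \;=\; \bigcap_{i=1}^{n}(\theta \vee \pi_i),
\]
and the congruence distributivity of $\mathsf{Con}_\mathsf{K}(\A)$ then pulls the join out of the meet, yielding
\[
\bigcap_{i=1}^{n}(\theta \vee \pi_i) \;=\; \theta \vee \bigcap_{i=1}^{n}\pi_i \;=\; \theta \vee \textup{id}_A \;=\; \theta,
\]
as required.

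For the converse, I would verify the law $\alpha \vee (\beta \cap \gamma) = (\alpha \vee \beta) \cap (\alpha \vee \gamma)$ for arbitrary $\alpha, \beta, \gamma \in \mathsf{Con}_\mathsf{K}(\A)$. Proposition \ref{Prop : subdirect embedding} applied to $\{\beta, \gamma\}$ produces a subdirect embedding $\A' := \A/(\beta \cap \gamma) \hookrightarrow \A/\beta \times \A/\gamma$, with $\A' \in \mathsf{K}$ because it embeds into a product of $\mathsf{K}$-quotients. Writing $\pi_1, \pi_2$ for the kernels of the resulting projections and applying the hypothesis to $\theta := (\alpha \vee (\beta \cap \gamma))/(\beta \cap \gamma) \in \mathsf{Con}_\mathsf{K}(\A')$, I obtain $\theta_1, \theta_2$ with $\theta = (\theta_1 \times \theta_2){\upharpoonright}_{A'}$. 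Since each $p_i^{-1}[\theta_i]$ is a $\mathsf{K}$-congruence of $\A'$ containing both $\theta$ and $\pi_i$, the sandwich
\[
\theta \;\subseteq\; (\theta \vee \pi_1) \cap (\theta \vee \pi_2) \;\subseteq\; p_1^{-1}[\theta_1] \cap p_2^{-1}[\theta_2] \;=\; \theta
\]
collapses to $\theta = (\theta \vee \pi_1) \cap (\theta \vee \pi_2)$. Transferring this identity back to $\A$ via the Correspondence Theorem and simplifying $(\beta \cap \gamma) \vee \beta = \beta$ and $(\beta \cap \gamma) \vee \gamma = \gamma$, it reads precisely as the desired distributive law.

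The main delicacy I anticipate is purely bookkeeping: joins throughout must be taken inside $\mathsf{Con}_\mathsf{K}(\A)$ rather than in the full congruence lattice (which may be strictly larger when $\mathsf{K}$ is a proper quasivariety), and every intermediate quotient must be verified to stay in $\mathsf{K}$. Both are handled uniformly by repeated appeals to the Correspondence Theorem \ref{Thm : correspondence} and to the $\SSS$- and $\PPP$-closure of $\mathsf{K}$.
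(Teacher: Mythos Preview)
Your argument is correct. The paper does not actually prove this theorem: it is stated as ``an effortless generalization to quasivarieties of \cite[Thm.\ 1.2.20]{KaaPix}'' and left without proof. Your proof is precisely the standard argument for varieties (as in Kaarli--Pixley) with the bookkeeping adjustments you flag---working in $\mathsf{Con}_\mathsf{K}(\A)$ throughout and invoking the Correspondence Theorem \ref{Thm : correspondence} for $\mathsf{K}$-congruences---so it is exactly the ``effortless generalization'' the paper alludes to.
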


The following theorem can be derived from
J\'onsson's Lemma (see, e.g., \cite[Lem.\ 5.9]{Be11g}), which is one of the main consequences of congruence distributivity. 
\begin{Theorem}\label{Jonsson Lemma}
Let $\mathsf{K}$ be a class of algebras such that $\VVV(\mathsf{K})$ is congruence distributive. Then the FSI members of $\VVV(\mathsf{K})$ belong to $\HHH\SSS\PPU(\mathsf{K})$.
\end{Theorem}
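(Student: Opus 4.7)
The plan is to adapt the classical ultrafilter argument for J\'onsson's Lemma, using the hypothesis that $\VVV(\mathsf{K})$ is congruence distributive to produce a suitable ultrafilter. Let $\A \in \VVV(\mathsf{K})$ be finitely subdirectly irreducible. By Birkhoff's HSP theorem I obtain a subalgebra $\B \leq \prod_{i \in I}\C_i$ with each $\C_i \in \mathsf{K}$, together with a surjective homomorphism $h \colon \B \to \A$. Setting $\theta := \mathsf{Ker}(h)$, we have $\A \cong \B/\theta$. Because $\VVV(\mathsf{K})$ is a variety, $\mathsf{Con}_{\VVV(\mathsf{K})}(\B) = \mathsf{Con}(\B)$; Proposition \ref{Prop : RFSI} applied to $\VVV(\mathsf{K})$ then says that $\theta$ is meet irreducible in $\mathsf{Con}(\B)$, which by hypothesis is a distributive lattice.

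For each $J \subseteq I$ let $\theta_J$ denote the kernel of the restriction to $\B$ of the projection $\prod_{i \in I}\C_i \to \prod_{i \in J}\C_i$, so that $\theta_I = \textup{id}_B$, $\theta_\emptyset = B \times B$, and $\theta_{J_1 \cup J_2} = \theta_{J_1} \cap \theta_{J_2}$. I focus on
\[
F := \{ J \subseteq I : \theta_J \subseteq \theta \}.
\]
The set $F$ is upward closed under $\subseteq$ (since $J \subseteq J'$ entails $\theta_{J'} \subseteq \theta_J$), contains $I$, and avoids $\emptyset$ because $\A$ is nontrivial (by the convention on FSI algebras in the footnote). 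The crucial step is to show that $F$ is union-prime: if $J_1 \cup J_2 \in F$, then $J_1 \in F$ or $J_2 \in F$. From $\theta_{J_1} \cap \theta_{J_2} = \theta_{J_1 \cup J_2} \subseteq \theta$, distributivity of $\mathsf{Con}(\B)$ rewrites
\[
\theta = \theta \vee (\theta_{J_1} \cap \theta_{J_2}) = (\theta \vee \theta_{J_1}) \wedge (\theta \vee \theta_{J_2}),
\]
and meet irreducibility of $\theta$ forces $\theta \vee \theta_{J_k} = \theta$ for some $k \in \{1,2\}$, i.e., $\theta_{J_k} \subseteq \theta$.

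Upward closure and union-primeness make $\mathcal{P}(I) \setminus F$ a proper ideal of the Boolean algebra $\mathcal{P}(I)$, and by the Boolean prime ideal theorem I extend it to a maximal ideal whose complement is an ultrafilter $U$ on $I$ with $U \subseteq F$. Consider
\[
\psi_U := \{ (b, b') \in B \times B : \{ i \in I : b(i) = b'(i) \} \in U \},
\]
the kernel of the canonical homomorphism from $\B$ into the ultraproduct $\prod_{i \in I}\C_i / U$. The inclusion $U \subseteq F$ gives $\psi_U \subseteq \theta$: any $(b,b') \in \psi_U$ lies in $\theta_J$ for $J := \{ i : b(i) = b'(i)\} \in U \subseteq F$, hence in $\theta$. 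Therefore $\B/\psi_U$ embeds into $\prod_{i \in I}\C_i / U \in \PPU(\mathsf{K})$, placing $\B/\psi_U$ in $\SSS\PPU(\mathsf{K})$, and $\A \cong \B/\theta$ is a homomorphic image of $\B/\psi_U$, yielding $\A \in \HHH\SSS\PPU(\mathsf{K})$ as required.

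I expect the main (and really only) subtle point to be the union-primeness of $F$; everything else is routine use of Birkhoff's theorem, the Boolean prime ideal theorem, and the standard recognition of the ultraproduct as a reduced-product quotient of $\prod_{i \in I}\C_i$.
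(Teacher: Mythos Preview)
Your argument is correct and is precisely the classical ultrafilter proof of J\'onsson's Lemma for finitely subdirectly irreducible algebras. Note, however, that the paper does not supply its own proof of this statement: it simply records the result as a consequence of J\'onsson's Lemma and points to \cite[Lem.\ 5.9]{Be11g}. So there is no in-paper argument to compare against; your proof is the standard one that the cited reference contains, and every step (meet irreducibility of $\theta$ via Proposition~\ref{Prop : RFSI}, union-primeness of $F$ from distributivity, the passage from the proper ideal $\mathcal{P}(I)\setminus F$ to an ultrafilter $U\subseteq F$, and the factorization $\B/\psi_U \twoheadrightarrow \B/\theta$) is sound.
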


Lastly, given an algebra $\A$ and a set $X \subseteq A$, we denote the least subuniverse of $\A$ containing $X$ by $\textup{Sg}^\A(X)$. When $A = \textup{Sg}^\A(X)$ for some finite $X \subseteq A$, we say that $\A$ is \emph{finitely generated}. If every finitely generated subalgebra of $\A$ is finite, we call $\A$ \emph{locally finite}. A class of algebras is \emph{locally finite} when its members are.

\section{Epimorphism surjectivity}

\begin{Definition}
A quasivariety $\mathsf{K}$ is said to have:
\benroman
\item the \emph{epimorphism surjectivity property} (ES property, for short) when every $\mathsf{K}$-epimorphism is surjective;
\item the \emph{weak epimorphism surjectivity property} (weak ES property, for short) when every $\mathsf{K}$-epimorphism between finitely generated algebras is surjective.\footnote{
The weak ES property is often phrased as the demand that every $\mathsf{K}$-epimorphism $f \colon \A \to \B$ such that $B = \textup{Sg}^\B(f[A] \cup X)$ for some finite $X \subseteq B$ is surjective \cite{BlHoo06}. From Proposition \ref{Prop : epic} and \cite[Thm.\ 5.4]{MRJ18sl} it follows that the two definitions are equivalent.}
\eroman
\end{Definition}

Examples separating the ES property from its weak variant abound.\ For instance, when phrased in algebraic terms, a theorem of Kreisel \cite[Thm.\ 1]{Kreisel60JSL} states that all varieties of Heyting algebras have the weak ES property  (for an algebraic proof, see Example \ref{Exa : Kreisel}). However, a continuum of them lacks the ES property \cite[Thm.\ 8.4]{MorWan19es} (see also \cite{BMR16}). 

The task of determining whether a quasivariety has the weak ES property can be simplified 
using the notion of an epic subalgebra.
Given a quasivariety $\mathsf{K}$ and $\B \in \mathsf{K}$, we say that a subalgebra $\A \leq \B$ is \emph{epic} in $\mathsf{K}$ when the inclusion map $i \colon \A \to \B$ is a $\mathsf{K}$-epimorphism, that is, when for every $\C \in \mathsf{K}$ and every pair of homomorphisms $g, h \colon \B \to \C$,
\[
g{\upharpoonright}_A = h{\upharpoonright}_A \, \, \text{ implies } \, \, g= h. 
\]
Notice that a homomorphism $f \colon \A \to \B$ between members of $\mathsf{K}$ is a $\mathsf{K}$-epimorphism iff $f[\A] \leq \B$ is epic in $\mathsf{K}$. Lastly, we say that $\A \leq \B$ is \emph{almost total} when there exists some $b \in B$ such that $B = \textup{Sg}^\B(A \cup \{ b \})$. We will prove that these concepts are related as follows:

\begin{Proposition}\label{Prop : epic}
 A quasivariety $\mathsf{K}$ has the weak ES property iff its finitely generated members lack proper subalgebras that are almost total and epic in $\mathsf{K}$.
\end{Proposition}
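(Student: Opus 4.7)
The proposition has two directions, and I would prove each by directly producing the required object.

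For the $(\Leftarrow)$ direction, I would assume no finitely generated member of $\mathsf{K}$ has a proper almost total epic subalgebra and take a $\mathsf{K}$-epimorphism $f \colon \A \to \B$ between finitely generated algebras. By the observation preceding the proposition, $f[\A] \leq \B$ is epic, and it is finitely generated as the image of a finitely generated algebra. Assuming toward contradiction that $f$ is not surjective, so $f[\A] \subsetneq \B$, I would pick a finite generating set $\{y_1, \dots, y_k\}$ of $\B$ and build the ascending chain $\B_0 := f[\A]$, $\B_i := \textup{Sg}^\B(\B_{i-1} \cup \{y_i\})$, which terminates at $\B_k = \B$. Each $\B_i$ contains the epic $f[\A]$, so any two homomorphisms from $\B$ agreeing on $\B_i$ already agree on $f[\A]$ and hence everywhere; thus each $\B_i$ is epic in $\B$. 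Picking $m$ minimal with $\B_m = \B$ gives $\B_{m-1}$, which is finitely generated, proper in $\B$, almost total in $\B$ (witness $y_m$), and epic---contradicting the hypothesis.

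For the $(\Rightarrow)$ direction, assume weak ES and, toward contradiction, a proper almost total epic $\A \leq \B$ with $\B$ finitely generated. The plan is to produce a proper finitely generated epic subalgebra of $\B$, whose inclusion would then be a non-surjective $\mathsf{K}$-epimorphism between finitely generated algebras, contradicting weak ES. Using $B = \textup{Sg}^\B(A \cup \{b\})$ together with finite generation of $\B$, I can fix a finite $A_1 \subseteq A$ with $B = \textup{Sg}^\B(A_1 \cup \{b\})$, so that $\A_1 := \textup{Sg}^\A(A_1)$ is finitely generated, almost total and proper in $\B$. The delicate point is that $\A_1$ need not be epic in $\B$: two homomorphisms out of $\B$ agreeing on $A_1$ need not agree on all of $A$, and the epic property of $\A$ only constrains the latter situation.

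The main obstacle is closing this gap, and I would address it by a compactness argument in the lattice of $\mathsf{K}$-congruences. Via a pushout reformulation, $\A$ being epic in $\B$ is equivalent to the two canonical maps $\iota_1, \iota_2 \colon \B \to \B \sqcup_\A \B$ into the amalgamated sum in $\mathsf{K}$ agreeing on the witness $b$; equivalently, $(\iota_1(b), \iota_2(b))$ lies in the $\mathsf{K}$-congruence of the coproduct $\B \ast \B$ generated by $\{(\iota_1(a), \iota_2(a)) : a \in A\}$. Because $\mathsf{K}$-congruence lattices are algebraic, this containment is witnessed by finitely many generators, so there is a finite $A_0 \subseteq A$ (which I may take to contain $A_1$) such that $\A_0 := \textup{Sg}^\A(A_0)$ is finitely generated, almost total, proper (as $\A_0 \leq \A \subsetneq \B$), and already epic in $\B$. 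The inclusion $\A_0 \hookrightarrow \B$ is then the desired non-surjective $\mathsf{K}$-epimorphism between finitely generated algebras, completing the contradiction.
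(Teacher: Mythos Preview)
Your proof is correct and follows essentially the same route as the paper: your $(\Leftarrow)$ chain argument through the $\B_i$ is exactly the paper's construction of $\C = \textup{Sg}^\B(f[A] \cup X)$, and your $(\Rightarrow)$ compactness argument via the pushout $\B \sqcup_\A \B$ and algebraicity of $\mathsf{Con}_\mathsf{K}$ is precisely the content of the paper's Lemma~\ref{Lem : from thm 5.4}, which it imports from \cite{MRJ18sl} (cf.\ also \cite{Bacsich47}) rather than spelling out. The only cosmetic difference is that you package the compactness as a single containment $(\iota_1(b),\iota_2(b)) \in \textup{Cg}_\mathsf{K}^{\B\ast\B}(\{(\iota_1(a),\iota_2(a)) : a \in A\})$ witnessed by finitely many generators, which is a clean way to see why almost-totality makes the argument go through.
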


Since the subalgebras of a finitely generated algebra need not be finitely generated, the next observation is required to prove the implication from left to right in Proposition \ref{Prop : epic}:

\begin{Lemma}\label{Lem : from thm 5.4}
Let $\mathsf{K}$ be a quasivariety, $\B \in \mathsf{K}$, and $\A \leq \B$ proper, almost total, and epic in $\mathsf{K}$. Then there exist a finitely generated $\B' \in \mathsf{K}$ and $\A' \leq \B'$ finitely generated, proper, almost total, and epic in $\mathsf{K}$.
\end{Lemma}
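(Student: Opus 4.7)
The plan is to produce $\A'\leq\B'$ by slicing both $\A$ and $\B$ down to finitely generated subalgebras. Fix $b\in B$ witnessing almost totality, so $B=\textup{Sg}^\B(A\cup\{b\})$, and note $b\notin A$ because $\A$ is proper. For each finite $X\subseteq A$, I would set $\A_X\coloneqq\textup{Sg}^\A(X)$ and $\B_X\coloneqq\textup{Sg}^\B(X\cup\{b\})$. Both lie in $\mathsf{K}$ (closure under $\SSS$) and are finitely generated; the inclusion $\A_X\leq\B_X$ is automatically proper (since $b\in B_X\setminus A\supseteq A_X$) and almost total with witness $b$, because $B_X=\textup{Sg}^{\B_X}(A_X\cup\{b\})$. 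The key observation flowing from this last identity is that any two homomorphisms $\B_X\to\C$ that agree on $A_X$ are determined by their value at $b$; hence $\A_X\leq\B_X$ is epic in $\mathsf{K}$ iff, for every $\C\in\mathsf{K}$ and all $g,h\colon\B_X\to\C$ with $g{\upharpoonright}_{A_X}=h{\upharpoonright}_{A_X}$, one has $g(b)=h(b)$. So the task reduces to exhibiting one finite $X$ with this property.

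My next step would be to argue by contradiction. Assume no such $X$ exists: for each finite $X\subseteq A$ choose $\C_X\in\mathsf{K}$ and homomorphisms $g_X,h_X\colon\B_X\to\C_X$ with $g_X{\upharpoonright}_{A_X}=h_X{\upharpoonright}_{A_X}$ but $g_X(b)\ne h_X(b)$. Let $I$ be the set of finite subsets of $A$; the collection $\{\{Y\in I:X\subseteq Y\}:X\in I\}$ has the finite intersection property and thus extends to an ultrafilter $\mathcal{U}$ on $I$. Since $\mathsf{K}$ is closed under ultraproducts, $\D\coloneqq\prod_{X\in I}\C_X/\mathcal{U}$ lies in $\mathsf{K}$. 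I would then define $\hat g,\hat h\colon\B\to\D$ by writing each $b'\in B$ as $t^\B(a_1,\dots,a_n,b)$ with $a_1,\dots,a_n\in A$ and setting $\hat g(b')\coloneqq[X\mapsto g_X(b')]_{\mathcal{U}}$, where $g_X(b')$ is used for every $X\supseteq\{a_1,\dots,a_n\}$ (a set in $\mathcal{U}$) and an arbitrary value is picked on the complement; $\hat h$ is defined analogously.

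The hard part is verifying that $\hat g$ and $\hat h$ are well-defined homomorphisms and then extracting the contradiction. Well-definedness is independent of the chosen representation of $b'$, since any two sets of generators lie in a common finite subset of $A$, producing the same class in $\D$. For operation symbols $f$ of arity $n$ and $b'_1,\dots,b'_n\in B$, the set of $X\in I$ large enough that $b'_1,\dots,b'_n,f^\B(b'_1,\dots,b'_n)\in B_X$ belongs to $\mathcal{U}$; on it each $g_X$ preserves $f$, whence $\hat g(f^\B(b'_1,\dots,b'_n))=f^\D(\hat g(b'_1),\dots,\hat g(b'_n))$. Similarly $\hat g{\upharpoonright}_A=\hat h{\upharpoonright}_A$, because for each $a\in A$ the set $\{X\in I:a\in X\}$ is in $\mathcal{U}$ and on it $g_X(a)=h_X(a)$. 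Finally, $g_X(b)\ne h_X(b)$ holds for \emph{every} $X\in I$, so $\hat g(b)\ne\hat h(b)$. This contradicts the epicness of $\A\leq\B$ in $\mathsf{K}$, so some finite $X$ must yield an epic $\A_X\leq\B_X$, which gives the required $\A'=\A_X$ and $\B'=\B_X$.
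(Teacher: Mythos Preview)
Your argument is correct. The paper does not actually prove this lemma---it simply cites \cite[Thm.\ 5.4]{MRJ18sl} and \cite{Bacsich47}---and your ultraproduct compactness argument is exactly the standard technique behind those references, so you have supplied the details the paper outsources.
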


\begin{proof}
This is established in the proof of \cite[Thm.\ 5.4]{MRJ18sl} (see also \cite{Bacsich47}).
\end{proof}

We are now ready to prove Proposition \ref{Prop : epic}.

\begin{proof}[Proof of Proposition \ref{Prop : epic}.]
The implication from left to right follows from Lemma \ref{Lem : from thm 5.4}. To prove the other implication, suppose that $\mathsf{K}$ lacks the weak ES property. Then there exists a nonsurjective $\mathsf{K}$-epimorphism $f \colon \A \to \B$ with $\A$ and $\B$ finitely generated. Therefore, $f[\A] \leq \B$ is proper and epic in $\mathsf{K}$. Now, let $G$ be a finite set of generators for $\B$. As $f[\A] \leq \B$ is proper, the set $G - f[A]$ is nonempty. Then there are $X \subseteq G - f[A]$ and $b \in G-f[A]$ such that
\[
b \notin \textup{Sg}^\B(f[A] \cup X) \, \, \text{ and } \, \, B = \textup{Sg}^\B(f[A] \cup X \cup \{ b\}).
\]
Let $\C$ be the subalgebra of $\B$ with universe $\textup{Sg}^\B(f[A] \cup X)$. In view of the the above display, 
\[
b \notin C \, \, \text{ and } \, \, B = \textup{Sg}^\B(C \cup \{ b \}).
\]
Thus, $\C \leq \B$ is proper and almost total.\ As $f[\A] \leq \C$ by the construction of $\C$, from the assumption that $f[\A] \leq \B$ is epic in $\mathsf{K}$ it follows that $\C \leq \B$ is also epic in $\mathsf{K}$.
\end{proof}

The rest of this section is devoted to improving Proposition \ref{Prop : epic}. The next concept is instrumental to this purpose:

\begin{Definition}
Let $\B$ be a member of a quasivariety $\mathsf{K}$. A subalgebra $\A \leq \B$ is \emph{full} in $\mathsf{K}$ when it is proper, almost total, and for every $\theta \in \mathsf{Con}_\mathsf{K}(\B)$ it holds that
\[
\text{if }\theta \ne \textup{id}_B, \text{ then for every }b \in B \text{ there exists }a \in A \text{ s.t. }\langle a, b \rangle \in \theta.
\]
When $\A \leq \B$ is both epic and full in $\mathsf{K}$, we say that $\A \leq \B$ is \emph{fully epic} in $\mathsf{K}$.
\end{Definition}

Our aim is to prove the following:

\begin{Theorem}\label{Thm : weak ES vs fully epic}
A quasivariety $\mathsf{K}$ has the weak ES property iff for every finitely generated $\B \in \mathsf{K}$ and $\A \leq \B$ that is full in $\mathsf{K}$ one of the following conditions holds:
\benroman
\item\label{weak ES vs fully epic : item : 1} There are two distinct $\theta, \phi \in \mathsf{Con}_\mathsf{K}(\B)$ such that $\theta{\upharpoonright}_A = \phi{\upharpoonright}_A$;
\item\label{weak ES vs fully epic : item : 2} There are two distinct embeddings $g, h \colon \B \to \C$ with $\C \in \mathsf{K}_{\textup{\textsc{rsi}}}$ such that $g{\upharpoonright}_A = h{\upharpoonright}_A$.
\eroman 
Moreover, if condition (\ref{weak ES vs fully epic : item : 1}) holds, we may assume that $\theta = \textup{id}_B$.
\end{Theorem}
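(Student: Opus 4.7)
The plan is to prove each direction by analyzing what prevents $\A \hookrightarrow \B$ from being a $\K$-epimorphism, exploiting fullness both as a structural constraint on $\K$-congruences of $\B$ and as a tool for manufacturing nontrivial endomorphisms. The moreover clause will fall out of the backward direction.

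For the forward direction, assume weak ES and let $\A \leq \B$ be full in $\K$ with $\B$ finitely generated. Since $\A$ is proper and almost total, Proposition \ref{Prop : epic} together with weak ES force $\A \leq \B$ to be non-epic, so there exist distinct $g, h \colon \B \to \C$ in $\K$ with $g{\upharpoonright}_A = h{\upharpoonright}_A$. Set $\theta = \mathsf{Ker}(g)$ and $\phi = \mathsf{Ker}(h)$; both are $\K$-congruences of $\B$ with $\theta{\upharpoonright}_A = \phi{\upharpoonright}_A$. If $\theta \neq \phi$, condition (\ref{weak ES vs fully epic : item : 1}) holds. Otherwise $g, h$ factor through $\B/\theta$ as distinct embeddings agreeing on $A/\theta$, and if $\theta \neq \textup{id}_B$ fullness yields $A/\theta = B/\theta$, forcing these factorizations to coincide---a contradiction. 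Hence $\theta = \textup{id}_B$ and $g, h$ are distinct embeddings. Composing with a subdirect embedding $\C \hookrightarrow \prod_{i \in I} \D_i$ into factors in $\K_{\textsc{rsi}}$ (Subdirect Decomposition Theorem \ref{Thm : Subdirect Decomposition}) and projecting onto a coordinate $i^\ast$ at which the composites still differ, I iterate the dichotomy on $\pi_{i^\ast} g, \pi_{i^\ast} h \colon \B \to \D_{i^\ast}$; fullness again rules out the middle case, leaving (\ref{weak ES vs fully epic : item : 1}) or (\ref{weak ES vs fully epic : item : 2}) with $\C = \D_{i^\ast}$.

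For the backward direction I argue by contrapositive. If weak ES fails then, by Proposition \ref{Prop : epic}, there are a finitely generated $\B \in \K$ and a proper, almost total, epic $\A \leq \B$; fix $b_0 \in B \setminus A$ with $B = \textup{Sg}^\B(A \cup \{b_0\})$. To promote $\A$ to a full subalgebra after passing to a suitable quotient of $\B$, consider
\[
\Sigma \coloneqq \{\theta \in \mathsf{Con}_\K(\B) : \langle a, b_0 \rangle \notin \theta \textrm{ for every } a \in A\},
\]
which contains $\textup{id}_B$ and is closed under directed unions by algebraicity of $\mathsf{Con}_\K(\B)$. A maximal $\theta_0 \in \Sigma$ (Zorn) produces $\B_0 \coloneqq \B/\theta_0$ and $\A_0 \coloneqq q_{\theta_0}[\A]$, where $\B_0$ is finitely generated and $\A_0 \leq \B_0$ is proper, almost total, and epic (the latter by surjectivity of $q_{\theta_0}$). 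Fullness of $\A_0$ follows from the Correspondence Theorem \ref{Thm : correspondence} combined with maximality: any nontrivial $\K$-congruence $\theta$ of $\B_0$ corresponds to some $\theta' \supsetneq \theta_0$ in $\B$, which lies outside $\Sigma$ and hence $\theta'$-relates $b_0$ to some $a \in A$; writing an arbitrary $\beta \in B$ as $t(\bar a, b_0)$ with $\bar a \subseteq A$, the congruence property gives $(\beta, t(\bar a, a)) \in \theta'$ with $t(\bar a, a) \in A$, which descends to $\B_0$ as required.

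Applying the hypothesis to the full subalgebra $\A_0 \leq \B_0$ yields (\ref{weak ES vs fully epic : item : 1}) or (\ref{weak ES vs fully epic : item : 2}); both must contradict epicness. Condition (\ref{weak ES vs fully epic : item : 2}) does so immediately. For (\ref{weak ES vs fully epic : item : 1}), given distinct $\theta, \phi \in \mathsf{Con}_\K(\B_0)$ with common $A_0$-restriction $\eta$, the case $\eta \neq \textup{id}_{A_0}$ is ruled out by fullness: $\theta \cap \phi$ is then a nontrivial $\K$-congruence, so every pair $b_1, b_2 \in B_0$ admits $a_1, a_2 \in A_0$ with $\langle a_i, b_i \rangle \in \theta \cap \phi$, and a transitivity chase yields $\langle b_1, b_2 \rangle \in \theta \Leftrightarrow \langle a_1, a_2 \rangle \in \eta \Leftrightarrow \langle b_1, b_2 \rangle \in \phi$, forcing $\theta = \phi$. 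This establishes the moreover clause and reduces us to $\eta = \textup{id}_{A_0}$, so one of the two congruences---call it $\phi$---is nontrivial with $\phi{\upharpoonright}_{A_0} = \textup{id}_{A_0}$. Fullness applied to $\phi$ makes $q_\phi{\upharpoonright}_{A_0} \colon \A_0 \to \B_0/\phi$ a bijective homomorphism, hence an isomorphism; its inverse, composed with $q_\phi$ and the inclusion $\A_0 \hookrightarrow \B_0$, produces an endomorphism of $\B_0$ fixing $A_0$ pointwise but sending $b_0/\theta_0 \notin A_0$ into $A_0$---contradicting epicness of $\A_0 \leq \B_0$. The main obstacle is precisely the passage from ``proper almost total epic'' to ``full epic'' handled by the Zorn step: without this bridge, the strong constraint fullness imposes on \emph{every} nontrivial $\K$-congruence would not be available to turn conditions (\ref{weak ES vs fully epic : item : 1}) and (\ref{weak ES vs fully epic : item : 2}) into genuine obstructions to epicness.
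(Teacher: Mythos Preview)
Your proof is correct and follows essentially the same approach as the paper. The paper factors the argument into separate statements---Proposition \ref{Prop : full quotients} (the Zorn step producing a full quotient), Corollary \ref{Cor : fully epic = weak ES}, Lemma \ref{Lem : technical phi theta separation} (the transitivity chase showing $\theta \cap \phi = \textup{id}_B$), and Proposition \ref{Prop : weak ES vs fully epic}---while you inline all of these; in the forward direction the paper first reduces $\C$ to a member of $\mathsf{K}_{\textsc{rsi}}$ and then splits on injectivity of $g,h$, whereas you first compare kernels, then reduce to an RSI target and iterate the kernel dichotomy once, which is an equivalent reorganization.
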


The proof of Theorem \ref{Thm : weak ES vs fully epic} proceeds through a series of technical observations. Given an algebra $\A$, we say that a family $\{ \theta_i : i \in I \} \subseteq \mathsf{Con}(\A)$ is a \emph{chain} when for every $i, j \in I$ either $\theta_i \subseteq \theta_j$ or $\theta_j \subseteq \theta_i$. We will require the following well-known fact.

\begin{Proposition}\label{Prop : chains of congruences}
Let $\A$ be a member of a quasivariety $\mathsf{K}$. The union of a nonempty chain of $\mathsf{K}$-congruences of $\A$ is still a $\mathsf{K}$-congruence of $\A$.
\end{Proposition}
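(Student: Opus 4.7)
The plan is to let $\theta \coloneqq \bigcup_{i \in I}\theta_i$ and to establish two things separately: first, that $\theta$ is a congruence of $\A$; second, that $\A/\theta$ satisfies every quasiequation valid in $\mathsf{K}$, hence belongs to $\mathsf{K}$ by Maltsev's Theorem. The nonemptiness assumption is used only to guarantee that $\theta$ is reflexive (pick any $i \in I$; since $\theta_i$ contains the diagonal, so does $\theta$).

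For the first point, symmetry of $\theta$ is immediate. For transitivity, given $\langle a,b\rangle \in \theta_i$ and $\langle b,c\rangle \in \theta_j$, the chain property implies $\theta_i \subseteq \theta_j$ or $\theta_j \subseteq \theta_i$, so both pairs lie in a common $\theta_k$, and transitivity there gives $\langle a,c\rangle \in \theta_k \subseteq \theta$. Compatibility with the basic operations is handled identically: finitely many related pairs can be gathered into a single $\theta_k$ of the chain, where compatibility is already known.

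For the second point, fix a quasiequation
\[
\varphi \colon s_1 \thickapprox t_1 \wedge \dots \wedge s_n \thickapprox t_n \,\to\, s \thickapprox t
\]
valid in $\mathsf{K}$, together with elements $\bar a$ of $\A$ such that $\langle s_j(\bar a), t_j(\bar a)\rangle \in \theta$ for each $j \leq n$. By definition of $\theta$, each of these finitely many pairs lies in some $\theta_{i_j}$, and the chain property yields a single index $k \in I$ with $\theta_{i_1}, \dots, \theta_{i_n} \subseteq \theta_k$. Since $\A/\theta_k \in \mathsf{K}$ satisfies $\varphi$, we conclude $\langle s(\bar a), t(\bar a)\rangle \in \theta_k \subseteq \theta$. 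Thus $\A/\theta \vDash \varphi$, and as $\varphi$ was an arbitrary quasiequation valid in $\mathsf{K}$, we obtain $\A/\theta \in \mathsf{K}$, so $\theta \in \mathsf{Con}_\mathsf{K}(\A)$.

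There is no real obstacle here: the only subtlety is that $\mathsf{K}$-congruences are not in general closed under arbitrary unions (they form only an algebraic lattice, not a complete sublattice of $\mathsf{Con}(\A)$), which is precisely why the chain hypothesis is needed to collapse finitely many premises of a quasiequation into a single member of the chain.
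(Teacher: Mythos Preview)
The paper states this proposition as a ``well-known fact'' and gives no proof. Your argument is correct and is exactly the standard one: the chain hypothesis lets you collapse the finitely many premises of any quasiequation (and the finitely many arguments of any basic operation) into a single $\theta_k$, where membership in $\mathsf{K}$ does the work.
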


When $\A \leq \B$ and $\theta \in \mathsf{Con}_\mathsf{K}(\B)$, we denote the subalgebra of $\B / \theta$ with universe $\{ a / \theta : a \in A \}$ by $\A / \theta$.

\begin{Proposition}\label{Prop : full quotients}
Let $\mathsf{K}$ be a quasivariety, $\B \in \mathsf{K}$, and $\A \leq \B$ proper and almost total. Then there exists $\theta \in \mathsf{Con}_\mathsf{K}(\B)$ such that $\A / \theta \leq \B / \theta$ is full in $\mathsf{K}$.
\end{Proposition}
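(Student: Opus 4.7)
The plan is to fix an almost-total generator and then use Zorn's Lemma to choose $\theta$ maximal among $\mathsf{K}$-congruences that keep this generator outside of the image of $A$. Full details follow.

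Since $\A \leq \B$ is almost total, pick $b \in B$ with $B = \textup{Sg}^\B(A \cup \{b\})$. Since $\A$ is a proper subalgebra, $b \notin A$. Define
\[
\mathcal{F} \coloneqq \{ \psi \in \mathsf{Con}_\mathsf{K}(\B) : \langle a, b \rangle \notin \psi \text{ for every } a \in A \}.
\]
Note that $\textup{id}_B \in \mathcal{F}$, so $\mathcal{F}$ is nonempty. Proposition \ref{Prop : chains of congruences} implies that the union of any nonempty chain in $\mathcal{F}$ is a $\mathsf{K}$-congruence, and it clearly still lies in $\mathcal{F}$. Therefore, Zorn's Lemma yields a maximal element $\theta \in \mathcal{F}$. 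I claim that $\A/\theta \leq \B/\theta$ is full in $\mathsf{K}$.

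First, $\A / \theta \leq \B / \theta$ is proper, because $b/\theta \notin A/\theta$ by the definition of $\mathcal{F}$. Second, $\A / \theta \leq \B / \theta$ is almost total with witness $b / \theta$: applying the quotient homomorphism $\B \to \B/\theta$ to the equality $B = \textup{Sg}^\B(A \cup \{b\})$ yields $B/\theta = \textup{Sg}^{\B/\theta}(A/\theta \cup \{b/\theta\})$. For the third condition, let $\phi \in \mathsf{Con}_\mathsf{K}(\B/\theta)$ with $\phi \neq \textup{id}_{B/\theta}$. By the Correspondence Theorem \ref{Thm : correspondence}, $\phi$ corresponds to a $\mathsf{K}$-congruence $\tilde{\phi} \in \mathsf{Con}_\mathsf{K}(\B)$ with $\tilde{\phi} \supsetneq \theta$. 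By the maximality of $\theta$ in $\mathcal{F}$, we must have $\tilde{\phi} \notin \mathcal{F}$, so there exists $a \in A$ with $\langle a, b \rangle \in \tilde{\phi}$, whence $\langle a / \theta, b / \theta \rangle \in \phi$.

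It remains to upgrade this single element to the required universal statement. Let $q \colon \B/\theta \to (\B/\theta)/\phi$ be the quotient map. Then $q[A/\theta]$ is a subalgebra of $(\B/\theta)/\phi$, and by the previous paragraph $q(b/\theta) = q(a/\theta) \in q[A/\theta]$. Consequently,
\[
q[B/\theta] = q\bigl[\textup{Sg}^{\B/\theta}(A/\theta \cup \{b/\theta\})\bigr] \subseteq \textup{Sg}^{(\B/\theta)/\phi}(q[A/\theta]) = q[A/\theta],
\]
which says exactly that every element of $B/\theta$ is $\phi$-related to some element of $A/\theta$, establishing fullness. The only genuinely nontrivial step is this last one, and the key observation making it work is that the single generator $b$ of $\B$ over $\A$ is the one whose image we control via the choice of $\theta$; everything else is driven back to $b$ by the subuniverse generation.
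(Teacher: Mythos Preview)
Your proof is correct and follows essentially the same approach as the paper: both fix a generator $b$, apply Zorn's Lemma to the set of $\mathsf{K}$-congruences that separate $b$ from $A$, and then use the Correspondence Theorem together with maximality to verify fullness. The only cosmetic difference is in the last step, where the paper picks an explicit term representation $c = \varphi^{\B}(a_1,\dots,a_n,b)$ while you phrase the same idea more succinctly via $q[B/\theta] \subseteq q[A/\theta]$ using that homomorphic images of generated subuniverses are generated by the images.
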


\begin{proof}
As $\A \leq \B$ is proper and almost total, there exists $b \in B - A$ such that $B = \textup{Sg}^\B(A \cup \{ b \})$. Then consider the poset
\[
X \coloneqq \{ \theta \in \mathsf{Con}_\mathsf{K}(\B) : \text{there exists no }a \in A \text{ s.t. } \langle a, b \rangle \in \theta \}
\]
ordered under the inclusion relation. We will apply Zorn's Lemma to obtain a maximal element of $X$. Clearly, $X$ contains $\textup{id}_B$. Furthermore, the definition of $X$ and Proposition \ref{Prop : chains of congruences} guarantee that $X$ is closed under unions of nonempty chains. Therefore, there exists a maximal element $\theta$ of $X$.

From $\theta \in X$ it follows that $b / \theta$ does not belong to $A / \theta$. Therefore, $\A / \theta \leq \B / \theta$ is proper. Moreover, $B = \textup{Sg}^\B(A \cup \{ b \})$ implies that $B / \theta = \textup{Sg}^{\B / \theta}(A / \theta \cup \{ b / \theta \})$. Therefore, $\A / \theta \leq \B / \theta$ is almost total. It only remains to prove that it is full in $\mathsf{K}$.

To this end, consider some $\phi \in \mathsf{Con}_\mathsf{K}(\B / \theta) - \{ \textup{id}_{B/\theta} \}$. By the Correspondence Theorem \ref{Thm : correspondence} there exists $\eta \in \mathsf{Con}_\mathsf{K}(\B)$ such that
\[
\theta \subset \eta \, \, \text{ and } \, \, \phi = \{ \langle a / \theta, c / \theta \rangle : \langle a, c \rangle \in \eta \}.
\] 
Since $\theta$ is a maximal element of $X$, from $\theta \subset \eta$ it follows that $\eta \notin X$.\ Therefore, there exists  $a \in A$ such that $\langle a, b \rangle \in \eta$. In view of the above display, this yields $\langle a/ \theta, b / \theta \rangle \in \phi$. Then consider an arbitrary $c / \theta \in B / \theta$. In order to conclude the proof, we need to find some $c_a / \theta \in A / \theta$ such that $\langle c/ \theta, c_a / \theta \rangle \in \phi$. Since $B = \textup{Sg}^{\B}(A \cup \{ b \})$, there exist a term $\varphi(x_1, \dots, x_n, y)$ and $a_1, \dots, a_n \in A$ such that $c = \varphi^{\B}(a_1, \dots, a_n, b)$. Together with $\langle a/ \theta, b / \theta \rangle \in \phi$, this implies
\[
\langle c / \theta, \varphi^{\B/\theta}(a_1/ \theta, \dots, a_n / \theta, a / \theta) \rangle \in \phi.
\]
As $a, a_1, \dots, a_n \in A$, the element $c_a \coloneqq \varphi^{\B}(a_1, \dots, a_n, a)$ belongs to $A$ and, therefore, $c_a / \theta \in A / \theta$. Lastly, the above display amounts to $\langle c / \theta, c_a / \theta \rangle\in \phi$.
\end{proof}

\begin{Corollary}\label{Cor : fully epic = weak ES}
A quasivariety $\mathsf{K}$ has the weak ES property iff its finitely generated members lack subalgebras that are fully epic in $\mathsf{K}$.
\end{Corollary}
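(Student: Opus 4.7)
My plan is to prove the two implications by combining Proposition \ref{Prop : epic} with Proposition \ref{Prop : full quotients}.

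For the forward direction, suppose $\mathsf{K}$ has the weak ES property and let $\B \in \mathsf{K}$ be finitely generated. By Proposition \ref{Prop : epic}, $\B$ has no proper, almost total, epic subalgebra. Since fully epic subalgebras are by definition proper (this is part of being full), almost total, and epic, $\B$ has no fully epic subalgebra.

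For the backward direction, I will argue the contrapositive, aiming to use Proposition \ref{Prop : epic}. Suppose $\mathsf{K}$ fails the weak ES property. Then, by Proposition \ref{Prop : epic}, there exist a finitely generated $\B \in \mathsf{K}$ and a subalgebra $\A \leq \B$ which is proper, almost total, and epic in $\mathsf{K}$. Apply Proposition \ref{Prop : full quotients} to obtain $\theta \in \mathsf{Con}_\mathsf{K}(\B)$ such that $\A / \theta \leq \B / \theta$ is full in $\mathsf{K}$. The target is to show that this subalgebra is also epic, hence fully epic, inside a finitely generated member of $\mathsf{K}$, contradicting the hypothesis.

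The key remaining step is transferring epicity from $\A \leq \B$ to $\A/\theta \leq \B/\theta$. Let $\pi \colon \B \to \B/\theta$ be the canonical surjection and suppose $g, h \colon \B/\theta \to \C$ with $\C \in \mathsf{K}$ agree on $A/\theta$. Then $g \circ \pi$ and $h \circ \pi$ are homomorphisms from $\B$ to $\C$ that agree on $A$, so epicity of $\A \leq \B$ yields $g \circ \pi = h \circ \pi$; surjectivity of $\pi$ then forces $g = h$. Finally, $\B/\theta \in \mathsf{K}$ since $\theta$ is a $\mathsf{K}$-congruence, and $\B/\theta$ is finitely generated as a homomorphic image of the finitely generated algebra $\B$ (if $G$ generates $\B$, then $\{g/\theta : g \in G\}$ generates $\B/\theta$). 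Hence $\A/\theta \leq \B/\theta$ is fully epic in $\mathsf{K}$ inside a finitely generated member, the desired contradiction.

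No step here is a real obstacle: the heavy lifting (namely, producing a full quotient) is done by Proposition \ref{Prop : full quotients}, and the epic property is preserved under taking quotients of the ambient algebra in a straightforward way. The only subtlety to keep in mind is that ``fully epic'' implicitly bundles in the ``proper'' and ``almost total'' clauses through the definition of full, so one must appeal to Proposition \ref{Prop : full quotients} rather than trying to pass the original $\A \leq \B$ directly.
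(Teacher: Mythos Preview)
Your proof is correct and follows essentially the same route as the paper: both directions rest on Proposition~\ref{Prop : epic}, and for the contrapositive of the backward direction you apply Proposition~\ref{Prop : full quotients} and then transfer epicity to the quotient. You spell out the epicity-transfer step and the fact that $\B/\theta$ is finitely generated and lies in $\mathsf{K}$ more explicitly than the paper does, but the argument is the same.
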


\begin{proof}
The implication from left to right follows from Proposition \ref{Prop : epic}. To prove the other implication, suppose by contraposition that $\mathsf{K}$ lacks the weak ES property. By Proposition \ref{Prop : epic} there exist $\B \in \mathsf{K}$ finitely generated and $\A \leq \B$ proper, almost total, and epic in $\mathsf{K}$. Therefore, we can apply Proposition \ref{Prop : full quotients} obtaining  $\theta \in \mathsf{Con}_\mathsf{K}(\B)$ such that $\A / \theta \leq \B / \theta$ is full in $\mathsf{K}$. Furthermore, from the assumption that $\A \leq \B$ is epic in $\mathsf{K}$ it follows that so is $\A / \theta \leq \B / \theta$. 
\end{proof}

We will also make use of the next technical observation:

\begin{Lemma}\label{Lem : technical phi theta separation}
Let $\mathsf{K}$ be a quasivariety, $\B \in \mathsf{K}$, and $\A \leq \B$ full in $\mathsf{K}$.  If $\theta, \phi \in \mathsf{Con}_\mathsf{K}(\B)$ are such that $\theta \ne \phi$ and $\theta{\upharpoonright}_A = \phi{\upharpoonright}_A$, then $\theta \cap \phi = \textup{id}_B$.
\end{Lemma}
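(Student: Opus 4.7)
The plan is to argue by contradiction: suppose $\theta \cap \phi \neq \textup{id}_B$ and use fullness to propagate the agreement of $\theta$ and $\phi$ on $A$ to all of $B$, thereby forcing $\theta = \phi$.

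First I would use that $\A \leq \B$ is almost total to fix some $b \in B$ with $B = \textup{Sg}^\B(A \cup \{b\})$. Since $\theta \cap \phi$ is the meet (and hence the intersection) of two elements of $\mathsf{Con}_\mathsf{K}(\B)$, it is a $\mathsf{K}$-congruence of $\B$, and by hypothesis it is nonidentity. Fullness of $\A$ in $\mathsf{K}$, applied to this congruence and to the element $b$, then produces some $a \in A$ with $\langle a, b \rangle \in \theta \cap \phi$.

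Next I would show $\theta \subseteq \phi$ (the reverse inclusion being symmetric). Take any $\langle c, d \rangle \in \theta$. Using $B = \textup{Sg}^\B(A \cup \{b\})$, write $c = \varphi^\B(a_1,\dots,a_n,b)$ and $d = \psi^\B(a'_1,\dots,a'_m,b)$ with $a_i, a'_j \in A$, and set
\[
c' \coloneqq \varphi^\B(a_1,\dots,a_n,a), \qquad d' \coloneqq \psi^\B(a'_1,\dots,a'_m,a),
\]
which both lie in $A$. From $\langle a, b\rangle \in \theta \cap \phi$ and the compatibility of both congruences with the operations, we get $\langle c,c'\rangle, \langle d,d'\rangle \in \theta \cap \phi$. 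Combined with $\langle c, d\rangle \in \theta$, this gives $\langle c', d'\rangle \in \theta$, and since $c', d' \in A$ this means $\langle c', d' \rangle \in \theta{\upharpoonright}_A = \phi{\upharpoonright}_A \subseteq \phi$. Transitivity of $\phi$ then yields $\langle c, d\rangle \in \phi$, as desired.

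There is no real obstacle here: the argument is essentially a straightforward ``substitute $a$ for $b$'' trick, made possible precisely by the combination of almost totality (giving the single generator $b$) and fullness (giving an $A$-element close to $b$ modulo any nonidentity $\mathsf{K}$-congruence). The only point to be careful about is that $\theta \cap \phi$ remains a $\mathsf{K}$-congruence, but this is immediate from the fact that meets in $\mathsf{Con}_\mathsf{K}(\B)$ are intersections.
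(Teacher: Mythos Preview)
Your proof is correct and follows the same contradiction strategy as the paper: assume $\theta\cap\phi\neq\textup{id}_B$, use fullness with respect to this nonidentity $\mathsf{K}$-congruence to pull elements of $B$ back to $A$, and then transfer the equality $\theta{\upharpoonright}_A=\phi{\upharpoonright}_A$ to all of $B$.

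The only noteworthy difference is that your detour through almost totality and term representations is unnecessary. The definition of fullness already guarantees that for \emph{every} $b\in B$ (not just the distinguished generator) there is some $a\in A$ with $\langle a,b\rangle\in\theta\cap\phi$. The paper exploits this directly: given arbitrary $b_1,b_2\in B$, it picks $a_1,a_2\in A$ with $\langle a_i,b_i\rangle\in\theta\cap\phi$ and observes that $\langle b_1,b_2\rangle\in\theta\iff\langle a_1,a_2\rangle\in\theta\iff\langle a_1,a_2\rangle\in\phi\iff\langle b_1,b_2\rangle\in\phi$. This avoids writing elements as terms in $A\cup\{b\}$ altogether. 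Your approach buys nothing extra; it simply reproves, via the generator $b$ and substitution, what fullness hands you outright.
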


\begin{proof}
Assume that $\theta, \phi \in \mathsf{Con}_\mathsf{K}(\B)$ are such that $\theta \ne \phi$ and $\theta{\upharpoonright}_A = \phi{\upharpoonright}_A$.\ Suppose, with a view to contradiction, that $\theta \cap \phi \ne \textup{id}_B$. Since $\A \leq \B$ is full in $\mathsf{K}$, for every $b_1,b_2 \in B$ there exist $a_1,a_2 \in A$ such that $\langle a_1, b_1 \rangle, \langle a_2, b_2 \rangle \in \theta \cap \phi$. Then $\langle b_1, b_2 \rangle \in \theta$ if and only if $\langle a_1, a_2 \rangle \in \theta$, and  $\langle b_1, b_2 \rangle \in \phi$ if and only if $\langle a_1, a_2 \rangle \in \phi$. From $\theta{\upharpoonright}_A = \phi{\upharpoonright}_A$ it follows that $\langle a_1, a_2 \rangle \in \theta$ if and only if $\langle a_1, a_2 \rangle \in \phi$. Thus, $\langle b_1, b_2 \rangle \in \theta$ if and only if $\langle b_1, b_2 \rangle \in \phi$. But this implies $\theta = \phi$, which contradicts our assumption.
\end{proof}

The heart of the proof of Theorem \ref{Thm : weak ES vs fully epic} is the next observation:

\begin{Proposition}\label{Prop : weak ES vs fully epic}
Let $\mathsf{K}$ be a quasivariety, $\B \in \mathsf{K}$, and $\A \leq \B$ full in $\mathsf{K}$. Then $\A \leq \B$ is not epic in $\mathsf{K}$ iff one of the conditions in the statement of Theorem \ref{Thm : weak ES vs fully epic} holds. Furthermore, if condition (\ref{weak ES vs fully epic : item : 1}) holds, we may assume that $\theta = \textup{id}_B$. 
\end{Proposition}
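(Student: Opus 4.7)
The plan is to prove the biconditional by handling each direction separately, using the Subdirect Decomposition Theorem to reach $\mathsf{K}_{\textsc{rsi}}$, the fullness of $\A$ in $\B$ to control nontrivial congruences, and Lemma \ref{Lem : technical phi theta separation} to strengthen condition (\ref{weak ES vs fully epic : item : 1}) to the case $\theta = \textup{id}_B$.

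For the forward direction, suppose $\A \leq \B$ is not epic in $\mathsf{K}$, so there exist $\C \in \mathsf{K}$ and distinct $g, h \colon \B \to \C$ with $g{\upharpoonright}_A = h{\upharpoonright}_A$. First I would apply the Subdirect Decomposition Theorem \ref{Thm : Subdirect Decomposition} to $\C$, obtaining a subdirect embedding $\C \hookrightarrow \prod_{i \in I} \D_i$ with each $\D_i \in \mathsf{K}_{\textsc{rsi}}$. Composing with $g$ and $h$ and using that $g \neq h$, some factor $\D_{i_0}$ yields distinct maps $g', h' \colon \B \to \D_{i_0}$ that still agree on $A$. Set $\theta \coloneqq \mathsf{Ker}(g')$ and $\phi \coloneqq \mathsf{Ker}(h')$; both lie in $\mathsf{Con}_\mathsf{K}(\B)$ and satisfy $\theta{\upharpoonright}_A = \phi{\upharpoonright}_A$ (since $g'{\upharpoonright}_A = h'{\upharpoonright}_A$). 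If $\theta \neq \phi$, condition (\ref{weak ES vs fully epic : item : 1}) holds. If $\theta = \phi = \psi$, then $g'$ and $h'$ factor through $\pi_\psi$ as distinct embeddings $\bar{g}, \bar{h} \colon \B/\psi \to \D_{i_0}$ that agree on $\pi_\psi(A)$. Assuming $\psi \neq \textup{id}_B$, fullness of $\A$ in $\B$ yields $\pi_\psi(A) = \B/\psi$, forcing $\bar{g} = \bar{h}$ and hence $g' = h'$, contradicting our choice of $i_0$. Thus $\psi = \textup{id}_B$, and $g', h'$ are distinct embeddings of $\B$ into $\D_{i_0} \in \mathsf{K}_{\textsc{rsi}}$ agreeing on $A$, which is condition (\ref{weak ES vs fully epic : item : 2}).

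For the reverse direction, condition (\ref{weak ES vs fully epic : item : 2}) witnesses non-epicness trivially because $\mathsf{K}_{\textsc{rsi}} \subseteq \mathsf{K}$. For condition (\ref{weak ES vs fully epic : item : 1}), I would first prove the ``moreover'' clause: given distinct $\theta, \phi \in \mathsf{Con}_\mathsf{K}(\B)$ with $\theta{\upharpoonright}_A = \phi{\upharpoonright}_A$, Lemma \ref{Lem : technical phi theta separation} gives $\theta \cap \phi = \textup{id}_B$; restricting to $A \times A$ then yields $\theta{\upharpoonright}_A = \phi{\upharpoonright}_A = \textup{id}_A$, so we may replace the pair by $(\textup{id}_B, \phi)$ whenever $\phi \neq \textup{id}_B$ (and by $(\textup{id}_B, \theta)$ otherwise). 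Now assume $\theta = \textup{id}_B$ and $\phi \neq \textup{id}_B$ with $\phi{\upharpoonright}_A = \textup{id}_A$. Then $\pi_\phi{\upharpoonright}_A \colon \A \to \B/\phi$ is injective (by $\phi{\upharpoonright}_A = \textup{id}_A$) and surjective (by fullness, which gives $\pi_\phi(A) = \B/\phi$), hence an isomorphism. Letting $s$ be its inverse and $\iota_A \colon \A \to \B$ the inclusion, define $h \coloneqq \iota_A \circ s \circ \pi_\phi \colon \B \to \B$. Then $h{\upharpoonright}_A = \textup{id}_A$, but the image of $h$ lies in $A \subsetneq B$, so $h \neq \textup{id}_\B$. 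Therefore $\textup{id}_\B$ and $h$ are two distinct homomorphisms $\B \to \B$ in $\mathsf{K}$ agreeing on $A$, witnessing that $\A \leq \B$ is not epic.

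The main obstacle is the kernel-case analysis in the forward direction, where ruling out the subcase $\theta = \phi \neq \textup{id}_B$ is precisely where fullness of $\A$ in $\B$ becomes essential. Once that is handled, the retraction construction in the reverse direction follows naturally, as fullness combined with $\phi{\upharpoonright}_A = \textup{id}_A$ forces $\pi_\phi$ to split through $\A$.
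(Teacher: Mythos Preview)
Your proof is correct and follows essentially the same route as the paper's: reduce to a target in $\mathsf{K}_{\textsc{rsi}}$ via the Subdirect Decomposition Theorem, use fullness to handle the kernel case-split in the forward direction, and for the converse invoke Lemma \ref{Lem : technical phi theta separation} before constructing the retraction onto $\A$. The only cosmetic difference is that you split the forward argument on whether the two kernels coincide (ruling out a nonidentity common kernel via fullness), whereas the paper splits on whether both maps are embeddings (using fullness to separate the kernels when one is not); your retraction $\iota_A \circ (\pi_\phi{\upharpoonright}_A)^{-1} \circ \pi_\phi$ is exactly the paper's map $b \mapsto a_b$.
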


\begin{proof}
Suppose first that $\A \leq \B$ is not epic in $\mathsf{K}$. Therefore, there exist $\C \in \mathsf{K}$ and two homomorphisms $g, h \colon \B \to \C$ such that $g{\upharpoonright}_A = h{\upharpoonright}_A$ and $g \ne h$. As $g \ne h$ there exists also $b \in B$ such that $g(b) \ne h(b)$. By the Subdirect Decomposition Theorem \ref{Thm : Subdirect Decomposition} there exist $\D \in \mathsf{K}_\textsc{rsi}$ and a homomorphism $f \colon \C \to \D$ such that $f(g(b)) \ne f(h(b))$. Thus, $f\circ g$ and $f \circ h$ differ, but their restrictions to $A$ coincide. Consequently, we may assume that $\C \in \mathsf{K}_\textsc{rsi}$ (otherwise, we replace $\C$ by $\D$ and $h$ and $g$ by their compositions with $f$).

If both $g$ and $h$ are injective, then condition (\ref{weak ES vs fully epic : item : 2}) of Theorem \ref{Thm : weak ES vs fully epic} holds.\ Then we consider the case where one of them is not. By symmetry we may assume that $g$ is not injective, that is, $\mathsf{Ker}(g) \ne \textup{id}_{B}$.\ We will prove that condition (\ref{weak ES vs fully epic : item : 1}) of Theorem \ref{Thm : weak ES vs fully epic} holds.\  Clearly, we have $\mathsf{Ker}(g), \mathsf{Ker}(h) \in \mathsf{Con}_\mathsf{K}(\B)$. Therefore, it suffices to show that
\[
\mathsf{Ker}(g){\upharpoonright}_A = \mathsf{Ker}(h){\upharpoonright}_A \, \, \text{ and } \, \, \mathsf{Ker}(g) \ne \mathsf{Ker}(h).
\]

The first half of the above display holds because $g{\upharpoonright}_A = h{\upharpoonright}_A$.\ To prove the second, recall that $\A \leq \B$ is full in $\mathsf{K}$ and that $\mathsf{Ker}(g) \ne \textup{id}_{B}$. Therefore, there exists $a \in A$ such that $\langle a, b \rangle \in \mathsf{Ker}(g)$. Together with $g{\upharpoonright}_A = h{\upharpoonright}_A$, this yields
\[
g(b) = g(a) = h(a).
\]
Since $g(b) \ne h(b)$, this implies $h(a) \ne h(b)$. Thus, $\langle a, b \rangle \notin \mathsf{Ker}(h)$. Hence, we conclude that $\mathsf{Ker}(g) \ne \mathsf{Ker}(h)$.

Now, we prove the converse. If condition (\ref{weak ES vs fully epic : item : 2}) holds, then it is clear that $\A \leq \B$ is not epic in $\mathsf{K}$. Then suppose that condition (\ref{weak ES vs fully epic : item : 1}) holds. In this case, there are two distinct $\theta, \phi \in \mathsf{Con}_\mathsf{K}(\B)$ such that $\theta{\upharpoonright}_A = \phi{\upharpoonright}_A$. Then 
Lemma~\ref{Lem : technical phi theta separation} yields $\theta \cap \phi = \textup{id}_B$. As $\theta \ne \phi$, we have $\theta \nsubseteq \phi$ or $\phi \nsubseteq \theta$. By symmetry, we may assume that $\phi \nsubseteq \theta$, and hence that $\phi \ne \theta \cap \phi$. 
From $\phi{\upharpoonright}_A = \theta{\upharpoonright}_A$ it follows that $\phi{\upharpoonright}_A = \theta{\upharpoonright}_A \cap \phi{\upharpoonright}_A = (\theta \cap \phi){\upharpoonright}_A$.
Thus, replacing $\theta$ with $\theta \cap \phi$ allows us to assume that $\theta = \textup{id}_B$, $\phi \ne \textup{id}_B$, and  
$\phi{\upharpoonright}_A=(\textup{id}_B){\upharpoonright}_A$.

Since $\A \leq \B$ is full in $\mathsf{K}$, $\phi \ne \textup{id}_B$, and $\phi{\upharpoonright}_A=(\textup{id}_B){\upharpoonright}_A=\textup{id}_A$,  for every $b \in B$ there exists a unique $a_b \in A$ such that $\langle a_b, b \rangle \in \phi$. We will prove that the map $g \colon \B \to \B$ defined by the rule $g(b) \coloneqq a_b$ is a homomorphism. To this end, let $f$ be a basic $n$-ary operation and $b_1, \dots, b_n \in B$.  From $\langle g(b_1), b_1 \rangle, \dots, \langle g(b_n), b_n \rangle \in \phi$ it follows $\langle f^{\B}(g(b_1), \dots, g(b_n)), f^{\B}(b_1, \dots, b_n)\rangle \in \phi$. As $f^{\B}(g(b_1), \dots, g(b_n))$ belongs to $A$, it is the unique element of $A$ which is related to $f^{\B}(b_1, \dots, b_n)$ by $\phi$. Therefore, the definition of $g$ gives
\begin{equation*}
g(f^\B(b_1, \dots, b_n)) = f^{\B}(g(b_1), \dots, g(b_n)).
\end{equation*}
Thus, $g$ is a homomorphism. The definition of $g$ implies $g{\upharpoonright}_A=i{\upharpoonright}_A$, where $i \colon \B \to \B$ is the identity map.
 Moreover, $g \ne i$ because 
$g[\B]=\A$
and $\A \leq \B$ is proper.  Therefore, $\A \leq \B$ is not epic in $\mathsf{K}$.
\end{proof}

Theorem \ref{Thm : weak ES vs fully epic} is now an immediate consequence of Corollary \ref{Cor : fully epic = weak ES} and Proposition \ref{Prop : weak ES vs fully epic}.

\begin{Corollary}\label{Cor : A RFSI}
Let $\mathsf{K}$ be a quasivariety. If $\B \in \mathsf{K}$ and $\A \leq \B$ is full and not epic in $\mathsf{K}$, then $\A \in \SSS(\mathsf{K}_{\textup{\textsc{rsi}}})$.\end{Corollary}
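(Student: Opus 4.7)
The plan is to bypass the abstract conditions (1) and (2) of Proposition \ref{Prop : weak ES vs fully epic} and work directly from the hypothesis that $\A \leq \B$ is full and not epic in $\mathsf{K}$. From non-epicity I obtain $\C \in \mathsf{K}$ and distinct homomorphisms $g, h \colon \B \to \C$ with $g{\upharpoonright}_A = h{\upharpoonright}_A$. Following the opening move of the proof of Proposition \ref{Prop : weak ES vs fully epic}, I pick $b \in B$ with $g(b) \ne h(b)$, apply the Subdirect Decomposition Theorem \ref{Thm : Subdirect Decomposition} to produce a homomorphism $f \colon \C \to \D$ with $\D \in \mathsf{K}_{\textup{\textsc{rsi}}}$ separating $g(b)$ from $h(b)$, and replace $g, h$ by $f \circ g, f \circ h$. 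This lets me assume $\C \in \mathsf{K}_{\textup{\textsc{rsi}}}$ from the outset.

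It then suffices to show that $g{\upharpoonright}_A \colon \A \to \C$ is injective: its image will be a subalgebra of $\C \in \mathsf{K}_{\textup{\textsc{rsi}}}$ isomorphic to $\A$, witnessing $\A \in \SSS(\mathsf{K}_{\textup{\textsc{rsi}}})$. Set $\phi \coloneqq \mathsf{Ker}(g)$ and $\psi \coloneqq \mathsf{Ker}(h)$; the equality $g{\upharpoonright}_A = h{\upharpoonright}_A$ yields $\phi{\upharpoonright}_A = \psi{\upharpoonright}_A$. If $\phi = \textup{id}_B$, then $g$ itself is already an embedding and there is nothing more to show, so I may assume $\phi \ne \textup{id}_B$.

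The crux, where the fullness hypothesis is indispensable, is establishing $\phi \ne \psi$. If $\phi = \psi$, then $g$ and $h$ factor through the common projection $\pi \colon \B \to \B/\phi$ as $\tilde g, \tilde h \colon \B/\phi \to \C$, and $g{\upharpoonright}_A = h{\upharpoonright}_A$ forces these to agree on $\{ a/\phi : a \in A \}$; but fullness applied to the nonidentity congruence $\phi$ says every $b \in B$ is $\phi$-related to some $a \in A$, so $\{ a/\phi : a \in A \} = B/\phi$. Hence $\tilde g = \tilde h$ and $g = h$, a contradiction. With $\phi \ne \psi$ and $\phi{\upharpoonright}_A = \psi{\upharpoonright}_A$ in hand, Lemma \ref{Lem : technical phi theta separation} supplies $\phi \cap \psi = \textup{id}_B$; restricting to $A$ gives $\phi{\upharpoonright}_A = \phi{\upharpoonright}_A \cap \psi{\upharpoonright}_A = (\phi \cap \psi){\upharpoonright}_A = \textup{id}_A$, so $\mathsf{Ker}(g{\upharpoonright}_A) = \textup{id}_A$ as needed. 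I expect the $\phi \ne \psi$ step to be the only real obstacle; everything else is a routine combination of the Subdirect Decomposition Theorem with Lemma \ref{Lem : technical phi theta separation}.
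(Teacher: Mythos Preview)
Your proposal is correct and follows essentially the same route as the paper: reduce to $\C \in \mathsf{K}_{\textup{\textsc{rsi}}}$ via the Subdirect Decomposition Theorem, then show $g{\upharpoonright}_A$ is injective by combining $\phi{\upharpoonright}_A = \psi{\upharpoonright}_A$, $\phi \ne \psi$, and Lemma~\ref{Lem : technical phi theta separation}. The only cosmetic difference is in the $\phi \ne \psi$ step: the paper (inside the proof of Proposition~\ref{Prop : weak ES vs fully epic}) exhibits a concrete pair $\langle a, b \rangle \in \mathsf{Ker}(g) \setminus \mathsf{Ker}(h)$, whereas you argue that $\phi = \psi$ would force $\tilde g = \tilde h$ on $\B/\phi = \A/\phi$; both are immediate from fullness.
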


\begin{proof}
As $\A \leq \B$ is not epic in $\mathsf{K}$, the proof of the implication from left to right of Proposition \ref{Prop : weak ES vs fully epic} shows   that there exist $\C \in \mathsf{K}_{\textsc{rsi}}$ and two homomorphisms $g, h \colon \B \to \C$ such that one of the following conditions holds:
\benroman
\item\label{cor : item : injective : 1} $\mathsf{Ker}(g) \ne \mathsf{Ker}(h)$ and $\mathsf{Ker}(g){\upharpoonright}_A = \mathsf{Ker}(h){\upharpoonright}_A$;
\item\label{cor : item : injective : 2} The maps $g$ and $h$ are distinct embeddings.
\eroman

We will prove that in both cases, $g{\upharpoonright}_A$ is injective.\ In case (\ref{cor : item : injective : 2}) this is clear. Then we consider case (\ref{cor : item : injective : 1}). By Lemma \ref{Lem : technical phi theta separation} we have that
$\textup{id}_B = \mathsf{Ker}(g) \cap \mathsf{Ker}(h)$. Together with $\mathsf{Ker}(g){\upharpoonright}_A = \mathsf{Ker}(h){\upharpoonright}_A$, this yields
\[
\textup{id}_A =(\textup{id}_B){\upharpoonright}_A = (\mathsf{Ker}(g) \cap \mathsf{Ker}(h)){\upharpoonright}_A = \mathsf{Ker}(g){\upharpoonright}_A \cap \mathsf{Ker}(h){\upharpoonright}_A = \mathsf{Ker}(g){\upharpoonright}_A.
\]
Therefore, $g{\upharpoonright}_A$ is injective as desired. As a consequence, $g{\upharpoonright}_A \colon \A \to \C$ is an embedding. Since $\C \in \mathsf{K}_{\textsc{rsi}}$, we conclude that  $\A \in \III\SSS(\mathsf{K}_{\textup{\textsc{rsi}}}) = \SSS\III(\mathsf{K}_{\textup{\textsc{rsi}}}) = \SSS(\mathsf{K}_{\textup{\textsc{rsi}}})$ because $\mathsf{K}_{\textup{\textsc{rsi}}}$ is closed under isomorphisms.
\end{proof}

 As we mentioned, every variety of Heyting algebras has the weak ES property. The proof of this fact \cite[Thm.\ 1]{Kreisel60JSL} establishes the logical counterpart of this property (namely, the Beth definability property for intermediate logics).\ A simple algebraic proof can be derived from Theorem \ref{Thm : weak ES vs fully epic} as we proceed to illustrate.

\begin{exa}\label{Exa : Kreisel}
The $\langle \land, \to \rangle$-subreducts of Heyting algebras are called \emph{implicative semilattices} 
\cite{Ho62} (see also \cite{Kh81}).
We will prove that varieties of Heyting algebras and of implicative semilattices have the weak ES property. To this end, we recall that the lattice of filters of a Heyting algebra (resp.\ implicative semilattice) $\A$ is isomorphic to that of its congruences  under the map that takes a filter $F$ to the congruence
\[
\theta_F \coloneqq \{ \langle a, b \rangle \in A \times A : a \to b, b \to a \in F \}
\]
(see, e.g., \cite[Prop.\ 2.4.9(b)]{Esakia-book85} and \cite[p.\ 106]{Kh81}).

Now, let $\mathsf{K}$ be a variety of Heyting algebras or of implicative semilattices. By Theorem \ref{Thm : weak ES vs fully epic}, in order to prove that $\mathsf{K}$ has the weak ES property, it suffices to show that for every $\B \in \mathsf{K}$ and proper $\A \leq \B$ there exist two distinct $\theta, \phi \in \mathsf{Con}(\B)$ such that $\theta{\upharpoonright}_A = \phi{\upharpoonright}_A$.\footnote{In principle, we may also assume that $\A \leq \B$ is full in $\mathsf{K}$ and that $\B$ is finitely generated, but none of these assumptions will be needed to establish the desired conclusion.} Consider $\B \in \mathsf{K}$ and $\A \leq \B$ proper. Let $b \in B - A$ and consider the following filters of $\B$:
\[
F \coloneqq \{ a \in B : b \leq a \} \, \, \text{ and } \, \, G \coloneqq \{ c \in B : \text{there exists }a \in A \text{ s.t. }b \leq a \leq c \}.
\]
As $b \notin A$, we have $b \in F - G$ and, therefore, $F \ne G$. On the other hand, $F \cap A = G \cap A$ by the definition of $F$ and $G$. From $F \ne G$ it follows $\theta_F \ne \theta_G$. Since $\A$ is a subalgebra of $\B$, we have
\begin{align*}
\theta_F{\upharpoonright}_{A} &= \{ \langle a, c \rangle \in A \times A : a \to c, c \to a \in F \cap A \};\\
\theta_G{\upharpoonright}_{A} &= \{ \langle a, c \rangle \in A \times A : a \to c, c \to a \in G \cap A \}.
\end{align*}
Together with $F \cap A = G \cap A$, this yields $\theta_F{\upharpoonright}_{A} = \theta_G{\upharpoonright}_{A}$.
\qed
\end{exa}

\begin{Remark}
Corollary \ref{Cor : A RFSI} cannot be strengthened by concluding that also $\B \in \oper{S}(\mathsf{K}_{\textup{\textsc{rsi}}})$, nor even that $\B \in \oper{S}(\mathsf{K}_{\textup{\textsc{rfsi}}})$. To prove this,
let $\B$ be the semilattice with 
meet-order $0 < a < 1$ and $\A \leq \B$ its subalgebra with universe $\{ 0, 1 \}$.
While $\A \leq \B$ is easily seen to be full in the variety $\mathsf{SL}$ of semilattices, it is not epic because $\mathsf{SL}$ has the weak ES property (see, e.g., \cite[p.\ 99]{KMPT83}). Nonetheless, $\B$ is not a subalgebra of an FSI semilattice because, up to isomorphism, the only FSI semilattice is $\A$.
\qed
\end{Remark}

\section{Quasivarieties with a near unanimity term}

A term $\varphi$ of arity $ \geq 3$ is a \emph{near unanimity term} for a class $\mathsf{K}$ of algebras if
\[
\mathsf{K} \vDash x \thickapprox \varphi(y, x, \dots, x) \thickapprox \varphi(x, y, x, \dots, x) \thickapprox \dots \thickapprox \varphi(x, \dots, x, y).
\]
Ternary near unanimity terms play a prominent role in algebra and are known as \emph{majority terms}. For instance, $(x \land y) \lor (x \land z) \lor (y \land z)$ is a majority term for every class of algebras with a lattice reduct. 

A useful feature of classes with a near unanimity term is the following \cite[Thm.\ 2]{Mitschke78}:

\begin{Theorem}\label{Thm : NUT implies CD}
Let  $\mathsf{K}$ be a class of algebras with a near unanimity term. Then $\VVV(\mathsf{K})$ is congruence distributive.
\end{Theorem}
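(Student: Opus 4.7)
The plan is to prove the theorem by exhibiting Jónsson terms for $\V(\mathsf{K})$ constructed from the near unanimity term $\varphi$, and then invoking Jónsson's classical characterization which states that a variety is congruence distributive if and only if it admits a finite sequence of ternary terms $t_0, t_1, \ldots, t_m$ satisfying: $t_0(x,y,z) \thickapprox x$, $t_m(x,y,z) \thickapprox z$, $t_i(x,y,x) \thickapprox x$ for all $i$, $t_i(x,x,z) \thickapprox t_{i+1}(x,x,z)$ for even $i$, and $t_i(x,y,y) \thickapprox t_{i+1}(x,y,y)$ for odd $i$. Since both the near unanimity identities and the Jónsson identities are equations, anything satisfied by $\mathsf{K}$ is automatically satisfied by $\V(\mathsf{K})$, so it suffices to exhibit Jónsson terms over $\mathsf{K}$.

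Concretely, I would first treat the illustrative case of a majority term ($n = 3$). Here the choice $t_0(x,y,z) \coloneqq x$, $t_1(x,y,z) \coloneqq \varphi(x,y,z)$, $t_2(x,y,z) \coloneqq z$ works at once: $t_1(x,y,x) \thickapprox \varphi(x,y,x) \thickapprox x$, $t_0(x,x,z) \thickapprox x \thickapprox \varphi(x,x,z) \thickapprox t_1(x,x,z)$, and $t_1(x,y,y) \thickapprox \varphi(x,y,y) \thickapprox y \thickapprox t_2(x,y,y)$, each of which is an instance of the near unanimity identities. For general $n$, I would define a longer sequence of terms that progressively transforms an all-$x$ tuple into an all-$z$ tuple via tuples containing a single $y$ or a single $z$, so that $\varphi$ applied to any such tuple evaluates to $x$ by near unanimity, ensuring $t_i(x,y,x) \thickapprox x$, while adjacent terms are engineered to agree on the substitutions $(x,x,z)$ and $(x,y,y)$ by arranging that the extra coordinate collapses to $x$ or $y$, respectively, again by near unanimity.

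The main obstacle is the combinatorial design of the intermediate terms for arbitrary $n$: one must specify, position by position, the pattern of $x$'s, $y$'s, and $z$'s that appear as arguments of $\varphi$ so that the Jónsson identities chain correctly. This is a bookkeeping exercise rather than a conceptual difficulty, but it is where all the work lies; once the pattern is written down, each identity reduces to a single application of one of the near unanimity equations. Having produced the required sequence $t_0, \ldots, t_m$ over $\mathsf{K}$, and hence over $\V(\mathsf{K})$, Jónsson's theorem delivers the congruence distributivity of $\V(\mathsf{K})$.
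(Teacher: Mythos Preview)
The paper does not supply its own proof of this theorem: it is stated with a citation to Mitschke's 1978 paper and used as a black box. Your approach via J\'onsson terms is correct and is essentially the classical argument (indeed, Mitschke's original proof proceeds exactly this way). Your verification of the majority case $n=3$ is complete and accurate; for general $n$ you correctly identify that the remaining work is combinatorial bookkeeping in writing down the intermediate terms, and your description of the strategy---sliding from an all-$x$ tuple to an all-$z$ tuple through tuples with a single exceptional entry so that each near unanimity identity fires---is the right one.
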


\noindent However, the converse of Theorem \ref{Thm : NUT implies CD} is not true in general because there are varieties that are congruence distributive and lack a near unanimity term (see, e.g., Example \ref{Exa : implication algebras}). Furthermore,
even when the class $\mathsf{K}$ possesses a near unanimity term, the quasivariety $\QQQ(\mathsf{K})$ need not be congruence distributive.

In the presence of a near unanimity term, the task of determining whether a quasivariety has the ES property is simplified by the following result \cite[Thm.\ 6.4]{Camper18jsl}:\footnote{In the statement of \cite[Thm.\ 6.4]{Camper18jsl}, the occurrences of $n-1$ in Theorem \ref{Thm : Campercholi unanimity} are replaced by $n$. However, its proof yields the stronger version reported here (see \cite{Kurtzhals2024}).}

\begin{Theorem}\label{Thm : Campercholi unanimity}
Let $\mathsf{K}$ be a quasivariety with a near unanimity term of arity $n$. Then $\mathsf{K}$ has the ES property iff every $\A \leq \A_1 \times \dots \times \A_{n-1}$ with $\A_1, \dots, \A_{n-1} \in \PPU(\mathsf{K}_{\textup{\textsc{rsi}}})$ lacks subalgebras that are proper and epic in $\mathsf{K}$.
\end{Theorem}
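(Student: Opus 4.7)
The plan is to handle the two directions of the biconditional separately, the forward direction being essentially immediate and the reverse direction requiring the infinitary Baker--Pixley Theorem.

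For the forward direction, I would note that since $\mathsf{K}$ is a quasivariety, it is closed under both $\SSS$ and $\PPU$, whence $\PPU(\mathsf{K}_{\textup{\textsc{rsi}}}) \subseteq \mathsf{K}$ and therefore every algebra $\A \leq \A_1 \times \dots \times \A_{n-1}$ with $\A_1, \dots, \A_{n-1} \in \PPU(\mathsf{K}_{\textup{\textsc{rsi}}})$ lies in $\mathsf{K}$. If such an $\A$ admitted a proper subalgebra epic in $\mathsf{K}$, the corresponding inclusion would be a nonsurjective $\mathsf{K}$-epimorphism, directly contradicting the ES property.

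For the reverse direction, I would argue by contraposition. Assume $\mathsf{K}$ lacks the ES property; then by Proposition \ref{Prop : epic} there exist $\B \in \mathsf{K}$ and a proper subalgebra $\A \leq \B$ epic in $\mathsf{K}$, and I would fix some $b \in B - A$. Using the Subdirect Decomposition Theorem \ref{Thm : Subdirect Decomposition}, embed $\B$ subdirectly into $\prod_{i \in I} \B_i$ with each $\B_i \in \mathsf{K}_{\textup{\textsc{rsi}}}$, and identify $\B$ with its image. The goal is then to collapse this (potentially infinite) representation down to a product of at most $n-1$ factors from $\PPU(\mathsf{K}_{\textup{\textsc{rsi}}})$ on which the failure $b \notin A$ remains visible.

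The key tool is the infinitary Baker--Pixley Theorem: in the presence of an $n$-ary near unanimity term, a subalgebra of an arbitrary product is determined by its projections to $(n-1)$-element subsets of the index set, after a suitable ultraproduct construction that bundles coordinates. Applied to $\A$ viewed inside $\prod_{i \in I} \pi_i[\A]$, the failure $b \notin A$ must be detected by some such projection; concretely, this yields factors $\A_1, \dots, \A_{n-1} \in \PPU(\mathsf{K}_{\textup{\textsc{rsi}}})$ and a surjective homomorphism $\pi \colon \B \to \B'$ with $\B' \leq \A_1 \times \dots \times \A_{n-1}$ such that $\pi(b) \notin \pi[A]$. Setting $\A' \coloneqq \pi[\A]$, the inclusion $\A' \leq \B'$ is proper since $\pi(b) \in B' - A'$. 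To verify that $\A' \leq \B'$ is epic in $\mathsf{K}$, I would take $g, h \colon \B' \to \C$ in $\mathsf{K}$ with $g{\upharpoonright}_{A'} = h{\upharpoonright}_{A'}$; then $g \circ \pi, h \circ \pi \colon \B \to \C$ agree on $A$, so they coincide by epicity of $\A \leq \B$, and surjectivity of $\pi$ transfers this equality back to $g = h$. This contradicts the assumed right-hand side of the biconditional.

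The main obstacle is the infinitary Baker--Pixley step. When $I$ is finite, the classical Baker--Pixley theorem pins the failure down to an $(n-1)$-subfactor without any recourse to ultraproducts; but in general $\B$ need not be finitely generated and $I$ can be infinite, in which case there is no a priori reason for $b \notin A$ to be witnessed on any finite projection, so one must genuinely pass to an ultraproduct of the $\B_i$'s to achieve the $(n-1)$-factor bound. This is precisely why the statement features $\PPU(\mathsf{K}_{\textup{\textsc{rsi}}})$ rather than $\mathsf{K}_{\textup{\textsc{rsi}}}$, and it is exactly the machinery the present paper's authors wish to circumvent when proving the stronger weak-ES variant in Theorem \ref{Thm : near unanimity}.
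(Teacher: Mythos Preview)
The paper does not prove this theorem at all: it is quoted from Campercholi \cite[Thm.\ 6.4]{Camper18jsl} (with the footnoted observation that the original proof actually yields $n-1$ factors rather than $n$). So there is no ``paper's own proof'' to compare against, and your sketch is in fact a faithful outline of Campercholi's original argument, which does rely on the infinitary Baker--Pixley machinery you invoke. Your closing paragraph is exactly right: the present paper's contribution is precisely to bypass that machinery for the \emph{weak} ES property (Theorem~\ref{Thm : near unanimity}), using the new notion of full subalgebras instead.

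One small correction: in the reverse direction you cite Proposition~\ref{Prop : epic} to obtain a proper epic $\A \leq \B$, but that proposition concerns the \emph{weak} ES property and finitely generated algebras, neither of which is relevant here. What you actually need is the trivial observation that a nonsurjective $\mathsf{K}$-epimorphism $f \colon \A \to \B$ makes $f[\A] \leq \B$ proper and epic in $\mathsf{K}$; no proposition from the paper is required.
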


The aim of this section is to show that this result can be improved significantly for the case of the weak ES property. More precisely, we will prove the following:

\begin{Theorem}\label{Thm : near unanimity}
Let $\mathsf{K}$ be a quasivariety with a near unanimity term of arity $n$. Then $\mathsf{K}$ has the weak ES property iff every finitely generated subdirect product $\A \leq \A_1 \times \dots \times \A_{n-1}$ with $\A_1, \dots, \A_{n-1} \in \mathsf{K}_{\textup{\textsc{rfsi}}}$ lacks subalgebras that are fully epic in $\mathsf{K}$.
\end{Theorem}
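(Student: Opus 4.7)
The forward direction is straightforward: if $\mathsf{K}$ has the weak ES property, then by Corollary~\ref{Cor : fully epic = weak ES} no finitely generated member of $\mathsf{K}$ admits a fully epic subalgebra, and the finitely generated subdirect products described in the statement lie in this class.

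For the backward direction, the plan is a contraposition. Assuming $\mathsf{K}$ fails the weak ES property, Corollary~\ref{Cor : fully epic = weak ES} combined with Lemma~\ref{Lem : from thm 5.4} furnishes finitely generated $\B \in \mathsf{K}$ and a finitely generated subalgebra $\A \leq \B$ that is fully epic in $\mathsf{K}$. Using the Subdirect Decomposition Theorem~\ref{Thm : Subdirect Decomposition}, I would fix a subdirect embedding $e \colon \B \to \prod_{i \in I} \B_i$ with each $\B_i \in \mathsf{K}_{\textsc{rsi}} \subseteq \mathsf{K}_{\textsc{rfsi}}$, and for every $(n-1)$-subset $J \subseteq I$ consider the induced surjection $\pi_J \colon \B \to \prod_{j \in J} \B_j$, writing $\B^J := \pi_J[\B]$ and $\A^J := \pi_J[\A]$. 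Each $\B^J$ is a finitely generated subdirect product of $n-1$ members of $\mathsf{K}_{\textsc{rfsi}}$, so it suffices to exhibit some $(n-1)$-subset $J^*$ with $\A^{J^*} \leq \B^{J^*}$ fully epic in $\mathsf{K}$.

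Three of the four conditions defining fully epic propagate to any such $J^*$ automatically. Almost totality passes from $B = \textup{Sg}^\B(A \cup \{b\})$ to $B^{J^*} = \textup{Sg}^{\B^{J^*}}(A^{J^*} \cup \{\pi_{J^*}(b)\})$. The congruence reach condition appearing in the definition of \emph{full} propagates because any nontrivial $\psi \in \mathsf{Con}_\mathsf{K}(\B^{J^*})$ lifts along the surjection $\pi_{J^*}$ to a nontrivial $\mathsf{K}$-congruence of $\B$, and the fullness of $\A \leq \B$ applied at any preimage of a given $\bar b \in B^{J^*}$ yields the corresponding witness in $A^{J^*}$. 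Epicness is preserved by composing any pair $g, h \colon \B^{J^*} \to \C$ that agrees on $A^{J^*}$ with $\pi_{J^*}$ and invoking the epicness of $\A \leq \B$ together with the surjectivity of $\pi_{J^*}$. The only remaining property, properness $\pi_{J^*}[A] \neq \pi_{J^*}[B]$, is where the real work lies.

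Producing such a $J^*$ is the main obstacle, and it is the point at which the near unanimity term enters decisively, through a Baker--Pixley style argument: over a finite product of members of $\mathsf{K}$, a subalgebra is determined by its $(n-1)$-projections. Concretely, fixing $b \in B \setminus A$, the plan is first to locate a finite $F \subseteq I$ with $\pi_F(b) \notin \pi_F[A]$, and then to apply the finite Baker--Pixley theorem to $\pi_F[\A] \leq \pi_F[\B]$ inside $\prod_{j \in F}\B_j$ in order to extract an $(n-1)$-subset $J^* \subseteq F$ with $\pi_{J^*}(b) \notin \pi_{J^*}[A]$. The delicate step is establishing the existence of such a finite $F$: the finite generation of both $\A$ and $\B$ is essential here (the statement fails without it, as witnessed by the subalgebra of eventually-zero sequences inside $\mathbf{2}^\omega$), and the argument should proceed by a compactness-style reduction exploiting that each element of $A$ and of $B$ is a term in finitely many generators. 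Once $F$ is in hand, the remaining steps go through as outlined, yielding $\A^{J^*} \leq \B^{J^*}$ as a fully epic subalgebra of a finitely generated subdirect product of $n-1$ members of $\mathsf{K}_{\textsc{rfsi}}$.
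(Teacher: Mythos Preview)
Your approach diverges from the paper's, and the step you flag as ``delicate'' is a genuine gap that I do not see how to close along the lines you suggest.

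The paper never starts from an arbitrary subdirect representation of $\B$ and then projects. Instead it proves directly (Proposition~\ref{Prop : near unanimity full}) that $\B$ \emph{itself} admits a subdirect embedding into a product of only $n-1$ RFSI algebras, so the original pair $\A \leq \B$ already witnesses the failure. The mechanism is this: fullness combined with the near unanimity term forces $\textup{id}_B$ to be \emph{$n$-irreducible} in $\mathsf{Con}_\mathsf{K}(\B)$. For if $\textup{id}_B = \theta_1 \cap \dots \cap \theta_n$ with every $\phi_i \coloneqq \bigcap_{j \neq i} \theta_j$ nontrivial, fullness supplies $a_i \in A$ with $\langle a_i, b \rangle \in \phi_i$, and the near unanimity identities give $\varphi^\B(a_1,\dots,a_n) = b$, contradicting $b \notin A$. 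A Zorn's-lemma argument (Proposition~\ref{Prop : optimal two cases}) then expresses any $n$-irreducible $\mathsf{K}$-congruence as an intersection of $n-1$ meet-irreducible ones, and Propositions~\ref{Prop : subdirect embedding} and~\ref{Prop : RFSI} yield the subdirect embedding of $\B$ into $n-1$ RFSI factors. No projection step, no Baker--Pixley, no passage to a finite index set is required.

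Your reduction to a finite $F \subseteq I$ with $\pi_F(b) \notin \pi_F[A]$ is not obviously available. The hypothesis ``$\pi_F(b) \in \pi_F[A]$ for every finite $F$'' says only that for each $F$ some term $t_F$ satisfies $\pi_F(b) = \pi_F\bigl(t_F^{\B}(a_1,\dots,a_k)\bigr)$; the term may vary with $F$, and finite generation of $\A$ does not bound the supply of terms unless $\mathsf{K}$ is locally finite, so a compactness argument of the kind you sketch does not go through in general. What your route would need is essentially an infinitary Baker--Pixley statement---precisely the machinery the paper sets out to avoid (see the discussion in the introduction). The paper's key move, using fullness to constrain $\mathsf{Con}_\mathsf{K}(\B)$ directly rather than to control projections out of a pre-chosen decomposition, is what makes the argument both short and self-contained.
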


To prove this theorem, it is convenient to introduce the following concept:

\begin{Definition}
Let $\mathsf{K}$ be a quasivariety, $\A \in \mathsf{K}$, and $\theta \in \mathsf{Con}_\mathsf{K}(\A)$.\ Given a positive integer $n$, we say that $\theta$ is \emph{$n$-irreducible} in $\mathsf{Con}_\mathsf{K}(\A)$ when $\theta= \theta_1 \cap \dots \cap \theta_n$ with $\theta_1, \dots, \theta_n \in \mathsf{Con}_\mathsf{K}(\A)$ implies $\theta=\theta_1 \cap \dots \cap \theta_{i-1} \cap \theta_{i+1} \cap \dots \cap \theta_n$ for some $i \leq n$. When $\mathsf{K}$ is clear from the context, we will simply say that $\theta$ is \emph{$n$-irreducible}.
\end{Definition}

Notice that the only $1$-irreducible $\mathsf{K}$-congruence of $\A$ is $A \times A$. Moreover, a $\mathsf{K}$-congruence $\theta$ of $\A$ is $2$-irreducible if and only if either $\theta \in \mathsf{Irr}_{\mathsf{K}}(\A)$ or $\theta = A \times A$.

\begin{Proposition}\label{Prop : optimal two cases}
Let $\mathsf{K}$ be a quasivariety, $\A \in \mathsf{K}$, and $\theta \in \mathsf{Con}_\mathsf{K}(\A)$ $n$-irreducible. Then there exist $\phi_1, \dots, \phi_{n-1} \in \mathsf{Irr}_{\mathsf{K}}(\A)$ such that $\theta = \phi_1 \cap \dots \cap \phi_{n-1}$.
\end{Proposition}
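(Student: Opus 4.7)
The plan is to argue by induction on $n$. The base case $n = 1$ is immediate: the only $1$-irreducible $\mathsf{K}$-congruence is $A \times A$, which is the empty intersection of meet-irreducibles. For the inductive step with $n \geq 2$, I distinguish two cases according to whether $\theta$ is also $(n-1)$-irreducible. If it is, the inductive hypothesis supplies $\phi_1, \dots, \phi_{n-2} \in \mathsf{Irr}_\mathsf{K}(\A)$ with $\theta = \phi_1 \cap \dots \cap \phi_{n-2}$, and padding by repetition yields a list of length $n - 1$ (the residual case $\theta = A \times A$ corresponds to the empty intersection). Otherwise, $\theta$ admits an irredundant representation $\theta = \alpha_1 \cap \dots \cap \alpha_{n-1}$, meaning that $\bigcap_{j \ne i} \alpha_j \supsetneq \theta$ for each $i$, and I proceed to replace each $\alpha_k$ by a meet-irreducible $\phi_k \supseteq \alpha_k$ while preserving both the equality and the irredundancy.

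At step $k \in \{1, \dots, n-1\}$, setting $\beta := \phi_1 \cap \dots \cap \phi_{k-1} \cap \alpha_{k+1} \cap \dots \cap \alpha_{n-1}$, I consider the set
\[
X_k := \{\phi \in \mathsf{Con}_\mathsf{K}(\A) : \alpha_k \subseteq \phi \text{ and } \phi \cap \beta = \theta\}.
\]
The set $X_k$ contains $\alpha_k$ and, thanks to Proposition \ref{Prop : chains of congruences} together with the observation that meets in $\mathsf{Con}_\mathsf{K}(\A)$ are set-theoretic intersections (so directed unions commute with $\cap\, \beta$), is closed under unions of nonempty chains. Zorn's lemma yields a maximal element $\phi_k \in X_k$. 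To show $\phi_k \in \mathsf{Irr}_\mathsf{K}(\A)$, suppose toward contradiction that $\phi_k = \mu \cap \nu$ with $\mu, \nu \supsetneq \phi_k$. Maximality forces $\mu \cap \beta, \nu \cap \beta \supsetneq \theta$, while $\mu \cap \nu \cap \beta = \theta$. This produces the $n$-factor representation $\theta = \mu \cap \nu \cap \phi_1 \cap \dots \cap \phi_{k-1} \cap \alpha_{k+1} \cap \dots \cap \alpha_{n-1}$, to which $n$-irreducibility of $\theta$ applies. Dropping $\mu$ or $\nu$ produces an element of $X_k$ strictly above $\phi_k$, contradicting maximality; dropping any $\phi_j$ ($j < k$) or $\alpha_j$ ($j > k$) witnesses redundancy in the intermediate representation $\theta = \phi_1 \cap \dots \cap \phi_k \cap \alpha_{k+1} \cap \dots \cap \alpha_{n-1}$, contradicting its irredundancy.

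The main subtlety I expect is the bookkeeping around persistence of irredundancy across iterations: one must verify that after replacing $\alpha_k$ by the larger $\phi_k$, no previously essential factor becomes superfluous in the intermediate representation. This follows by monotonicity from the irredundancy of the original $\alpha$-representation together with the containments $\phi_j \supseteq \alpha_j$, since any removable factor in the enlarged representation would already be removable in the original. After $n - 1$ iterations, $\theta = \phi_1 \cap \dots \cap \phi_{n-1}$ is the required representation with each $\phi_i \in \mathsf{Irr}_\mathsf{K}(\A)$.
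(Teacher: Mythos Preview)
Your proof is correct and close in spirit to the paper's, but the execution differs in a useful way. Both arguments start from an irredundant decomposition $\theta = \alpha_1 \cap \dots \cap \alpha_{m-1}$ (where $m$ is least with $\theta$ $m$-irreducible; your induction on $n$ with the case split effectively performs the same reduction). The paper then applies Zorn's lemma \emph{once} to the product poset of tuples $\langle \phi_1, \dots, \phi_{m-1}\rangle$ satisfying $\alpha_i \subseteq \phi_i$ and $\bigcap_i \phi_i = \theta$; the main technical step is verifying that a chain of such tuples has an upper bound, which requires an infinite distributive-law computation together with the chain structure. You instead iterate Zorn's lemma coordinate-by-coordinate, enlarging one $\alpha_k$ at a time inside the set $X_k$ and invoking the monotonicity argument to propagate irredundancy to each intermediate representation. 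Your route avoids the tuple-chain verification at the cost of the irredundancy bookkeeping; the paper's route is more compact because maximality in the product order delivers all coordinates simultaneously.

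One small omission worth patching: to conclude $\phi_k \in \mathsf{Irr}_\mathsf{K}(\A)$ you must also rule out $\phi_k = A \times A$, since the top element is never meet irreducible by the paper's convention. This is immediate from irredundancy: if $\phi_k = A \times A$ then $\phi_k \cap \beta = \beta = \theta$, contradicting that the $k$-th factor cannot be dropped from the current representation. With this noted, your argument goes through.
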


\begin{proof}
Let $m$ be the least positive integer such that $\theta$ is $m$-irreducible. Since we allow repetitions among the $\phi_1, \dots, \phi_{n-1}$ in the statement and $m \leq n$, it is sufficient to show that there exist $\phi_1, \dots, \phi_{m-1} \in \mathsf{Irr}_{\mathsf{K}}(\A)$ such that $\theta = \phi_1 \cap \dots \cap \phi_{m-1}$.
If $m=1$, then $\theta$ is $1$-irreducible. Thus, $\theta=A \times A$, and $\theta$ can be written as the intersection of an empty family of members of $\mathsf{Irr}_{\mathsf{K}}(\A)$. 
So, we may assume that $m \geq 2$. 

As $\theta$ is not $(m-1)$-irreducible, there exist $\theta_1, \dots, \theta_{m-1} \in \mathsf{Con}_\mathsf{K}(\A)$ such that $\theta = \theta_1 \cap \dots \cap \theta_{m-1}$ and $\theta \ne \theta_1 \cap \dots \cap \theta_{i-1} \cap \theta_{i+1} \cap \dots \cap \theta_{m-1}$ for every $i \leq m-1$. Consider the poset
\[
X \coloneqq \{ \langle \phi_1, \dots, \phi_{m-1} \rangle  : \theta_i \subseteq \phi_i \in \mathsf{Con}_\mathsf{K}(\A) \text{ for every $i \leq m-1$ and } \theta = \phi_1 \cap \dots \cap \phi_{m-1} \}
\]
ordered under the relation given by $\langle \phi_1, \dots, \phi_{m-1} \rangle \leq \langle \eta_1, \dots, \eta_{m-1} \rangle$ if and only if $\phi_i \subseteq \eta_i$ for every $i \leq m-1$. 
We will apply Zorn's Lemma to obtain a maximal element of $X$. Clearly, $X$ contains $\langle\theta_1, \dots, \theta_{m-1}\rangle$. 
Consider a nonempty chain $C$ in $X$. For each $i \leq m-1$ let $C_i$ be the projection of $C$ on the $i$-th coordinate. Observe that $C_i$ is a nonempty chain in $\mathsf{Con}_\mathsf{K}(\A)$ because $C$ is a nonempty chain in $X$. We will prove that $\langle \bigcup C_1, \dots, \bigcup C_{m-1}\rangle$ is an upper bound of $C$ in $X$. Proposition \ref{Prop : chains of congruences} implies that each $\bigcup C_i$ is a $\mathsf{K}$-congruence of $\A$. Furthermore, as $\theta_i$ is contained in every member of $C_i$ and $C_i$ is nonempty, we have $\theta_i \subseteq \bigcup C_i$ for every $i \leq m-1$.
Lastly, observe that
\begin{equation}\label{eq Ci}
\left(\bigcup C_1\right) \cap \dots \cap \left(\bigcup C_{m-1}\right) = \bigcup \{ \phi_1 \cap \dots \cap \phi_{m-1} : \phi_i \in C_i \text{ for } i \leq m-1\} = \theta_1 \cap \dots \cap \theta_{m-1},
\end{equation}
where the first equality holds by the infinite distributive law and the second can be established as follows. Let $\phi_1 \in C_1, \dots, \phi_{m-1} \in C_{m-1}$. Then there exists $\langle \phi_1', \dots, \phi_{m-1}' \rangle \in C$ such that $\phi_i \subseteq \phi_i'$ for every $i \leq m-1$ because $C$ is a chain. As a consequence,
\[
\theta_1 \cap \dots \cap \theta_{m-1} \subseteq \phi_1 \cap \dots \cap \phi_{m-1} \subseteq \phi_1' \cap \dots \cap \phi_{m-1}' 
=\theta,
\]
where the the first inclusion holds because $\phi_i \in C_i$ for every $i \leq m-1$, and the last equality holds because $\langle \phi_1', \dots, \phi_{m-1}' \rangle \in C$. Hence, $\phi_1 \cap \dots \cap \phi_{m-1} =  \theta$
 because $\theta = \theta_1 \cap \dots \cap \theta_{m-1}$. 
Therefore, \eqref{eq Ci} holds, and so $\langle \bigcup C_1, \dots, \bigcup C_{m-1}\rangle$ is an upper bound of $C$ in $X$. 
By Zorn's Lemma the poset $X$ has a maximal element $\langle \phi_1, \dots, \phi_{m-1} \rangle$. In particular, $\theta = \phi_1 \cap \dots \cap \phi_{m-1}$. 

It only remains to show that $\phi_1, \dots, \phi_{m-1} \in \mathsf{Irr}_{\mathsf{K}}(\A)$. First observe that for every $i \leq m-1$ we have
\[
\theta \ne  \phi_1 \cap \dots \cap \phi_{i-1} \cap \phi_{i+1} \cap \dots \cap \phi_{m-1},
\]
because otherwise $\theta =  \theta_1 \cap \dots \cap \theta_{i-1} \cap \theta_{i+1} \cap \dots \cap \theta_{m-1}$ as $\theta \subseteq \theta_j \subseteq \phi_j$ for every $j \leq m-1$.
It follows that $\phi_i \ne A \times A$ for every $i \leq m-1$ because $\theta =  \phi_1 \cap \dots \cap \phi_{m-1}$. 
Now, suppose that $\phi_i=\eta_1 \cap \eta_2$ for some $\eta_1,\eta_2 \in \mathsf{Con}_\mathsf{K}(\A)$. We have 
\[
\theta = \phi_1 \cap \dots \cap \phi_{m-1} = \phi_1 \cap \dots \cap \phi_{i-1} \cap \eta_1 \cap \eta_2 \cap \phi_{i+1} \cap \dots \cap \phi_{m-1}.
\]
As $\theta$ is $m$-irreducible and $\theta \ne \phi_1 \cap \dots \cap \phi_{k-1} \cap \phi_{k+1} \cap \dots \cap \phi_{m-1}$ for every $k \leq m-1$, we obtain 
\[
\theta = \phi_1 \cap \dots \cap \phi_{i-1} \cap \eta_j \cap \phi_{i+1} \cap \dots \cap \phi_{m-1}
\]
for $j=1$ or $j=2$.
Since $\theta_i \subseteq \phi_i \subseteq \eta_1, \eta_2$, the maximality of $\langle \phi_1, \dots, \phi_{m-1} \rangle$ in $X$ implies that $\phi_i=\eta_1$ or $\phi_i=\eta_2$. Thus, $\phi_i\in \mathsf{Irr}_{\mathsf{K}}(\A)$ as desired. 
\end{proof}

We will also make use of the following observation:

\begin{Proposition}\label{Prop : near unanimity full}
Let $\mathsf{K}$ be a quasivariety with a near unanimity term of arity $n$. Moreover, let $\B \in \mathsf{K}$ and $\A \leq \B$ full in $\mathsf{K}$. Then there exists a subdirect embedding of $\B$ into $\B_1 \times \dots \times \B_{n-1}$ for some $\B_1, \dots, \B_{n-1} \in \mathsf{K}_\textup{\textsc{rfsi}}$.
\end{Proposition}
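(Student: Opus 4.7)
The plan is to reduce the statement to an $n$-irreducibility property of $\textup{id}_B$ in $\mathsf{Con}_\mathsf{K}(\B)$. Indeed, if $\textup{id}_B$ is $n$-irreducible, then Proposition~\ref{Prop : optimal two cases} yields $\phi_1, \dots, \phi_{n-1} \in \mathsf{Irr}_\mathsf{K}(\B)$ with $\textup{id}_B = \phi_1 \cap \dots \cap \phi_{n-1}$. By Proposition~\ref{Prop : RFSI} each quotient $\B/\phi_i$ belongs to $\mathsf{K}_{\textup{\textsc{rfsi}}}$, and Proposition~\ref{Prop : subdirect embedding} then delivers the desired subdirect embedding $\B \to \prod_{i=1}^{n-1} \B/\phi_i$. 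Thus the whole task reduces to proving that $\textup{id}_B$ is $n$-irreducible, and this is exactly where the near unanimity term and the fullness of $\A$ must be combined.

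To verify $n$-irreducibility, I would assume $\textup{id}_B = \theta_1 \cap \dots \cap \theta_n$ with $\theta_i \in \mathsf{Con}_\mathsf{K}(\B)$ and, aiming at a contradiction, that $\psi_i \coloneqq \bigcap_{j \ne i} \theta_j \ne \textup{id}_B$ for every $i \le n$. Fix an arbitrary $b \in B$. Since each $\psi_i$ is a nonidentity $\mathsf{K}$-congruence of $\B$ and $\A \leq \B$ is full in $\mathsf{K}$, one can choose, for each $i \le n$, some $a_i \in A$ with $\langle a_i, b \rangle \in \psi_i$, i.e., $\langle a_i, b \rangle \in \theta_j$ whenever $j \ne i$. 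Letting $\varphi$ denote the near unanimity term of arity $n$, modulo any $\theta_j$ the tuple $(a_1, \dots, a_n)$ has every coordinate equal to $b$ except possibly the $j$-th, so the near unanimity identities force
\[
\varphi^\B(a_1, \dots, a_n) \equiv b \pmod{\theta_j}
\]
for every $j \le n$. Since $\theta_1 \cap \dots \cap \theta_n = \textup{id}_B$, this yields $\varphi^\B(a_1, \dots, a_n) = b$. But $\A$ is a subalgebra and $a_1, \dots, a_n \in A$, so $b = \varphi^\B(a_1, \dots, a_n) \in A$; as $b$ was arbitrary, we obtain $A = B$, contradicting the properness of $\A \leq \B$ that is built into the definition of fullness.

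The only step requiring any insight is the matching between fullness and the near unanimity term: the $n$ witnesses supplied by fullness occupy exactly the $n$ argument positions of $\varphi$, with the $i$-th witness allowed to deviate from $b$ only along $\theta_i$, precisely the coordinate where the near unanimity identities tolerate disagreement. Once this matchup is set up, $n$-irreducibility of $\textup{id}_B$ is immediate, and Propositions~\ref{Prop : optimal two cases}, \ref{Prop : RFSI}, and \ref{Prop : subdirect embedding} package the rest into the required subdirect embedding into $n-1$ RFSI factors.
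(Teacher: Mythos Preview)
Your proof is correct and follows essentially the same route as the paper: establish that $\textup{id}_B$ is $n$-irreducible via the near unanimity term combined with fullness, then invoke Propositions~\ref{Prop : optimal two cases}, \ref{Prop : RFSI}, and \ref{Prop : subdirect embedding}. The only cosmetic difference is that the paper fixes a single $b\in B\setminus A$ and derives $b\in A$ directly, whereas you let $b$ range over all of $B$ and conclude $A=B$; the underlying computation is identical.
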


\begin{proof}
We first show that $\text{id}_B$ is $n$-irreducible. Let $\theta_1, \dots, \theta_n \in \mathsf{Con}_{\mathsf{K}}(\B)$ be such that $\text{id}_B = \theta_1 \cap \dots \cap \theta_n$. Let also $\phi_i = \theta_1 \cap \dots \cap \theta_{i-1} \cap \theta_{i+1} \dots \cap \theta_n$ for each $i \leq n$. We will show that $\phi_i=\text{id}_B$ for some $i \leq n$. Suppose the contrary, with a view to contradiction. Now, recall that $\A\leq \B$ is proper and almost total. Therefore, there exists $b \in B$ such that $b \notin A$ and $B = \textup{Sg}^\B(A \cup \{ b \})$. Since $\A \leq \B$ is full in $\mathsf{K}$, there exist $a_1, \dots, a_n \in A$ such that $\langle a_i, b \rangle \in \phi_i$ for every $i \leq n$. By assumption $\mathsf{K}$ has a near unanimity term $\varphi(x_1, \dots, x_n)$. We will prove that
\[
\langle \varphi^\B(a_1, \dots, a_n), b \rangle \in  \theta_j \quad \text{for every } j \leq n.
\]
To this end, consider $j \leq n$. As $\langle a_i, b \rangle \in \phi_i \subseteq \theta_j$ for every $i \leq n$ such that $i \ne j$, we obtain $\langle \varphi^\B(a_1, \dots, a_n), \varphi^\B(b, \dots, b, a_j, b, \dots, b) \rangle \in  \theta_j$.  Furthermore, since $\varphi$ is a near unanimity term, we have  $\varphi^\B(b, \dots, b, a_j, b, \dots, b)=b$.  Hence, $\langle \varphi^\B(a_1, \dots, a_n), b \rangle \in  \theta_j$. This establishes the above display. 
Together with the assumption that $\text{id}_B = \theta_1 \cap \dots \cap \theta_n$, this implies $b = \varphi^\B(a_1, \dots, a_n)$. As $a_1, \dots, a_n \in A$ and $\A \leq \B$, we conclude that $b \in A$, which is false. Hence, $\text{id}_B$ is $n$-irreducible as desired.

By Proposition \ref{Prop : optimal two cases} there exist $\theta_1, \dots, \theta_{n-1} \in \mathsf{Irr}_{\mathsf{K}}(\B)$ such that $\text{id}_B =  \theta_1 \cap \dots \cap \theta_{n-1}$. Therefore, we can apply Proposition \ref{Prop : subdirect embedding} obtaining a subdirect embedding
\[
f \colon \B \to \B / \theta_1 \times \dots \times \B / \theta_{n-1}.
\]
Furthermore, from Proposition \ref{Prop : RFSI} and $\theta_1, \dots, \theta_{n-1} \in \mathsf{Irr}_{\mathsf{K}}(\B)$ it follows that each $\B / \theta_i$ belongs to $\mathsf{K}_{\textsc{rfsi}}$. 
\end{proof}

We are now ready to prove Theorem \ref{Thm : near unanimity}.

\begin{proof}[Proof of Theorem \ref{Thm : near unanimity}.]
The implication from left to right holds by Proposition \ref{Prop : epic}.  
We will prove the other implication by contraposition.
Suppose that $\mathsf{K}$ lacks the weak ES property. By Corollary \ref{Cor : fully epic = weak ES} there exist $\B \in \mathsf{K}$ finitely generated and $\A \leq \B$ fully epic in $\mathsf{K}$. In view of Proposition \ref{Prop : near unanimity full}, we may assume that $\B \leq \B_1 \times \dots \times \B_{n-1}$ is a subdirect product for some $\B_1, \dots, \B_{n-1} \in \mathsf{K}_{\textsc{rfsi}}$.
\end{proof}

As we mentioned, Theorem \ref{Thm : near unanimity} simplifies the task of determining whether a quasivariety has the weak ES property. The next example illustrates this in the setting of lattice theory.

\begin{exa}\label{Exa : distributive lattices}
Given $n \in \oper{Z}^+$, we say that a lattice $\A$ has \emph{length} $\leq n$ when its chains have cardinality $\leq n$. Similarly, we say that $\A$ has \emph{length} $n$ when it has length $\leq n$ and it contains an $n$-element chain. Lastly, a class $\mathsf{K}$ of lattices has \emph{bounded length} when there exists some $n \in \oper{Z}^+$ such that every member of $\mathsf{K}$ has length $\leq n$. We will prove that every nontrivial variety of lattices generated by a class of bounded length  lacks the weak ES property. As a consequence, every finitely generated nontrivial variety of lattices also lacks the weak ES property. Since varieties of lattices have a majority term, by Theorem \ref{Thm : near unanimity} we expect these failures of the weak ES property to be witnessed by fully epic situations $\A \leq \B$ where $\B$ can be viewed as a subdirect product $\B \leq \A_1 \times \A_2$ with $\A_1$ and $\A_2$ FSI.

Let $\mathsf{K}$ be a class of lattices of bounded length such that $\VVV(\mathsf{K})$ is nontrivial. By
Theorem \ref{Jonsson Lemma},
the FSI members of $\VVV(\mathsf{K})$ belong 
to $\HHH\SSS\PPU(\mathsf{K})$. Since 
the members of $\mathsf{K}$ have length $\leq n$ for 
some $n \in \oper{Z}^+$, so do those of $\PPU(\mathsf{K})$ by 	\L o\'s' Theorem \cite[Thm.\ V.2.9]{BuSa00}. As the class operators $\HHH$ and $\SSS$ do not increase the cardinality of chains, we conclude that the FSI members of $\VVV(\mathsf{K})$ have length $\leq n$. Now, as $\VVV(\mathsf{K})$ is nontrivial, it has at least one SI member. Therefore, there exists  $\A_1 \in \VVV(\mathsf{K})_{\textsc{si}}$ of maximal length. Let $\textup{Cg}^{\A_1}(a, b)$
be the monolith of $\A_1$.  As $a \ne b$, we may assume that $a < b$. Moreover, let $\A_2$ be the two-element chain $0 < 1$ viewed as a lattice,
which is also SI. Consider the subalgebra $\A$ of $\A_1 \times \A_2$ with universe $(A_1 \times \{ 0 \}) \cup ({\uparrow}b \times \{ 1 \})$, where ${\uparrow}b \coloneqq \{ c \in A_1 : b \leq c \}$, and let $\B \coloneqq \textup{Sg}^{\A_1 \times \A_2}(A \cup \{ \langle a, 1 \rangle\})$.\ Clearly, $\B \leq \A_1 \times \A_2$ is a subdirect product.\ We will prove that $\A \leq \B$ is fully epic in $\VVV(\mathsf{K})$, thus showing that $\VVV(\mathsf{K})$ lacks the weak ES property.

First observe that for every nonidentity $\theta \in \mathsf{Con}(\B)$ we have
\[
(\langle a, 1 \rangle/ \theta = \langle b, 1 \rangle / \theta \text{ and } \langle a, 0 \rangle / \theta = \langle b, 0 \rangle / \theta)\text{ or }(\langle a, 1 \rangle / \theta = \langle a, 0 \rangle / \theta \text{ and } \langle b, 1 \rangle / \theta = \langle b, 0 \rangle / \theta).
\]
To prove this, consider a nonidentity $\theta \in \mathsf{Con}(\B)$. As $\VVV(\mathsf{K})$ is congruence distributive by Theorem \ref{Thm : NUT implies CD} and $\B \leq \A_1 \times \A_2$ is a subdirect product, we can apply Theorem \ref{Thm : CD varieties} obtaining some $\theta_1 \in \mathsf{Con}(\A_1)$ and $\theta_2 \in \mathsf{Con}(\A_2)$ such that $\theta = (\theta_1 \times \theta_2){\upharpoonright}_B$. As $\theta \ne \textup{id}_B$, some $\theta_i$ contains the monolith of $\A_i$. Thus, $\langle a, b \rangle \in \theta_1$ or $\langle 0, 1 \rangle \in \theta_2$. Together with $\theta = (\theta_1 \times \theta_2){\upharpoonright}_B$, this establishes the above display.

Now, as $\langle a, 1 \rangle \notin A$ and $\B$ is generated by $A \cup \{ \langle a, 1 \rangle\}$, in order to prove that $\A \leq \B$ is full in $\VVV(\mathsf{K})$, it suffices to show that every nonidentity congruence of $\B$ glues $\langle a, 1 \rangle$ with some element of $\A$. But this holds by the above display. As $\A \leq \B$ is full in $\VVV(\mathsf{K})$, we can apply Proposition \ref{Prop : weak ES vs fully epic} obtaining that $\A \leq \B$ is  epic in $\VVV(\mathsf{K})$ provided that the two conditions in Theorem \ref{Thm : weak ES vs fully epic} fail.\ If condition (\ref{weak ES vs fully epic : item : 1}) holds, the last part of Proposition \ref{Prop : weak ES vs fully epic} implies that there exists a nonidentity congruence $\theta$ of $\B$ with $\theta{\upharpoonright}_A = \textup{id}_A$, a contradiction with the above display. On the other hand, if condition (\ref{weak ES vs fully epic : item : 2}) holds, $\B$ embeds into an SI member of $\VVV(\mathsf{K})$. But this is impossible because the length of $\B$ is strictly larger than the length of $\A_1$, and the latter is an upper bound for the length of the SI members of $\VVV(\mathsf{K})$.
\qed
\end{exa}

We close this section by providing evidence suggesting that Theorem \ref{Thm : near unanimity} cannot be extended beyond the setting of quasivarieties with a near unanimity term. More precisely, we will show that Proposition \ref{Prop : near unanimity full} fails for arbitrary congruence distributive varieties.

\begin{exa}\label{Exa : implication algebras}
The implicative subreducts
of Boolean algebras are called \emph{implication algebras} \cite{Ab67}. These form a variety $\mathsf{IA}$ that is congruence distributive, but lacks a near unanimity term \cite[Lem.\ 3]{Mitschke78}. We will prove that Proposition \ref{Prop : near unanimity full} fails for $\mathsf{IA}$.\ Let $\B$ be the implicative reduct 
of the powerset Boolean algebra $\mathcal{P}(\oper{N})$ and $\A$ the subalgebra of $\B$ with universe $\mathcal{P}(\oper{N}) - \{ \emptyset \}$. We will show that $\A \leq \B$ is full in $\mathsf{IA}$, but that $\B$ cannot be obtained as a subdirect product $\B \leq \A_1 \times \dots \times \A_n$ in which each $\A_i$ is FSI. The latter holds because the only FSI implication algebra is the implicative reduct 
of the two-element Boolean algebra, while $\B$ is infinite. Then we prove that $\A \leq \B$ is full in $\mathsf{IA}$. Clearly, $\A \leq \B$ is proper and almost total. Moreover, as the congruences of $\B$ coincide with those of the powerset Boolean algebra $\mathcal{P}(\oper{N})$, every nonidentity congruence of $\B$ glues $\emptyset$ with some element of $\A$.\footnote{Although we do not need it here, we remark that $\A \leq \B$ is indeed fully epic in $\mathsf{IA}$ (cf.\ \cite[Prop.\ 4.5]{BlHoo06}).}
\qed
\end{exa}

\section{Congruence permutable varieties}

Given two binary relations $R_1$ and $R_2$ on a set $A$, we let
\[
R_1 \circ R_2 \coloneqq \{ \langle a, b \rangle \in A \times A : \text{there exists }c\in A \text{ s.t. }\langle a, c \rangle \in R_1 \text{ and }\langle c, b \rangle \in R_2 \}.
\]
A variety $\mathsf{K}$ is said to be \emph{congruence permutable} when for every $\A \in \mathsf{K}$ and $\theta_1, \theta_2 \in \mathsf{Con}(\A)$ we have $\theta_1 \circ \theta_2 = \theta_2 \circ \theta_1$. As the notion of congruence permutability does not generalize smoothly to quasivarieties, in this section we will focus on varieties only. Given an algebra $\A$, we denote the join operation of the lattice $\mathsf{Con}(\A)$  by $+^\A$. We will make use of the following observation (see, e.g., \cite[Thm.\ II.5.9]{BuSa00}):

\begin{Proposition}\label{Prop : permutable joins relational}
A variety $\mathsf{K}$ is congruence permutable iff $\theta_1 +^\A \theta_2 = \theta_1 \circ \theta_2$ for every $\A \in \mathsf{K}$ and $\theta_1, \theta_2 \in \mathsf{Con}(\A)$.
\end{Proposition}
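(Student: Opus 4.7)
The plan is to prove both directions by exploiting the standard fact that for any congruences $\theta_1,\theta_2$ on an algebra $\A$, the join $\theta_1 +^\A \theta_2$ is the smallest congruence containing $\theta_1 \cup \theta_2$, and that $\theta_1 \circ \theta_2$ is always a reflexive, compatible relation containing both $\theta_1$ and $\theta_2$ (reflexivity of $\theta_2$ gives $(a,b) \in \theta_1$ implies $(a,b) \in \theta_1 \circ \theta_2$, and symmetrically for $\theta_2$; compatibility follows componentwise from the compatibility of $\theta_1$ and $\theta_2$). Consequently $\theta_1 \circ \theta_2 \subseteq \theta_1 +^\A \theta_2$ always holds, and the reverse inclusion is equivalent to $\theta_1 \circ \theta_2$ being a congruence, i.e., being symmetric and transitive.

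For the implication from left to right, assume $\mathsf{K}$ is congruence permutable and fix $\A \in \mathsf{K}$ and $\theta_1,\theta_2 \in \mathsf{Con}(\A)$. First I would verify that $\theta_1 \circ \theta_2$ is symmetric: using $(\theta_1 \circ \theta_2)^{-1} = \theta_2^{-1} \circ \theta_1^{-1} = \theta_2 \circ \theta_1$ and the permutability hypothesis $\theta_2 \circ \theta_1 = \theta_1 \circ \theta_2$. Then I would verify transitivity by computing
\[
(\theta_1 \circ \theta_2) \circ (\theta_1 \circ \theta_2) = \theta_1 \circ (\theta_2 \circ \theta_1) \circ \theta_2 = \theta_1 \circ (\theta_1 \circ \theta_2) \circ \theta_2 = (\theta_1 \circ \theta_1) \circ (\theta_2 \circ \theta_2) = \theta_1 \circ \theta_2,
\]
using permutability and the fact that $\theta_i \circ \theta_i = \theta_i$ for any equivalence relation. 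Combined with compatibility, this makes $\theta_1 \circ \theta_2$ a congruence; since it contains $\theta_1$ and $\theta_2$ it must equal $\theta_1 +^\A \theta_2$.

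For the converse, I would use the symmetry of the join. Assume $\theta_1 +^\A \theta_2 = \theta_1 \circ \theta_2$ for every $\A \in \mathsf{K}$ and all $\theta_1,\theta_2 \in \mathsf{Con}(\A)$. Interchanging the roles of $\theta_1$ and $\theta_2$ gives $\theta_2 +^\A \theta_1 = \theta_2 \circ \theta_1$. Since the join operation in $\mathsf{Con}(\A)$ is commutative, $\theta_1 +^\A \theta_2 = \theta_2 +^\A \theta_1$, whence $\theta_1 \circ \theta_2 = \theta_2 \circ \theta_1$, establishing congruence permutability.

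There is no real obstacle here: the argument is routine once one recognizes that $\theta_1 \circ \theta_2 \subseteq \theta_1 +^\A \theta_2$ always holds and that the reverse inclusion is purely a matter of $\theta_1 \circ \theta_2$ already being an equivalence relation. The only care needed is the algebraic manipulation showing that permutability forces $\theta_1 \circ \theta_2$ to be symmetric and transitive, which reduces to one line each.
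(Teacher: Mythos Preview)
Your argument is correct and is exactly the standard proof of this well-known fact. The paper does not supply its own proof of this proposition; it merely records the statement and cites \cite[Thm.\ II.5.9]{BuSa00}, so there is nothing further to compare.
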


Varieties that are both congruence distributive and congruence permutable are called \emph{arithmetical}. Moreover, a class of algebras is said to be \emph{universal} when it is closed under $\III, \SSS$, and $\PPU$ or, equivalently, when it can be axiomatized by universal sentences (see, e.g., \cite[Thm.\ V.2.20]{BuSa00}). The task of determining whether an arithmetical variety, whose class of FSI members is universal, has the ES property can be simplified as follows \cite[Thm.\ 6.8]{Camper18jsl}:

\begin{Theorem}\label{Thm : Campercholi CP}
Let $\mathsf{K}$ be an arithmetical variety such that $\mathsf{K}_{\textup{\textsc{fsi}}}$ is a universal class. Then $\mathsf{K}$ has the ES property iff the members of $\mathsf{K}_{\textup{\textsc{fsi}}}$ lack subalgebras that are proper and epic in $\mathsf{K}$.
\end{Theorem}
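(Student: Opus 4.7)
The forward direction is immediate: since $\mathsf{K}_{\textsc{fsi}} \subseteq \mathsf{K}$, a proper epic subalgebra of a member of $\mathsf{K}_{\textsc{fsi}}$ would be a nonsurjective $\mathsf{K}$-epimorphism. I will prove the backward direction by contraposition.

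Suppose $\mathsf{K}$ lacks the ES property, so that there exists a proper epic inclusion $\A_0 \leq \B_0$ in $\mathsf{K}$. Fix $b \in B_0 - A_0$ and consider the family of congruences on $\B_0$ that do not relate $b$ to any element of $A_0$. By Proposition \ref{Prop : chains of congruences} this family is closed under nonempty chains, and it contains $\textup{id}_{B_0}$, so Zorn's Lemma yields a maximal member $\theta$. Writing $\overline{\A} := \A_0/\theta$ and $\overline{\B} := \B_0/\theta$, the inclusion $\overline{\A} \leq \overline{\B}$ is proper (witnessed by $b/\theta$) and epic (any two homomorphisms on $\overline{\B}$ agreeing on $\overline{\A}$ lift through the quotient to maps on $\B_0$ agreeing on $\A_0$, hence equal); moreover, by an argument parallel to Proposition \ref{Prop : full quotients}, every nonidentity congruence of $\overline{\B}$ relates $b/\theta$ to some element of $\overline{A}$. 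Restricting to $\textup{Sg}^{\overline{\B}}(\overline{A} \cup \{b/\theta\})$ and re-invoking the same maximality argument (to recover epicness together with almost totalness and fullness in the restricted setup) yields a proper, almost total, full, and epic inclusion $\A \leq \B$ in $\mathsf{K}$.

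The crux is to argue that such a $\B$ must belong to $\mathsf{K}_{\textsc{fsi}}$. Assume, aiming for a contradiction, that $\textup{id}_B = \phi_1 \cap \phi_2$ with $\phi_1, \phi_2 \in \mathsf{Con}(\B) - \{\textup{id}_B\}$. By fullness, there exist $a_1, a_2 \in A$ with $\langle a_i, b\rangle \in \phi_i$. Arithmeticity supplies a Pixley term $p$ satisfying $p(x,y,y) = p(x,y,x) = p(y,y,x) = x$. Computing modulo $\phi_1$ (using $b \equiv a_1$) gives $p(a_1, b, a_2) \equiv a_2$, and modulo $\phi_2$ (using $b \equiv a_2$) gives $p(a_1, b, a_2) \equiv a_1$. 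One then seeks to combine these with congruence permutability (Proposition \ref{Prop : permutable joins relational}), which gives $\phi_1 + \phi_2 = \phi_1 \circ \phi_2$, and with the third Pixley identity to force $p(a_1, b, a_2) = b$; this would place $b$ in $A$, contradicting $b \in B - A$ and forcing $\B \in \mathsf{K}_{\textsc{fsi}}$, as desired.

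The main obstacle is exactly closing this Pixley loop: the two congruence equivalences derived above do not, by themselves, pin down $p(a_1, b, a_2) = b$, and some further structural input is needed. Campercholi's original proof bridges the gap via the sheaf representation theory of arithmetical varieties together with the infinitary Baker-Pixley Theorem, reducing epicness in $\B$ to epicness in each stalk (which, being FSI, is ruled out by hypothesis). A proof avoiding that heavy machinery would more likely proceed via Fleischer's Lemma in congruence permutable varieties, rewriting $\B$ and $\A$ as fiber products of $\B/\phi_1$ and $\B/\phi_2$ over common quotients $\D$ and $\D'$ respectively, with $\A \leq \B$ corresponding to a proper surjection $\D' \twoheadrightarrow \D$, and then exploiting that proper surjection to construct two distinct homomorphisms on $\B$ that agree on $\A$; the universality of $\mathsf{K}_{\textsc{fsi}}$ would finally ensure that the relevant quotients, obtained as ultraproducts, themselves lie in $\mathsf{K}_{\textsc{fsi}}$. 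This is the kind of argument the authors' new framework streamlines in the weak ES setting of Theorem \ref{Thm : CP locally finite}.
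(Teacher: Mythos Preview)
The paper does not prove this theorem: it is quoted from \cite[Thm.~6.8]{Camper18jsl}, and the introduction explicitly records that Campercholi's argument relies on definability theory, sheaf representations, and the infinitary Baker--Pixley theorem. There is thus no in-paper proof to compare against, and your closing remarks correctly identify that the authors' new framework is aimed at the \emph{weak} ES analogue (Theorem~\ref{Thm : CP locally finite}), not at this statement.

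Your proposal is a sketch with an openly acknowledged gap, and the gap is genuine on two fronts. First, the reduction to an \emph{almost total} epic inclusion is not justified: epicness of $\A_0 \leq \B_0$ survives quotients, but your restriction to $\textup{Sg}^{\overline{\B}}(\overline{A}\cup\{b/\theta\})$ need not preserve it, since in general $\A \leq \B$ epic and $\A \leq \C \leq \B$ do not force $\A \leq \C$ epic (an Isbell zigzag witnessing membership in the dominion of $\A$ in $\B$ may pass through elements outside $\C$). The alternative of enlarging $\A_0$ to a maximal proper subalgebra of $\B_0$ requires $\B_0$ to be finitely generated, which is exactly what separates the ES property from its weak form; this is why the paper's machinery yields only Theorem~\ref{Thm : CP locally finite}. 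Second, even granting a fully epic $\A \leq \B$, your Pixley computation does not close: from $p(a_1,b,a_2)\equiv a_2 \pmod{\phi_1}$ and $p(a_1,b,a_2)\equiv a_1 \pmod{\phi_2}$ one cannot conclude $p(a_1,b,a_2)=b$, and the identity $p(x,y,x)=x$ does not help. The paper's Proposition~\ref{Prop : cong permutable x} avoids Pixley terms altogether: it uses epicness (via Lemma~\ref{Lem : congr are extensions if fully epic} and Proposition~\ref{Prop : Cg commutes with restrictions}) to obtain $(\phi_1 +^\B \phi_2){\upharpoonright}_A = \phi_1{\upharpoonright}_A +^\A \phi_2{\upharpoonright}_A$, and only then applies permutability inside $\A$ to produce $a\in A$ with $\langle a,b\rangle \in \phi_1\cap\phi_2=\textup{id}_B$. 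That route works, but it presupposes a fully epic inclusion, which you have not secured in the non-finitely-generated setting.
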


For the case of the weak ES property, this result can be improved as follows:

\begin{Theorem}\label{Thm : CP locally finite}
Let $\mathsf{K}$ be a congruence permutable variety. Then $\mathsf{K}$ has the weak ES property iff the finitely generated members of $\mathsf{K}_{\textup{\textsc{fsi}}}$ lack subalgebras that are fully epic in $\mathsf{K}$.
\end{Theorem}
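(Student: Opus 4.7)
The forward implication is immediate from Corollary \ref{Cor : fully epic = weak ES}: if $\mathsf{K}$ has the weak ES property, then no finitely generated member of $\mathsf{K}$ admits a fully epic subalgebra, and in particular no finitely generated member of $\mathsf{K}_{\textsc{fsi}}$ does.

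For the converse I argue contrapositively. Suppose $\mathsf{K}$ lacks the weak ES property. By Corollary \ref{Cor : fully epic = weak ES} there exist a finitely generated $\B \in \mathsf{K}$ and a subalgebra $\A \leq \B$ that is fully epic in $\mathsf{K}$, and it suffices to show that $\B$ must belong to $\mathsf{K}_{\textsc{fsi}}$. Pick $b \in B \setminus A$ witnessing almost-totality, so $B = \textup{Sg}^\B(A \cup \{b\})$. Assume, toward a contradiction, that $\B \notin \mathsf{K}_{\textsc{fsi}}$; by Proposition \ref{Prop : RFSI} this means $\textup{id}_B$ is not meet irreducible in $\mathsf{Con}_\mathsf{K}(\B)$, so there exist $\theta_1, \theta_2 \in \mathsf{Con}_\mathsf{K}(\B) \setminus \{\textup{id}_B\}$ with $\theta_1 \cap \theta_2 = \textup{id}_B$. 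By fullness of $\A \leq \B$, choose $a_i \in A$ with $\langle a_i, b \rangle \in \theta_i$ for $i = 1, 2$.

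The heart of the argument is the claim that $\langle a_1, a_2 \rangle \notin \theta_1{\upharpoonright}_A + \theta_2{\upharpoonright}_A$, and this is the only step where congruence permutability is used. By Proposition \ref{Prop : permutable joins relational} applied inside $\A \in \mathsf{K}$, this join equals the composition $\theta_1{\upharpoonright}_A \circ \theta_2{\upharpoonright}_A$. If the claim failed, some $a \in A$ would satisfy $\langle a_1, a \rangle \in \theta_1$ and $\langle a, a_2 \rangle \in \theta_2$; chaining these with $\langle b, a_1 \rangle \in \theta_1$ and $\langle a_2, b \rangle \in \theta_2$ yields $\langle b, a \rangle \in \theta_1$ and $\langle a, b \rangle \in \theta_2$, hence $\langle a, b \rangle \in \theta_1 \cap \theta_2 = \textup{id}_B$, forcing $a = b$ and contradicting $a \in A$, $b \notin A$.

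Granted the claim, the desired contradiction with epicness comes from two distinct homomorphisms $f_1, f_2 \colon \B \to \D$ in $\mathsf{K}$ that coincide on $A$. Let $\D \coloneqq \A/(\theta_1{\upharpoonright}_A + \theta_2{\upharpoonright}_A)$; since $\A \in \mathsf{K}$ and $\mathsf{K}$ is a variety, $\D \in \mathsf{K}$. For $i \in \{1,2\}$, define $f_i$ as the composition
\[
\B \twoheadrightarrow \B/\theta_i \xrightarrow{\;\cong\;} \A/\theta_i{\upharpoonright}_A \twoheadrightarrow \A/(\theta_1{\upharpoonright}_A + \theta_2{\upharpoonright}_A) = \D,
\]
where the middle isomorphism uses that $\A \hookrightarrow \B \twoheadrightarrow \B/\theta_i$ is surjective by fullness with kernel $\theta_i{\upharpoonright}_A$, and the last map is canonical since $\theta_i{\upharpoonright}_A \subseteq \theta_1{\upharpoonright}_A + \theta_2{\upharpoonright}_A$. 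Both maps send $a \in A$ to $a/(\theta_1{\upharpoonright}_A + \theta_2{\upharpoonright}_A)$, so $f_1{\upharpoonright}_A = f_2{\upharpoonright}_A$; however $f_i(b) = a_i/(\theta_1{\upharpoonright}_A + \theta_2{\upharpoonright}_A)$, and these values are distinct by the claim. The main technical obstacle is identifying the correct target $\D$; the conceptual point is that congruence permutability renders the join $\theta_1{\upharpoonright}_A + \theta_2{\upharpoonright}_A$ as a single composition, so the discrepancy between this join computed inside $\A$ and the restriction to $A$ of the corresponding join in $\B$ (where the short chain $a_1 \mathrel{\theta_1} b \mathrel{\theta_2} a_2$ through the new element $b$ becomes available) is exactly what separates $f_1$ from $f_2$.
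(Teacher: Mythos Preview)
Your proof is correct, and the approach differs from the paper's in an interesting way. Both arguments share the same combinatorial core: from $a_1 \mathrel{\theta_1} b \mathrel{\theta_2} a_2$ and congruence permutability in $\A$, any element $a \in A$ witnessing $\langle a_1, a_2 \rangle \in \theta_1{\upharpoonright}_A \circ \theta_2{\upharpoonright}_A$ would be forced to equal $b$. The difference lies in how epicness is brought to bear.

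The paper develops auxiliary machinery (Lemmas \ref{Lem : congr are extensions if fully epic} and \ref{Lem : congr can be extended if full}, and Proposition \ref{Prop : Cg commutes with restrictions}) showing that for a fully epic $\A \leq \B$ one has $(\theta_1 +^\B \theta_2){\upharpoonright}_A = \theta_1{\upharpoonright}_A +^\A \theta_2{\upharpoonright}_A$; epicness enters through Lemma \ref{Lem : congr are extensions if fully epic}, which invokes Proposition \ref{Prop : weak ES vs fully epic}. Since $\langle a_1, a_2 \rangle$ clearly lies in the left-hand side, it lies in the right, and the chain argument then yields $b \in A$ directly. You instead read the chain argument contrapositively to obtain $\langle a_1, a_2 \rangle \notin \theta_1{\upharpoonright}_A +^\A \theta_2{\upharpoonright}_A$, and then exhibit explicit homomorphisms $f_1, f_2 \colon \B \to \A/(\theta_1{\upharpoonright}_A +^\A \theta_2{\upharpoonright}_A)$ that agree on $\A$ but separate $b$. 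The surjectivity of $\A \to \B/\theta_i$ that you use is exactly fullness, and epicness is contradicted at the very end rather than consumed through an earlier lemma.

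Your route is more self-contained: it bypasses Lemmas \ref{Lem : congr are extensions if fully epic}, \ref{Lem : congr can be extended if full}, and Proposition \ref{Prop : Cg commutes with restrictions} entirely. The paper's route, on the other hand, isolates Proposition \ref{Prop : Cg commutes with restrictions} as a reusable fact about fully epic inclusions in any variety, independent of congruence permutability.
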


\begin{proof}
The implication from left to right follows from Proposition \ref{Prop : epic}. To prove the other implication, we reason by contraposition. Suppose that $\mathsf{K}$ lacks the weak ES property. By Corollary \ref{Cor : fully epic = weak ES} there exist a finitely generated $\B \in \mathsf{K}$ and $\A \leq \B$ fully epic in $\mathsf{K}$. Using the next proposition (which will be established in the rest of the section) we conclude that $\B \in \mathsf{K}_{\textsc{fsi}}$. 
\end{proof}

\begin{Proposition} \label{Prop : cong permutable x}
    Let $\mathsf{K}$ be a congruence permutable variety and $\B \in \mathsf{K}$. If  $\A \leq \B$ is fully epic in $\mathsf{K}$, then $\B$ is FSI.
\end{Proposition}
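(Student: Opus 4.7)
The plan is to prove the contrapositive: assume $\B$ is not FSI and construct two distinct homomorphisms from $\B$ to a member of $\mathsf{K}$ that agree on $A$, contradicting the epicness of $\A \leq \B$. Since $\A$ is proper in $\B$, the algebra $\B$ is nontrivial, and failure of FSI yields $\theta_1, \theta_2 \in \mathsf{Con}(\B) \setminus \{\textup{id}_B\}$ with $\theta_1 \cap \theta_2 = \textup{id}_B$. I would set $\alpha \coloneqq \theta_1{\upharpoonright}_A$ and $\beta \coloneqq \theta_2{\upharpoonright}_A$. Fullness of $\A$ together with $\theta_i \ne \textup{id}_B$ guarantees that for every $b \in B$ and each $i \in \{1,2\}$ there exists some $a \in A$ with $\langle a, b \rangle \in \theta_i$.

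My candidate target is $\C \coloneqq \A/(\alpha+\beta)$, which lies in $\mathsf{K}$ since $\mathsf{K}$ is a variety. I would define $g, g' \colon \B \to \C$ by
\[
g(b) \coloneqq [a_1]_{\alpha+\beta} \qquad \text{and} \qquad g'(b) \coloneqq [a_2]_{\alpha+\beta},
\]
for any witnesses $a_1, a_2 \in A$ with $\langle a_1, b \rangle \in \theta_1$ and $\langle a_2, b \rangle \in \theta_2$, respectively. Well-definedness is immediate, since two witnesses for $b$ via $\theta_1$ differ by an element of $\alpha \subseteq \alpha+\beta$, and symmetrically for $\theta_2$. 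A routine verification using that $\theta_1, \theta_2$ are congruences shows $g, g'$ are homomorphisms; and since for $b = a \in A$ one may pick $a_1 = a_2 = a$, both restrict on $A$ to the canonical quotient map $\A \to \A/(\alpha+\beta)$. In particular, $g{\upharpoonright}_A = g'{\upharpoonright}_A$.

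The heart of the argument is to show $g \ne g'$. Suppose otherwise, so that for every $b \in B$ and every pair of witnesses $a_1, a_2$ one has $\langle a_1, a_2 \rangle \in \alpha + \beta$. Here I would invoke congruence permutability of $\mathsf{K}$ applied inside $\A$: Proposition \ref{Prop : permutable joins relational} gives $\alpha + \beta = \alpha \circ \beta$, so there exists $c \in A$ with $\langle a_1, c \rangle \in \alpha \subseteq \theta_1$ and $\langle c, a_2 \rangle \in \beta \subseteq \theta_2$. Combining with $\langle a_1, b \rangle \in \theta_1$ and $\langle a_2, b \rangle \in \theta_2$ via symmetry and transitivity yields $\langle c, b \rangle \in \theta_1 \cap \theta_2 = \textup{id}_B$, whence $b = c \in A$. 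As $b \in B$ was arbitrary, $A = B$, contradicting the properness of $\A$.

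I expect the main obstacle to be isolating the right target algebra $\A/(\alpha+\beta)$ together with the symmetric pair of ``ambiguous'' maps $g, g'$: the argument rests on the fact that the two natural witness choices, via $\theta_1$ versus $\theta_2$, produce a priori distinct cosets modulo $\alpha+\beta$, and that requiring these cosets to coincide collides with the Mal'cev reconstruction of $b$ from elements of $A$. Degenerate situations where, say, $\alpha = \textup{id}_A$ require no separate treatment, as fullness then yields unique witnesses and $g$ simply becomes the canonical quotient.
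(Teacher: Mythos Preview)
Your proof is correct. The endgame is identical to the paper's: once you have $\langle a_1, a_2 \rangle \in \alpha +^{\A} \beta$ for witnesses $a_1, a_2$ of some $b \notin A$, congruence permutability in $\A$ produces $c \in A$ with $\langle c, b \rangle \in \theta_1 \cap \theta_2 = \textup{id}_B$, forcing $b \in A$.

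Where you differ is in how you reach $\langle a_1, a_2 \rangle \in \alpha +^{\A} \beta$. The paper first proves three auxiliary results (Lemmas~\ref{Lem : congr are extensions if fully epic} and~\ref{Lem : congr can be extended if full}, then Proposition~\ref{Prop : Cg commutes with restrictions}) culminating in the identity $(\theta_1 +^{\B} \theta_2){\upharpoonright}_A = \theta_1{\upharpoonright}_A +^{\A} \theta_2{\upharpoonright}_A$, and then observes that $\langle a_1, a_2 \rangle$ lies in the left-hand side trivially. Epicness enters indirectly, through Lemma~\ref{Lem : congr are extensions if fully epic} (distinct congruences of $\B$ restrict to distinct congruences of $\A$). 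You instead exhibit the two homomorphisms $g, g' \colon \B \to \A/(\alpha +^{\A} \beta)$ explicitly---in effect the composites $\B \twoheadrightarrow \B/\theta_i \cong \A/\theta_i{\upharpoonright}_A \twoheadrightarrow \A/(\alpha +^{\A} \beta)$, where the middle isomorphism comes from fullness---and let epicness force $g = g'$ directly. Your route is more self-contained and avoids the intermediate congruence-lattice machinery; the paper's route isolates the structural fact that joins commute with restriction, which is of some independent interest even though it is only used here.
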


The rest of the section is devoted to the proof of Proposition \ref{Prop : cong permutable x}. We begin with the next observations:

\begin{Lemma}\label{Lem : congr are extensions if fully epic}
Let $\mathsf{K}$ be a variety, $\B \in \mathsf{K}$, and $\A \leq \B$ fully epic in $\mathsf{K}$. Then $\theta=\textup{Cg}^\B(\theta{\upharpoonright}_{A})$ for every $\theta \in \mathsf{Con}(\B)$.
\end{Lemma}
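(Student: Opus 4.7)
The plan is to obtain the equality by a squeeze: the inclusion $\textup{Cg}^\B(\theta{\upharpoonright}_A) \subseteq \theta$ is automatic since $\theta$ is itself a congruence of $\B$ containing $\theta{\upharpoonright}_A$, and the reverse inclusion will be forced by the fact that $\A\leq\B$ is epic in $\mathsf{K}$, via Proposition~\ref{Prop : weak ES vs fully epic}.

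In detail, fix $\theta \in \mathsf{Con}(\B)$ and set $\phi \coloneqq \textup{Cg}^\B(\theta{\upharpoonright}_A)$. First I would record the trivial inclusion $\phi \subseteq \theta$ and then verify that $\phi{\upharpoonright}_A = \theta{\upharpoonright}_A$: the inclusion $\theta{\upharpoonright}_A \subseteq \phi{\upharpoonright}_A$ follows because $\theta{\upharpoonright}_A \subseteq \phi$ and $\theta{\upharpoonright}_A \subseteq A\times A$, while the reverse inclusion is immediate from $\phi \subseteq \theta$. Hence $\phi$ and $\theta$ are two congruences of $\B$ with identical restrictions to $A$.

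Now I would argue by contradiction: assume $\phi \ne \theta$. Since $\mathsf{K}$ is a variety we have $\mathsf{Con}_\mathsf{K}(\B) = \mathsf{Con}(\B)$, so $\phi,\theta$ are two distinct elements of $\mathsf{Con}_\mathsf{K}(\B)$ with $\phi{\upharpoonright}_A = \theta{\upharpoonright}_A$. This is precisely condition~(\ref{weak ES vs fully epic : item : 1}) in the statement of Theorem~\ref{Thm : weak ES vs fully epic}. By Proposition~\ref{Prop : weak ES vs fully epic}, applied to the full subalgebra $\A\leq\B$, the presence of this condition implies that $\A\leq\B$ fails to be epic in $\mathsf{K}$, contradicting the hypothesis that $\A\leq\B$ is fully epic. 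Therefore $\phi = \theta$, which gives the desired equality.

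I do not foresee a real obstacle here: the lemma is essentially a direct reading of Proposition~\ref{Prop : weak ES vs fully epic}, the only subtlety being the mild observation that for a variety the relative congruence lattice coincides with the full congruence lattice, so that the congruence generated by $\theta{\upharpoonright}_A$ in $\B$ automatically lies in $\mathsf{Con}_\mathsf{K}(\B)$ and is available for comparison with $\theta$.
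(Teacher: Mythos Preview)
Your proposal is correct and follows essentially the same approach as the paper's proof: both establish $\textup{Cg}^\B(\theta{\upharpoonright}_A)\subseteq\theta$, verify that the two congruences have the same restriction to $A$, and then invoke Proposition~\ref{Prop : weak ES vs fully epic} (via condition~(\ref{weak ES vs fully epic : item : 1}) of Theorem~\ref{Thm : weak ES vs fully epic}) to rule out a strict inclusion. Your explicit remark that $\mathsf{Con}_\mathsf{K}(\B)=\mathsf{Con}(\B)$ for a variety is a point the paper leaves implicit.
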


\begin{proof}
Let $\theta \in \mathsf{Con}(\B)$. We first show that $\theta{\upharpoonright}_{A}=\textup{Cg}^\B(\theta{\upharpoonright}_{A}){\upharpoonright}_{A}$. The inclusion $\theta{\upharpoonright}_{A} \subseteq \textup{Cg}^\B(\theta{\upharpoonright}_{A}){\upharpoonright}_{A}$ is straightforward. The other inclusion follows from $\textup{Cg}^\B(\theta{\upharpoonright}_{A}) \subseteq \theta$, which holds because $\theta$ is a congruence of $\B$ containing $\theta {\upharpoonright}_{A}$ and $\textup{Cg}^\B(\theta{\upharpoonright}_{A})$ is the least such. This establishes that $\theta{\upharpoonright}_{A}=\textup{Cg}^\B(\theta{\upharpoonright}_{A}){\upharpoonright}_{A}$.
Since $\A \leq \B$ is fully epic in $\mathsf{K}$, Proposition \ref{Prop : weak ES vs fully epic} implies that $\theta=\textup{Cg}^\B(\theta{\upharpoonright}_{A})$. 
Indeed, if $\theta \ne \textup{Cg}^\B(\theta {\upharpoonright}_A)$, then condition (\ref{weak ES vs fully epic : item : 1}) of Theorem \ref{Thm : weak ES vs fully epic} would hold, contradicting that $\A \leq \B$ is epic.
\end{proof}

\begin{Lemma}\label{Lem : congr can be extended if full}
Let $\mathsf{K}$ be a variety, $\B \in \mathsf{K}$, and $\A \leq \B$ full in $\mathsf{K}$. If $\phi \in \mathsf{Con}(\A)$ and there exists $\theta \in \mathsf{Con}(\B)$ such that $\theta \ne\textup{id}_B$ and $\theta{\upharpoonright}_{A} \subseteq \phi$, then $\phi= \textup{Cg}^\B(\phi){\upharpoonright}_{A}$.
\end{Lemma}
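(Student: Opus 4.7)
The plan is to exhibit an extension $\psi \in \mathsf{Con}(\B)$ of $\phi$ to $\B$ with $\psi{\upharpoonright}_{A} = \phi$. Once such a $\psi$ is found, we have $\phi \subseteq \psi$, hence $\textup{Cg}^\B(\phi) \subseteq \psi$, so $\textup{Cg}^\B(\phi){\upharpoonright}_{A} \subseteq \psi{\upharpoonright}_{A} = \phi$, while the reverse inclusion $\phi \subseteq \textup{Cg}^\B(\phi){\upharpoonright}_{A}$ is immediate from $\phi \subseteq A \times A$. The congruence $\theta$ in the hypothesis will serve as a \emph{bridge} transporting pairs from $B$ back into $A$, so the natural candidate is
\[
\psi \coloneqq \{\langle b_1, b_2 \rangle \in B \times B : \text{there exist } a_1, a_2 \in A \text{ with } \langle b_1, a_1\rangle, \langle b_2, a_2 \rangle \in \theta \text{ and } \langle a_1, a_2 \rangle \in \phi\}.
\]

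I would then verify the congruence axioms for $\psi$ in turn, noting exactly where the two hypotheses on $\theta$ enter. Reflexivity uses fullness of $\A \leq \B$ applied to the nonidentity congruence $\theta$: for any $b \in B$, pick $a \in A$ with $\langle b, a\rangle \in \theta$, then $\langle a, a\rangle \in \phi$. Symmetry is immediate. Transitivity is where the containment $\theta{\upharpoonright}_{A} \subseteq \phi$ is essential: given witnesses $a_1, a_2$ for $\langle b_1, b_2\rangle$ and $a_2', a_3$ for $\langle b_2, b_3\rangle$, the pair $\langle a_2, a_2'\rangle$ lies in $\theta{\upharpoonright}_{A}$ and hence in $\phi$, so $\langle a_1, a_3\rangle \in \phi$ by transitivity in $\A$. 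Compatibility with basic operations uses that $\A \leq \B$: applying an $n$-ary $f$ to witnesses coordinate-wise yields an element of $A$ (since $\A$ is a subalgebra), and the compatibility of $\theta$ in $\B$ and $\phi$ in $\A$ provides the required witnesses for $\langle f^\B(b_1, \dots, b_n), f^\B(b_1', \dots, b_n')\rangle$.

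Finally, I would check the two inclusions that extract the conclusion. The inclusion $\phi \subseteq \psi$ follows by taking the witnesses to be the elements themselves, using reflexivity of $\theta$. For $\psi{\upharpoonright}_{A} \subseteq \phi$: if $\langle a_1, a_2\rangle \in \psi$ with $a_1, a_2 \in A$ and witnesses $a_1', a_2' \in A$, then $\langle a_1, a_1'\rangle, \langle a_2, a_2'\rangle \in \theta{\upharpoonright}_{A} \subseteq \phi$, which combined with $\langle a_1', a_2'\rangle \in \phi$ yields $\langle a_1, a_2\rangle \in \phi$ by transitivity of $\phi$. I expect the only real obstacle to be guessing the correct shape of $\psi$: the interplay between fullness (supplying, for every $b \in B$, an element of $A$ that is $\theta$-related to it) and the containment $\theta{\upharpoonright}_{A} \subseteq \phi$ (forcing any two such witnesses for the same $b$ to coincide modulo $\phi$) is precisely what makes this $\psi$ both a congruence of $\B$ and an extension of $\phi$ with no new $A$-pairs. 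Once the definition is in place, every remaining verification is routine.
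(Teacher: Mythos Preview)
Your proof is correct and essentially identical to the paper's: the congruence $\psi$ you define is exactly the paper's $\eta$ (up to the order in which the symmetric pairs of $\theta$ are written), and your verifications of the equivalence-relation axioms, compatibility, and the two inclusions $\phi \subseteq \psi{\upharpoonright}_A$ and $\psi{\upharpoonright}_A \subseteq \phi$ match the paper's line by line.
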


\begin{proof}
Assume that $\phi \in \mathsf{Con}(\A)$ and $\theta \in \mathsf{Con}(\B)$ are such that $\theta \ne\textup{id}_B$ and $\theta{\upharpoonright}_{A} \subseteq \phi$. We first construct $\eta \in \mathsf{Con}(\B)$ such that $\phi= \eta{\upharpoonright}_{A}$. Let 
\[
\eta \coloneqq \{ \langle b_1,b_2 \rangle \in B \times B : \text{there exists }\langle a_1,a_2 \rangle \in \phi\text{ such that }\langle a_1,b_1 \rangle, \langle a_2,b_2 \rangle \in \theta\}.
\]
First, we show that $\eta$ is an equivalence relation. Since $\theta \ne\textup{id}_B$ and $\A \leq \B$ is full in $\mathsf{K}$, it follows that $\eta$ is reflexive. The symmetry of $\eta$ is a consequence of the symmetry of $\phi$. To prove the transitivity of $\eta$, suppose that $\langle b_1,b_2\rangle, \langle b_2,b_3 \rangle \in \eta$. Then there exist $a_1,a_2,a_2',a_3 \in A$ such that $\langle a_1,a_2\rangle, \langle a_2',a_3 \rangle \in \phi$ and $\langle a_i,b_i \rangle, \langle a_2',b_2\rangle \in \theta$ for $i = 1,2, 3$. Thus, $\langle a_2, a_2' \rangle \in \theta{\upharpoonright}_{A}$, and hence $\langle a_2, a_2' \rangle \in \phi$ because $\theta{\upharpoonright}_{A} \subseteq \phi$. 
Then the transitivity of $\phi$ implies $\langle a_1,a_3\rangle \in \phi$, and the definition of $\eta$ yields that $\langle b_1,b_3\rangle \in \eta$. Therefore, $\eta$ is an equivalence relation.
That $\eta$ is a congruence of $\B$ is then a straightforward consequence of the fact that $\phi$ and $\theta$ are congruences. 
We now show that $\phi= \eta{\upharpoonright}_{A}$. The inclusion from left to right is immediate.
For the other inclusion, assume that $\langle a_1, a_2 \rangle \in \eta$ with $a_1,a_2 \in A$. Then there exists $\langle a_1',a_2' \rangle \in \phi$ such that $\langle a_1',a_1 \rangle, \langle a_2',a_2 \rangle \in \theta$. Thus, $\langle a_1, a_2 \rangle \in \phi$ because $\theta{\upharpoonright}_{A} \subseteq \phi$. Therefore, $\eta \in \mathsf{Con}(\B)$ and $\phi= \eta{\upharpoonright}_{A}$ as desired. 

It only remains to show that $\phi= \textup{Cg}^\B(\phi){\upharpoonright}_{A}$. The inclusion $\phi \subseteq \textup{Cg}^\B(\phi){\upharpoonright}_{A}$ is clear. The other inclusion follows from $\textup{Cg}^\B(\phi){\upharpoonright}_{A} \subseteq \eta{\upharpoonright}_{A} = \phi$. 
\end{proof}

Lastly, we will require the following observation:

\begin{Proposition}\label{Prop : Cg commutes with restrictions}
Let $\mathsf{K}$ be a variety, $\B \in \mathsf{K}$, and $\A \leq \B$ fully epic in $\mathsf{K}$.
For every $\theta_1, \theta_2 \in \mathsf{Con}(\B)$ we have
   \[
(\theta_1 +^\B \theta_2){\upharpoonright}_{A} = \theta_1{\upharpoonright}_{A} +^\A  \theta_2{\upharpoonright}_{A}.
   \] 
\end{Proposition}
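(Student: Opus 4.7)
The plan is to prove the two inclusions separately. Writing $\phi_i \coloneqq \theta_i{\upharpoonright}_A$ and $\psi \coloneqq \phi_1 +^\A \phi_2$, the claim to establish is that $(\theta_1 +^\B \theta_2){\upharpoonright}_A = \psi$.

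The easy inclusion $\psi \subseteq (\theta_1 +^\B \theta_2){\upharpoonright}_A$ is essentially immediate: each $\phi_i$ is a subset of $A \times A$ contained in $\theta_1 +^\B \theta_2$, hence contained in $(\theta_1 +^\B \theta_2){\upharpoonright}_A$, which is a congruence of $\A$; therefore the join $\psi$ fits inside as well.

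For the reverse inclusion, the strategy is to trap $\theta_1 +^\B \theta_2$ inside $\textup{Cg}^\B(\psi)$ and then restrict. By Lemma \ref{Lem : congr are extensions if fully epic} we may write $\theta_i = \textup{Cg}^\B(\phi_i)$; since $\phi_1, \phi_2 \subseteq \psi$, taking joins in $\mathsf{Con}(\B)$ yields
\[
\theta_1 +^\B \theta_2 = \textup{Cg}^\B(\phi_1 \cup \phi_2) \subseteq \textup{Cg}^\B(\psi).
\]
Restricting both sides to $A$ gives $(\theta_1 +^\B \theta_2){\upharpoonright}_A \subseteq \textup{Cg}^\B(\psi){\upharpoonright}_A$. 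To collapse the right-hand side back to $\psi$, I would invoke Lemma \ref{Lem : congr can be extended if full} with $\phi := \psi$; this requires producing some $\theta \in \mathsf{Con}(\B)$ with $\theta \ne \textup{id}_B$ and $\theta{\upharpoonright}_A \subseteq \psi$.

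The main (and only) delicate point is securing such a witness $\theta$. If both $\theta_1 = \theta_2 = \textup{id}_B$, then no nonidentity candidate is directly at hand, but in that case the statement is trivially true, as both sides reduce to $\textup{id}_A$. Otherwise, at least one $\theta_i$ is distinct from $\textup{id}_B$ and works as $\theta$ itself, since $\theta_i{\upharpoonright}_A = \phi_i \subseteq \psi$. After this small case split Lemma \ref{Lem : congr can be extended if full} fires to give $\textup{Cg}^\B(\psi){\upharpoonright}_A = \psi$, and the desired inclusion follows. Notice that congruence permutability plays no role here; the identity is a general consequence of the fully epic hypothesis together with the two preceding lemmas.
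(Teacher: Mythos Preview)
Your proof is correct and follows essentially the same route as the paper's: both arguments hinge on Lemma~\ref{Lem : congr are extensions if fully epic} (to rewrite each $\theta_i$ as $\textup{Cg}^\B(\theta_i{\upharpoonright}_A)$) and Lemma~\ref{Lem : congr can be extended if full} (to collapse $\textup{Cg}^\B(\psi){\upharpoonright}_A$ back to $\psi$), with a small case split to secure the nonidentity witness needed for the latter. The only cosmetic difference is that the paper proves the equality $\textup{Cg}^\B(\psi) = \theta_1 +^\B \theta_2$ directly rather than splitting into two inclusions, and disposes of the trivial case by assuming $\theta_1 \ne \textup{id}_B$ (rather than both $\theta_i = \textup{id}_B$).
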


\begin{proof} 
The equality trivially holds when $\theta_1=\textup{id}_B$, so we may assume that $\theta_1 \ne \textup{id}_B$.
Since $\A \leq \B$ is full in $\mathsf{K}$ and $\theta_1{\upharpoonright}_{A} \subseteq \theta_1{\upharpoonright}_{A} +^\A  \theta_2{\upharpoonright}_{A}$, Lemma \ref{Lem : congr can be extended if full} implies
\[
\theta_1{\upharpoonright}_{A} +^\A  \theta_2{\upharpoonright}_{A} = \textup{Cg}^\B(\theta_1{\upharpoonright}_{A} +^\A  \theta_2{\upharpoonright}_{A}){\upharpoonright}_{A}.
\]
Consequently, to prove the statement, it suffices to show that
\[
\textup{Cg}^\B(\theta_1{\upharpoonright}_{A} +^\A  \theta_2{\upharpoonright}_{A}) = \theta_1 +^\B \theta_2.
\] 
We have that
\[
\textup{Cg}^\B(\theta_1{\upharpoonright}_{A} +^\A  \theta_2{\upharpoonright}_{A})=\textup{Cg}^\B(\theta_1{\upharpoonright}_{A}) +^\B  \textup{Cg}^\B(\theta_2{\upharpoonright}_{A}) = \theta_1 +^\B \theta_2,
\]
where the first equality follows from the fact that $\textup{Cg}^\B(\theta_1{\upharpoonright}_{A} +^\A  \theta_2{\upharpoonright}_{A})$ 
and $\textup{Cg}^\B(\theta_1{\upharpoonright}_{A}) +^\B  \textup{Cg}^\B(\theta_2{\upharpoonright}_{A})$
are both the smallest congruence of $\B$ containing $\theta_1{\upharpoonright}_{A}$ and $\theta_2{\upharpoonright}_{A}$, 
and the second equality is a consequence of Lemma \ref{Lem : congr are extensions if fully epic}. 
\end{proof}

We are now ready to prove Proposition  \ref{Prop : cong permutable x}.

\begin{proof}[Proof of Proposition  \ref{Prop : cong permutable x}.]
Suppose that  $\A \leq \B$ is fully epic in $\mathsf{K}$.\ We need to show that $\B$ is FSI. By Proposition \ref{Prop : RFSI} it suffices to prove that $\textup{id}_B \in \mathsf{Irr}_{\mathsf{K}}(\B)$. First observe that $\textup{id}_B \ne B \times B$ because $\A \leq \B$ is proper. Then let $\theta_1, \theta_2 \in \mathsf{Con}(\B)$ be such that $\textup{id}_B=\theta_1 \cap \theta_2$. Suppose, with a view to contradiction, that $\theta_1, \theta_2 \ne\textup{id}_B$. 
As $\A \leq \B$ is full in $\mathsf{K}$, there exist $b \in B - A$ and $a_1,a_2 \in A$ such that $\langle a_1,b\rangle \in \theta_1$ and $\langle a_2,b\rangle \in \theta_2$. Thus, $\langle a_1,a_2 \rangle \in (\theta_1 +^\B \theta_2){\upharpoonright}_{A}$.
Proposition \ref{Prop : Cg commutes with restrictions} implies that $\langle a_1,a_2 \rangle \in \theta_1{\upharpoonright}_{A} +^\A  \theta_2{\upharpoonright}_{A}$. Since $\mathsf{K}$ is congruence permutable and $\A \in \mathsf{K}$, we can apply Proposition \ref{Prop : permutable joins relational} obtaining
\[
\langle a_1,a_2 \rangle \in \theta_1{\upharpoonright}_{A} +^\A  \theta_2{\upharpoonright}_{A} = \theta_1{\upharpoonright}_{A} \circ \theta_2{\upharpoonright}_{A}.
\]
Therefore, there exists $a \in A$ such that $\langle a_1,a \rangle \in \theta_1$ and $\langle a,a_2 \rangle \in \theta_2$. From $\langle a_1,b\rangle \in \theta_1$ and $\langle a_2,b\rangle \in \theta_2$ it follows that $\langle a,b \rangle \in \theta_1 \cap \theta_2$. Since $\theta_1 \cap \theta_2=\textup{id}_B$, we obtain that $b=a \in A$, a contradiction with $b \notin A$. Thus, $\theta_i =\textup{id}_B$ for some $i=1,2$. Hence, we conclude that $\textup{id}_B \in \mathsf{Irr}_{\mathsf{K}}(\B)$.
\end{proof}

\begin{Remark}
Theorem \ref{Thm : CP locally finite} cannot be strengthened by dropping the assumption that the variety $\mathsf{K}$ is congruence permutable.\ For the variety of distributive lattices $\mathsf{DL}$ lacks the weak ES property 
 because it is finitely generated  (Example~\ref{Exa : distributive lattices}).  On the other hand,  its only FSI member (i.e., the two-element chain) lacks proper subalgebras that are epic in $\mathsf{DL}$. 

Similarly, Proposition \ref{Prop : cong permutable x} cannot be strengthened by assuming that $\A \leq \B$ is only full in $\mathsf{K}$ (as opposed to fully \emph{epic} in $\mathsf{K}$). For let $\mathsf{BA}$ be the variety of Boolean algebras (which is congruence permutable). Moreover, let $\B$ be the four-element Boolean algebra and $\A$ its two-element subalgebra. Then $\A \leq \B$ is full in $\mathsf{BA}$, but $\B$ is not FSI.
\qed
\end{Remark}

\section{Arithmeticity is often necessary}

The aim of this section is to establish the following:

\begin{Theorem}\label{Thm : weak ES implies CP}
Let $\mathsf{K}$ be a congruence distributive quasivariety for which $\mathsf{K}_{\textup{\textsc{rfsi}}}$ is closed under nontrivial subalgebras. If $\mathsf{K}$ has the weak ES property, then $\VVV(\mathsf{K})$ is arithmetical.
\end{Theorem}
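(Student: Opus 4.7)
The plan is to argue by contraposition. Since $\mathsf{K}$ is congruence distributive, so is the variety $\VVV(\mathsf{K})$, so the proof of arithmeticity reduces to congruence permutability. Assume therefore that $\VVV(\mathsf{K})$ is not congruence permutable; the goal is to exhibit a finitely generated member of $\mathsf{K}$ carrying a fully epic proper subalgebra, which by Corollary~\ref{Cor : fully epic = weak ES} contradicts the weak ES property.

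The first step is to localize the failure of permutability inside $\mathsf{K}_{\textsc{rfsi}}$. Because $\VVV(\mathsf{K})$ fails congruence permutability, there is a finitely generated member of $\VVV(\mathsf{K})$ on which two congruences do not permute; by the Subdirect Decomposition Theorem~\ref{Thm : Subdirect Decomposition} and J\'onsson's Lemma (Theorem~\ref{Jonsson Lemma}), the failure descends to some finitely generated FSI algebra lying in $\HHH\SSS\PPU(\mathsf{K})$. Using the assumption that $\mathsf{K}_{\textsc{rfsi}}$ is closed under nontrivial subalgebras — together with the fact that $\mathsf{K}$ itself is closed under $\III,\SSS,\PPP$, and $\PPU$ — I would arrange for the failure to be witnessed inside an algebra $\A \in \mathsf{K}_{\textsc{rfsi}}$, yielding elements $a,b,c \in A$ and $\theta_1, \theta_2 \in \mathsf{Con}_\mathsf{K}(\A)$ with $\langle a,b\rangle \in \theta_1$ and $\langle b,c\rangle \in \theta_2$ but no $d \in A$ satisfying $\langle a,d\rangle \in \theta_2$ and $\langle d,c\rangle \in \theta_1$.

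The second step is the construction of the witness. I would form the subdirect product $\B \leq \A \times \A$ generated by the diagonal $\Delta_\A$ together with the single pair $\langle a,c\rangle$, set $\A' \coloneqq \Delta_\A \cong \A$, and aim to prove that $\A' \leq \B$ is proper, almost total, and fully epic in $\mathsf{K}$. Properness and almost totality are immediate from the construction (note that $a \ne c$, since $\theta_1,\theta_2$ cannot permute if $a=c$). For fullness, Theorem~\ref{Thm : CD varieties} decomposes any nonidentity $\mathsf{K}$-congruence of $\B$ as the restriction of a product congruence $\phi_1 \times \phi_2$ with $\phi_i \in \mathsf{Con}_\mathsf{K}(\A)$; the RFSI assumption on $\A$, combined with a case analysis on which coordinate contributes a nonidentity factor, should force $\langle a,c\rangle$ to be collapsed with some element of $\Delta_\A$.

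The main obstacle will be proving that $\A' \leq \B$ is \emph{epic} in $\mathsf{K}$. Here I would apply Proposition~\ref{Prop : weak ES vs fully epic}: the non-permutability data above must be converted into either two distinct $\mathsf{K}$-congruences of $\B$ with the same restriction to $\A'$, or two distinct embeddings of $\B$ into an RSI member of $\mathsf{K}$ agreeing on $\A'$. Concretely, I expect the pair $\langle a,c\rangle$ to admit two essentially different ``routes'' to the diagonal — one through $\langle b,b\rangle$ via $(\theta_1,\theta_2)$ and a second forced by the absence of any $d$ on the reversed route — which should yield two congruences of $\B$ whose restrictions to $\Delta_\A$ coincide but which differ on $\langle a,c\rangle$. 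Packaging this data cleanly while remaining inside $\mathsf{K}$ is the delicate step, and is precisely where the closure of $\mathsf{K}_{\textsc{rfsi}}$ under nontrivial subalgebras enters: it guarantees that the relevant quotients and subalgebras of $\A$ and $\B$ produced along the way remain in $\mathsf{K}$, so that the final configuration satisfies the hypotheses of Theorem~\ref{Thm : weak ES vs fully epic}.
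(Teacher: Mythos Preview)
Your proposal has the contraposition oriented correctly at the outset, but the final step is logically inverted. You want $\A' \leq \B$ to be \emph{epic} in $\mathsf{K}$ (so that Corollary~\ref{Cor : fully epic = weak ES} contradicts weak ES), yet you propose to establish this by producing ``two congruences of $\B$ whose restrictions to $\Delta_\A$ coincide but which differ on $\langle a,c\rangle$.'' By Proposition~\ref{Prop : weak ES vs fully epic} that is exactly condition~(\ref{weak ES vs fully epic : item : 1}) of Theorem~\ref{Thm : weak ES vs fully epic}, which characterizes when a full subalgebra is \emph{not} epic. So the non-permutability data you plan to extract would prove the opposite of what you need; to show epicness via that proposition you would instead have to rule out \emph{all} such pairs of congruences and \emph{all} such pairs of embeddings, and nothing in your construction does that.

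The localization step is also a genuine gap. Congruence permutability is a Maltsev condition: its failure lives in the $3$-generated free algebra of $\VVV(\mathsf{K})$, and there is no general mechanism for transporting it to an RFSI member of $\mathsf{K}$. Non-permuting congruences do not descend along subdirect decompositions (quotients of non-CP algebras may well be CP), Theorem~\ref{Jonsson Lemma} presupposes that $\VVV(\mathsf{K})$ is congruence distributive (relative congruence distributivity of $\mathsf{K}$ does not automatically give this, contrary to your opening sentence), and even granting it, the FSI algebras land only in $\HHH\SSS\PPU(\mathsf{K})$, not in $\mathsf{K}$. The paper sidesteps all of this by running the argument the other way around: it works in $\boldsymbol{T} \times \boldsymbol{T} \times \boldsymbol{T}$ with $\boldsymbol{T} = \boldsymbol{T}_\mathsf{K}(x,y)$ via the Pixley-term criterion (Proposition~\ref{Prop : congruence permutable}), passes to a full quotient $\A/\theta \leq \B/\theta$ by Proposition~\ref{Prop : full quotients}, and then \emph{uses} weak ES to conclude that this full subalgebra is \emph{not} epic. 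Corollary~\ref{Cor : A RFSI} then forces $\A/\theta \in \SSS(\mathsf{K}_{\textsc{rsi}})$; only here does closure of $\mathsf{K}_{\textsc{rfsi}}$ under nontrivial subalgebras enter, making $\theta{\upharpoonright}_A$ meet irreducible so that the product-congruence decomposition of Theorem~\ref{Thm : CD varieties} yields the contradiction.
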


We will use the following syntactical description of arithmetical varieties (see, e.g., \cite[Thm.\ II.12.5]{BuSa00}):

\begin{Theorem}\label{Thm : Pixley arithmetical}
A variety $\mathsf{K}$ is arithmetical iff it has a \emph{Pixley term}, that is, a term $\varphi(x, y, z)$ such that
\[
\mathsf{K} \vDash \varphi(x, y, x) \thickapprox \varphi(x, y, y) \thickapprox \varphi(y, y, x) \thickapprox x.
\]
\end{Theorem}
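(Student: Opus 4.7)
The plan is to prove the two directions separately, using standard Mal'cev-style manipulations of the congruence lattice.

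For the backward direction, assume that $\mathsf{K}$ has a Pixley term $\varphi$. The identities $\varphi(x,y,y) \thickapprox x \thickapprox \varphi(y,y,x)$ already exhibit $\varphi$ as a Mal'cev term, so $\mathsf{K}$ is congruence permutable. For congruence distributivity, it suffices to verify the inclusion $\alpha \cap (\beta +^\A \gamma) \subseteq (\alpha \cap \beta) +^\A (\alpha \cap \gamma)$ for arbitrary $\A \in \mathsf{K}$ and $\alpha, \beta, \gamma \in \mathsf{Con}(\A)$. Given $\langle a,b \rangle$ in the left-hand side, Proposition \ref{Prop : permutable joins relational} produces $c \in A$ with $\langle a,c \rangle \in \beta$ and $\langle c,b \rangle \in \gamma$. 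Set $d \coloneqq \varphi^\A(a,c,b)$. Applying compatibility of $\alpha$ with $\varphi$ to $\langle a, b \rangle \in \alpha$ (and using $\varphi(x,y,x) \thickapprox x$) gives $\langle a, d \rangle, \langle d, b \rangle \in \alpha$; applying compatibility of $\gamma$ to $\langle c, b \rangle \in \gamma$ (with $\varphi(x,y,y) \thickapprox x$) gives $\langle a, d \rangle \in \gamma$; and applying compatibility of $\beta$ to $\langle a, c \rangle \in \beta$ (with $\varphi(y,y,x) \thickapprox x$) gives $\langle d, b \rangle \in \beta$. Thus $\langle a,d \rangle \in \alpha \cap \gamma$ and $\langle d,b \rangle \in \alpha \cap \beta$, yielding the desired inclusion.

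For the forward direction, suppose that $\mathsf{K}$ is arithmetical and work in the free algebra $F \coloneqq F_\mathsf{K}(u,v,w)$ on three generators. Put $\alpha \coloneqq \textup{Cg}^F(u,v)$, $\beta \coloneqq \textup{Cg}^F(v,w)$, and $\gamma \coloneqq \textup{Cg}^F(u,w)$. Since $F / \textup{Cg}^F(u,v) \cong F_\mathsf{K}(v,w)$ (and analogously for the other two principal congruences), an element $\varphi \in F$ witnesses the three Pixley identities precisely when $\langle u, \varphi \rangle \in \beta \cap \gamma$ and $\langle \varphi, w \rangle \in \alpha$ (the membership $\langle \varphi, w \rangle \in \gamma$ then comes for free, since $\langle u, w \rangle \in \gamma$). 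Existence of such $\varphi$ thus reduces to the relational containment $\langle u, w \rangle \in (\beta \cap \gamma) \circ (\alpha \cap \gamma)$. Now $\langle u, w \rangle \in \gamma$ holds trivially and $\langle u, w \rangle \in \alpha +^F \beta$ holds because $u \mathrel{\alpha} v \mathrel{\beta} w$, so congruence distributivity gives $\langle u, w \rangle \in (\alpha \cap \gamma) +^F (\beta \cap \gamma)$ and congruence permutability upgrades this join to the composition $(\beta \cap \gamma) \circ (\alpha \cap \gamma)$, producing $\varphi$.

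The main technical subtlety is the bookkeeping in the forward direction: one must match each of the three Pixley identities with the correct principal congruence on $F$, noticing in particular that the identity $\varphi(y,y,x) \thickapprox x$ forces $\varphi$ to be congruent to $w$ (rather than $u$) modulo $\textup{Cg}^F(u,v)$. Once this syntactic-to-semantic dictionary is set up correctly, both directions reduce to short applications of the distributivity and permutability laws on $\mathsf{Con}(F)$ and of compatibility of terms with congruences.
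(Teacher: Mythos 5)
Your proof is correct; the paper gives no argument of its own for this statement, simply citing the standard reference \cite[Thm.\ II.12.5]{BuSa00}, and your two directions (the Mal'cev-term observation plus direct verification of the distributive inequality via $d=\varphi^\A(a,c,b)$, and the free-algebra argument on $\boldsymbol{T}_\mathsf{K}(u,v,w)$ combining distributivity with permutability) reproduce exactly that classical proof of Pixley's theorem. Nothing to add.
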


We recall that quasivarieties contain free algebras. More precisely, for every quasivariety $\mathsf{K}$ and nonempty set $X$ there exists an algebra $\boldsymbol{T}_\mathsf{K}(X)$ that is free in $\mathsf{K}$ over $X$ (see, e.g., \cite[Prop.\ 2.1.10]{Go98a}). When $X = \{ x, y \}$, we will write $\boldsymbol{T}_\mathsf{K}(x, y)$ instead of $\boldsymbol{T}_\mathsf{K}(X)$. 

\begin{Proposition}\label{Prop : congruence permutable}
Let $\mathsf{K}$ be a quasivariety and $\boldsymbol{T} \coloneqq \boldsymbol{T}_\mathsf{K}(x, y)$.   Then $\VVV(\mathsf{K})$ is arithmetical iff
\[
\langle x, x, x \rangle \in \textup{Sg}^{\boldsymbol{T} \times \boldsymbol{T}\times \boldsymbol{T}}(\{ \langle x, x, y \rangle, \langle y , y, y\rangle, \langle x, y, x\rangle\}).
\]
\end{Proposition}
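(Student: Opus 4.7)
My plan is to reduce the statement to the existence of a Pixley term via Theorem \ref{Thm : Pixley arithmetical} and then reinterpret the Pixley identities as a single subalgebra-membership condition in $\boldsymbol{T}\times\boldsymbol{T}\times\boldsymbol{T}$. I first note that $\boldsymbol{T}_{\mathsf{K}}(x,y)$ coincides with $\boldsymbol{T}_{\VVV(\mathsf{K})}(x,y)$, because a quasivariety and the variety it generates validate the same equations; consequently, for any terms $s, t$ in the variables $x, y$,
\[
s^{\boldsymbol{T}} = t^{\boldsymbol{T}} \iff \mathsf{K} \vDash s \thickapprox t \iff \VVV(\mathsf{K}) \vDash s \thickapprox t.
\]

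Next, I would unwind the subalgebra-membership condition. An element of $\boldsymbol{T}^3$ lies in $\textup{Sg}^{\boldsymbol{T}^3}(\{\langle x,x,y\rangle,\langle y,y,y\rangle,\langle x,y,x\rangle\})$ precisely when it has the form $\varphi^{\boldsymbol{T}^3}(\langle x,x,y\rangle,\langle y,y,y\rangle,\langle x,y,x\rangle)$ for some ternary term $\varphi(x,y,z)$. Since operations in a direct product are computed componentwise, this element equals
\[
\bigl\langle \varphi^{\boldsymbol{T}}(x,y,x),\ \varphi^{\boldsymbol{T}}(x,y,y),\ \varphi^{\boldsymbol{T}}(y,y,x)\bigr\rangle.
\]
Hence $\langle x,x,x\rangle$ belongs to this subalgebra if and only if there exists a ternary term $\varphi$ such that $\varphi^{\boldsymbol{T}}(x,y,x)=\varphi^{\boldsymbol{T}}(x,y,y)=\varphi^{\boldsymbol{T}}(y,y,x)=x$ in $\boldsymbol{T}$.

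By the displayed equivalence above, the latter is the same as $\mathsf{K}\vDash \varphi(x,y,x)\thickapprox\varphi(x,y,y)\thickapprox\varphi(y,y,x)\thickapprox x$, which in turn is the same as $\VVV(\mathsf{K})\vDash \varphi(x,y,x)\thickapprox\varphi(x,y,y)\thickapprox\varphi(y,y,x)\thickapprox x$. Applying Theorem \ref{Thm : Pixley arithmetical}, the existence of such a $\varphi$ is equivalent to $\VVV(\mathsf{K})$ being arithmetical, which finishes the proof.

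The only subtle point, and what I would take as the main obstacle to flag, is the identification of free algebras: one must verify that $\boldsymbol{T}_{\mathsf{K}}(x,y)$ and $\boldsymbol{T}_{\VVV(\mathsf{K})}(x,y)$ agree, so that equalities among terms in $\boldsymbol{T}$ can be translated back and forth between identities of $\mathsf{K}$ and identities of $\VVV(\mathsf{K})$. Once this is in hand, the remainder is the routine bookkeeping described above.
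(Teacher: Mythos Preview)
Your proof is correct and essentially identical to the paper's: both unwind the subalgebra-membership condition to the existence of a ternary term $\varphi$ with $\varphi^{\boldsymbol{T}}(x,y,x)=\varphi^{\boldsymbol{T}}(x,y,y)=\varphi^{\boldsymbol{T}}(y,y,x)=x$, invoke the identification $\boldsymbol{T}_{\mathsf{K}}(x,y)=\boldsymbol{T}_{\VVV(\mathsf{K})}(x,y)$ to pass to identities of $\VVV(\mathsf{K})$, and finish with Theorem~\ref{Thm : Pixley arithmetical}. The only difference is the order in which you present the two ingredients.
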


\begin{proof}
Clearly, $\langle x, x, x \rangle \in \textup{Sg}^{\boldsymbol{T} \times \boldsymbol{T}\times \boldsymbol{T}}(\{ \langle x, x, y \rangle, \langle y , y, y\rangle, \langle x, y, x\rangle\})$ iff there exists a term $\varphi(x, y, z)$ such that
\[
\varphi^{\boldsymbol{T}}(x, y, x) = \varphi^{\boldsymbol{T}}(x, y, y) = \varphi^{\boldsymbol{T}}(y, y, x) = x.
\]
As $\boldsymbol{T}_\mathsf{K}(x, y) = \boldsymbol{T}_{\VVV(\mathsf{K})}(x, y)$ (see, e.g., \cite[Lem.\ 2.1.13]{Go98a}), we can harmlessly replace $\boldsymbol{T}$ by $\boldsymbol{T}_{\VVV(\mathsf{K})}(x, y)$ in the above display. When phrased in this way, the display becomes equivalent to the demand that
\[
\VVV(\mathsf{K}) \vDash \varphi(x, y, x) \thickapprox \varphi(x, y, y) \thickapprox \varphi(y, y, x) \thickapprox x
\]
(see, e.g., \cite[Thm.\ II.11.4]{BuSa00}). Lastly, the existence of a term $\varphi$ satisfying the above condition is equivalent to the demand that $\VVV(\mathsf{K})$ is arithmetical by Theorem \ref{Thm : Pixley arithmetical}.
\end{proof}

We are now ready to prove Theorem \ref{Thm : weak ES implies CP}.

\begin{proof}[Proof of Theorem \ref{Thm : weak ES implies CP}]
Let $\mathsf{K}$ be a congruence distributive quasivariety for which $\mathsf{K}_{\textup{\textsc{rfsi}}}$ is closed under nontrivial subalgebras. Suppose, with a view to contradiction, that $\mathsf{K}$ has the weak ES property and that $\VVV(\mathsf{K})$ is not arithmetical. Then let $\boldsymbol{T} \coloneqq \boldsymbol{T}_\mathsf{K}(x, y)$ and define
\[
\A \coloneqq \textup{Sg}^{\boldsymbol{T} \times \boldsymbol{T}\times \boldsymbol{T}}(\{ \langle x, x, y \rangle, \langle y , y, y\rangle, \langle x, y, x\rangle\}) \, \, \text{ and } \, \, \B \coloneqq \textup{Sg}^{\boldsymbol{T} \times \boldsymbol{T} \times \boldsymbol{T}}(A \cup \{ \langle x, x, x \rangle\}).
\]
The definition of $\B$ guarantees that $\A \leq \B$ is almost total. Furthermore, the inclusion $\A \leq \B$ is proper because of Proposition \ref{Prop : congruence permutable} and the assumption that $\VVV(\mathsf{K})$ is not arithmetical. Therefore, Proposition \ref{Prop : full quotients} yields some $\theta \in \mathsf{Con}_\mathsf{K}(\B)$ such that $\A/\theta \leq \B/\theta$ is full in $\mathsf{K}$. Since $\A/\theta \leq \B/\theta$ is proper and $\B/\theta$ is generated by $A/\theta \cup \{ \langle x, x, x \rangle/\theta\}$, we have
\begin{equation}\label{Eq : weak ES implies CP 1}
\langle x, x, x \rangle / \theta \notin A / \theta.
\end{equation}

Now, recall that $\boldsymbol{T}$ is generated by $x$ and $y$. Since by construction $\B \leq \boldsymbol{T} \times \boldsymbol{T}\times \boldsymbol{T}$ and the triples
$\langle x, x, x \rangle$ and $\langle y, y, y \rangle$ belong to $\B$, the canonical projections $p_1, p_2, p_3 \colon \B \to \boldsymbol{T}$ are all surjective. Therefore, $\B \leq \boldsymbol{T} \times \boldsymbol{T} \times \boldsymbol{T}$ is a subdirect product. As $\mathsf{K}$ is congruence distributive by assumption, we can apply Theorem \ref{Thm : CD varieties} obtaining some $\theta_1, \theta_2, \theta_3 \in \mathsf{Con}_\mathsf{K}(\boldsymbol{T})$ such that $\theta = (\theta_1 \times \theta_2 \times \theta_3){\upharpoonright}_{B}$. 

Observe that
  \begin{align*}
\theta{\upharpoonright}_{A} &= (\theta_1 \times \theta_2\times \theta_3){\upharpoonright}_{A} = ((\theta_1 \times T^2\times T^2) \cap (T^2 \times \theta_2\times T^2) \cap (T^2 \times  T^2 \times\theta_3)){\upharpoonright}_{A} \\
&= (\theta_1 \times T^2\times T^2){\upharpoonright}_{A} \cap (T^2 \times \theta_2\times T^2){\upharpoonright}_{A} \cap  (T^2 \times T^2 \times\theta_3){\upharpoonright}_{A},
  \end{align*}
where the first equality holds because $\theta = (\theta_1 \times \theta_2 \times \theta_3){\upharpoonright}_{B}$ and $\A \leq \B$, the second because $\theta_1 \times \theta_2 \times \theta_3 = (\theta_1 \times T^2\times T^2) \cap (T^2 \times \theta_2\times T^2) \cap (T^2 \times  T^2 \times\theta_3)$, and the third is straightforward.

Recall that $\A / \theta \leq \B / \theta$ is full in $\mathsf{K}$. On the other hand, as $\mathsf{K}$ has the weak ES property by assumption, $\A / \theta \leq \B / \theta$ is not epic in $\mathsf{K}$. Therefore, we can apply Corollary \ref{Cor : A RFSI} obtaining that $\A / \theta \in \SSS(\mathsf{K}_{\textsc{rsi}}) \subseteq \SSS(\mathsf{K}_{\textsc{rfsi}})$. Since $\mathsf{K}_{\textsc{rfsi}}$ is closed under nontrivial subalgebras by assumption, $\A / \theta$ is trivial or RFSI. Thus, $\theta{\upharpoonright}_{A} = A \times A$ or $\theta{\upharpoonright}_{A} \in \mathsf{Irr}_{\mathsf{K}}(\A)$, and hence the above display implies that
\[
\theta{\upharpoonright}_{A} = (\theta_1 \times T^2\times T^2){\upharpoonright}_{A} \, \, \text{ or } \, \, \theta{\upharpoonright}_{A} =(T^2 \times \theta_2\times T^2){\upharpoonright}_{A} \, \, \text{ or } \, \, \theta{\upharpoonright}_{A} = (T^2 \times  T^2 \times\theta_3){\upharpoonright}_{A}.
\]
We show that $\langle x,y \rangle \in \theta_2 \cup \theta_3$ in any of these three cases.  If $\theta{\upharpoonright}_{A} = (\theta_1 \times T^2\times T^2){\upharpoonright}_{A}$, then $\langle x,x,y \rangle/\theta = \langle x,y,x \rangle/\theta$. If $\theta{\upharpoonright}_{A} = (T^2 \times \theta_2\times T^2){\upharpoonright}_{A}$, then $\langle y,y,y \rangle/\theta = \langle x,y,x \rangle/\theta$. If $\theta{\upharpoonright}_{A} = (T^2 \times  T^2 \times\theta_3){\upharpoonright}_{A}$, then $\langle x,x,y \rangle/\theta = \langle y,y,y \rangle/\theta$. Therefore, in all the three cases we have $\langle x,y \rangle \in \theta_2 \cup \theta_3$ because $\theta = (\theta_1 \times \theta_2 \times \theta_3){\upharpoonright}_{B}$.

We conclude the proof by showing that $\langle x,y \rangle \in \theta_2 \cup \theta_3$ implies $\langle x, x, x \rangle / \theta \in A / \theta$, a contradiction with~(\ref{Eq : weak ES implies CP 1}). If $\langle x,y \rangle \in \theta_2$, then $\langle x, x, x \rangle / \theta = \langle x, y, x \rangle / \theta\in A / \theta$. Otherwise, $\langle x,y \rangle \in \theta_3$, which yields $\langle x, x, x \rangle / \theta = \langle x, x, y \rangle / \theta\in A / \theta$. 
\end{proof}

\begin{Remark}
Theorem \ref{Thm : weak ES implies CP} cannot be strengthened by dropping any of the assumptions on $\mathsf{K}$: congruence distributivity, the closure of $\mathsf{K}_{\textsc{fsi}}$ under nontrivial subalgebras, and the weak ES property. For recall that the varieties of distributive lattices $\mathsf{DL}$ and semilattices $\mathsf{SL}$ are not congruence permutable. However,
\benroman
\item $\mathsf{SL}$ has the ES property and $\mathsf{SL}_{\textsc{fsi}}$  is closed under nontrivial subalgebras;
\item the variety of lattices is congruence distributive and has the ES property (see, e.g., \cite[p.\ 102]{KMPT83});
\item $\mathsf{DL}$ is congruence distributive and $\mathsf{DL}_{\textsc{fsi}}$  is closed under nontrivial subalgebras. \qed
\eroman
\end{Remark}

\begin{exa}\label{Cor : final corrolary}
A variety $\mathsf{K}$ is said to be \emph{filtral} \cite{Magari1969} when for every subdirect product $\A \leq \prod_{i \in I}\A_i$ where $\{ \A_i : i \in I \}$ is a family of SI members of $\mathsf{K}$ and every $\theta \in \mathsf{Con}(\A)$ there exists a filter $F$ over
$I$ such that
\[
\theta = \{ \langle a, b \rangle \in A \times A : \llbracket a = b \rrbracket \in F \},
\]
where $\llbracket a = b \rrbracket \coloneqq \{ i \in I : a(i) = b(i) \}$. 
Moreover, a variety $\mathsf{K}$ is a \emph{discriminator variety} \cite{We78} when $\mathsf{K} = \VVV(\mathsf{M})$ for some class of algebras $\mathsf{M}$ for which there exists a term $\varphi(x, y, z)$ such that for every $\A \in \mathsf{M}$ and $a, b, c \in A$,
\[
		\varphi^\A(a, b, c) \coloneqq \begin{cases}
			c & \text{ if } a = b;\\
			a & \text{ otherwise.}
		\end{cases}
		\]
Notably, discriminator varieties coincide with the congruence permutable filtral varieties \cite{BKP-EDPC-II,FK83}. 

We will show that every filtral variety $\mathsf{K}$ with the weak ES property is a discriminator variety (this can also be inferred from \cite{CamperVagg}). Indeed, by what we observed above, it suffices to verify that $\mathsf{K}$ is congruence permutable. But this is a straightforward consequence of Theorem \ref{Thm : weak ES implies CP} because every filtral variety is congruence distributive and its class of FSI members is closed under nontrivial subalgebras (see, e.g., \cite[Cor.\ 6.5(i, iv)]{CaRa17}).
\qed
\end{exa}

\paragraph{\bfseries Acknowledgements.}
The second author was supported by the ayuda PREP$2022$-$000927$ financiada por MICIU/AEI/$10$.$13039$/$501100011033$ y por FSE+ as part of the proyecto PID$2022$-$141529$NB-C$21$ de investigaci\'on financiado por MICIU/AEI/$10$.$13039$/$501100011033$ y por FEDER, UE.

The third author was supported by the proyecto PID$2022$-$141529$NB-C$21$ de investigaci\'on financiado por MICIU/AEI/$10$.$13039$/$501100011033$ y por FEDER, UE. He was also supported by the Research Group in Mathematical Logic, $2021$SGR$00348$ funded by the Agency for Management of University and Research Grants of the Government of Catalonia, as well as by the MSCA-RISE-Marie Skłodowska-Curie Research and Innovation Staff Exchange (RISE) project MOSAIC $101007627$ funded by Horizon $2020$ of the European Union.

\bibliographystyle{plain}

\begin{thebibliography}{10}

\bibitem{Ab67}
J.C. Abbott.
\newblock Implicational algebras.
\newblock {\em Bull. Math. Soc. Sci. Math. R. S. Roumanie (N.S.)}, 11 (59), No. 1:3--23, 1967.

\bibitem{Bacsich47}
P.D. Bacsich.
\newblock Model theory of epimorphisms.
\newblock {\em Canad. Math. Bull.}, 17:471--477, 1974.

\bibitem{Be11g}
C.~Bergman.
\newblock {\em Universal Algebra: Fundamentals and Selected Topics}.
\newblock Chapman \& Hall Pure and Applied Mathematics. Chapman and Hall/CRC,
  2011.

\bibitem{BMR16}
G.~Bezhanishvili, T.~Moraschini, and J.G. Raftery.
\newblock Epimorphisms in varieties of residuated structures.
\newblock {\em J. Algebra}, 492:185--211, 2017.

\bibitem{BlHoo06}
W.J. Blok and E.~Hoogland.
\newblock {The Beth property in Algebraic Logic}.
\newblock {\em Studia Logica}, 83(1--3):49--90, 2006.

\bibitem{BKP-EDPC-II}
W.J. Blok, P. K{\"o}hler, and D.~Pigozzi.
\newblock On the structure of varieties with equationally definable principal
  congruences {II}.
\newblock {\em Algebra Universalis}, 18:334--379, 1984.

\bibitem{BuSa00}
S.~Burris and H.P. Sankappanavar.
\newblock {\em A {C}ourse in {U}niversal {A}lgebra}.
\newblock The millennium edition, available online, 2012.

\bibitem{Camper18jsl}
M.A.~Campercholi.
\newblock Dominions and primitive positive functions.
\newblock {\em J. Symb. Log.}, 83(1):40--54, 2018.

\bibitem{CamperVagg}
M.A.~Campercholi and D.J. Vaggione.
\newblock Implicit definition of the quaternary discriminator.
\newblock {\em Algebra Universalis}, 68(1):1--16, 2012.
	
\bibitem{CamperVagg-DEF}	
M.A.~Campercholi and D.J. Vaggione.
\newblock Semantical conditions for the definability of functions and relations.
\newblock{\em Algebra Universalis}, 76:71--98, 2016.

\bibitem{CamperVagg-BaPix}	
M.A.~Campercholi and D.J. Vaggione.
\newblock A short proof of the Baker-Pixley theorem for classes.
\newblock{\em Internat. J. Algebra Comput.}, 33(8):1651--1657, 2023.

\bibitem{CaRa17}
M.A. Campercholi and J.G. Raftery.
\newblock Relative congruence formulas and decompositions in quasivarieties.
\newblock {\em Algebra Universalis}, 78(3):407--425, 2017.

\bibitem{Esakia-book85}
L.~Esakia.
\newblock {\em {H}eyting {A}lgebras: {D}uality {T}heory}, volume 50 of {\em Trends in Logic}. 
\newblock Translated from the Russian by A. Evseev. Edited by G. Bezhanishvili and W. Holliday. Springer, 2019.

\bibitem{FK83}
E.~Fried and E.W. Kiss.
\newblock Connections between congruence-lattices and polynomial properties.
\newblock {\em Algebra Universalis}, 17:227--262, 1983.

\bibitem{Go98a}
V.A. Gorbunov.
\newblock {\em Algebraic Theory of Quasivarieties}.
\newblock Siberian School of Algebra and Logic. Consultants Bureau, New York,
  1998.

\bibitem{GramaVaggio96}
H.~Gramaglia and D.J.~Vaggione.
\newblock Birkhoff-like sheaf representations for varieties of lattice
  expansions.
\newblock {\em Studia Logica}, 56:111--131, 1996.

\bibitem{HMTII}
L.~Henkin, J.D. Monk, and A.~Tarski.
\newblock {\em Cylindric Algebras. Part {II}}.
\newblock North-Holland, Amsterdam, 1985.

\bibitem{Ho62}
A.~Horn.
\newblock The separation theorem of intuitionist propositional calculus.
\newblock {\em J. Symb. Log.}, 27:391--399, 1962.

\bibitem{Isbell66epi}
J.R. Isbell.
\newblock {\em Epimorphisms and dominions}, chapter of {P}roceedings of the
  {C}onference on {C}ategorical {A}lgebra, {L}a {J}olla, {California}, 1965,
  pages 232--246.
\newblock Springer, New York, 1966.

\bibitem{KaaPix}
K.~Kaarli and A.F. Pixley.
\newblock {\em Polynomial Completeness in Algebraic Systems}.
\newblock CRC Press, 2001.

\bibitem{KMPT83}
E.W. Kiss, L.~M{\'a}rki, P.~Pr{\"o}hle, and W.~Tholen.
\newblock Categorical algebraic properties. A compendium on amalgamation,
  congruence extension, epimorphisms, residual smallness, and injectivity.
\newblock {\em Studia Sci. Math. Hungar.}, 18:79--141, 1983.

\bibitem{Kh81}
P.~K{\"o}hler.
\newblock Brouwerian semilattices.
\newblock {\em Trans. Amer. Math. Soc.},
  268:103--126, 1981.

\bibitem{KrCl79}
P.H. Krauss and D.M. Clark.
\newblock Global subdirect products.
\newblock {\em Mem. Amer. Math. Soc.}, 17(210), 1979.

\bibitem{Kreisel60JSL}
G.~Kreisel.
\newblock Explicit definability in intuitionistic logic.
\newblock {\em J. Symb. Log.}, 25:389--390, 1960.

\bibitem{Kurtzhals2024}
M.~Kurtzhals.
\newblock Epimorphism Surjectivity in Logic and Algebra.
\newblock Master's thesis, {U}niversity of {B}arcelona, 2024.

\bibitem{Magari1969}
R.~Magari.
\newblock Variet\`a a quozienti filtrali.
\newblock {\em Ann. Univ. Ferrara Sez. VII Sci. Mat.}, 14:5--20, 1969.

\bibitem{Mitschke78}
A.~Mitschke.
\newblock Near unanimity identities and congruence distributivity in equational
  classes.
\newblock {\em Algebra Universalis}, 8:29--32, 1978.

\bibitem{MRJ18sl}
T.~Moraschini, J.G. Raftery, and J.J. Wannenburg.
\newblock Epimorphisms, definability and cardinalities.
\newblock {\em Studia Logica}, 108:255--275, 2020.

\bibitem{MorWan19es}
T.~Moraschini and J.J. Wannenburg.
\newblock Epimorphism surjectivity in varieties of {H}eyting algebras.
\newblock {\em Ann. Pure Appl. Logic}, 171(9), paper no. 102824, 2020.

\bibitem{VaggBP18}
D.J.~Vaggione.
\newblock Infinitary {B}aker-{P}ixley theorem.
\newblock {\em Algebra Universalis}, 79, paper no. 67, 2018.

\bibitem{We78}
H.~Werner.
\newblock {\em Discriminator-algebras}, volume~6 of {\em Studien zur Algebra
  und ihre Anwendungen}.
\newblock Akademie Verlag, Berlin, 1978.

\end{thebibliography}

\end{document}